\numberwithin{equation}{section}
\theoremstyle{plain}
\newtheorem{thm}{Theorem}[section]
\newtheorem{defthm}[thm]{Definition/Theorem}
\newtheorem{prop}[thm]{Proposition}
\newtheorem{defi}[thm]{Definition}
\newtheorem{lem}[thm]{Lemma}
\newtheorem{cor}[thm]{Corollary}
\theoremstyle{remark}
\newtheorem{rema}[thm]{Remark}
\title{Connection coefficients for basic Harish-Chandra series}
\author{Jasper V. Stokman}
\address{KdV Institute for Mathematics, University of Amsterdam,
Science Park 904, 1098 XH Amsterdam, The Netherlands \& IMAPP,
Radboud University Nijmegen, Heyendaalseweg 135, 6525 AJ Nijmegen, The Netherlands.}
\email{j.v.stokman@uva.nl}
\subjclass[2000]{33D52, 33D67}
\begin{document}
\keywords{Multivariable basic  
hypergeometric functions, basic Harish-Chandra series,
basic hypergeometric difference equations, 
quantum Knizhnik-Zamolodchikov equations, connection coefficients}
\begin{abstract}
Basic Harish-Chandra series are asymptotically free meromorphic
solutions of the system of basic hypergeometric
difference equations associated to root systems.
The associated
connection coefficients are explicitly computed in terms of 
Jacobi theta functions. We interpret the connection coefficients 
as the transition functions for asymptotically free meromorphic solutions of 
Cherednik's root system analogs of the quantum 
Knizhnik-Zamolodchikov equations. They thus
give rise to explicit elliptic solutions of root system 
analogs of dynamical Yang-Baxter and reflection equations.
Applications to quantum $c$-functions, basic hypergeometric
functions, reflectionless difference operators and multivariable Baker-Akhiezer
functions are discussed.
\end{abstract}
\maketitle

%%%%%%%%%%%%%%%%%%%%%%%%%%%%%%%%%%%%%%%%%%%%%%%%%%%%
\section{Introduction}\label{Intro}
%%%%%%%%%%%%%%%%%%%%%%%%%%%%%%%%%%%%%%%%%%%%%%%%%%%%
 
The monodromy of the Heckman-Opdam 
system of hypergeometric differential equations associated
to root systems is explicitly computed in \cite{HO}. 
The key step is the derivation of explicit expressions
of the connection coefficients for the Harish-Chandra series solution
of the system
in terms of Gamma functions. We prove the basic hypergeometric
analog of this result by determining explicit expressions of
the connection coefficients for basic 
Harish-Chandra series in terms of Jacobi theta functions.

The basic Harish-Chandra series
is a self-dual, meromorphic solution of the
system of basic hypergeometric difference equations associated to 
root systems, characterized by its plane wave asymptotics
deep in a fixed Weyl chamber. 
The system of basic hypergeometric difference equations is the spectral problem
of the commuting Ruijsenaars-Macdonald-Koorn\-win\-der-Cherednik
difference operators, whose Laurent polynomial
solutions are the celebrated symmetric Macdonald-Koornwinder polynomials. 

The difference Cherednik-Matsuo correspondence
relates the spectral problem 
to Cherednik's 
quantum affine Knizhnik-Zamolodchikov (KZ) equations
associated to minimal principal series representations 
of the affine Hecke algebra, which are root system analogs of
the Frenkel-Reshetikhin-Smirnov quantum KZ equations.
Thus the connection coefficients are transition functions
for asymptotically free meromorphic solutions of quantum KZ equations.
This point of view leads to the interpretation of
the connection matrices as elliptic solutions to root system analogs of
dynamical Yang-Baxter equations.

In the remainder of the introduction we give a detailed description of the
main results, including precise references to the literature.
We start by fixing some basic notations 
in Subsection \ref{idsection}. 
We discuss the two relevant compatible systems of difference equations in
Subsection \ref{IDE}. The basic Harish-Chandra
series are discussed in Subsection \ref{HCintro}. 
In Subsection \ref{connc} we formulate the associated
connection coefficients problem and give the explicit expressions of the
connection coefficients. In Subsection \ref{connc} we also discuss
the relation to modified dynamical Yang-Baxter equations.
Applications to
basic hypergeometric functions and $c$-functions, reflectionless basic
Harish-Chandra series and multivariable Baker-Akhiezer functions are
discussed in Subsections \ref{csection}, \ref{ReflSect} and
\ref{BAsection} respectively.

%%%%%%%%%%%%%%%%%%%%%%%%%%%%%%%%%%%%%%%%%%%%%%%%%%%%%%
\subsection{Initial data}\label{idsection}
%%%%%%%%%%%%%%%%%%%%%%%%%%%%%%%%%%%%%%%%%%%%%%%%%%%%%
We start with the introduction of the initial data of the
Che\-red\-nik-Macdonald theory \cite{CB,M} on Macdonald-Koornwinder polynomials.
The setup follows closely the conventions of the recent exposition
\cite{StB}, which provides
a uniform framework for all the known cases of the theory.

The initial datum 
is given by a triple $(D,\kappa,q)$ 
with $D$ the root system datum, $\kappa$ the 
associated free parameters, and $0<q=e^\tau<1$ the deformation parameter. 
The root system datum $D=(R_0,\Delta_0,\bullet,\Lambda,\widetilde{\Lambda})$ 
consists of
\begin{enumerate}
\item a finite, reduced crystallographic 
root system $R_0$ in the Euclidean space 
$\bigl(E,\bigl(\cdot,\cdot\bigr)\bigr)$, irreducible within the Euclidean
subspace $V$ spanned by $R_0$, 
\item a basis $\Delta_0=(\alpha_1,\ldots,\alpha_n)$ of the root
system $R_0$,
\item $\bullet\in\{u,t\}$ (``u'' standing for 
untwisted and ``t'' for twisted),
\item full lattices $\Lambda,\widetilde{\Lambda}\subseteq E$ such that
\begin{equation*}
\begin{split}
&Q\subseteq\Lambda,\qquad \bigl(\Lambda,Q^\vee\bigr)
\subseteq\mathbb{Z},\\
&\widetilde{Q}\subseteq\widetilde{\Lambda},\qquad \bigl(\widetilde{\Lambda},
\widetilde{Q}^{\vee}
\bigr)\subseteq\mathbb{Z},
\end{split}
\end{equation*}
with $Q$ and $\widetilde{Q}$
(respectively $Q^\vee$ and $\widetilde{Q}^\vee$)
the root lattice (respectively co-root lattice)
of $R_0$ and of the dual root system 
\begin{equation*}
\widetilde{R}_0:=
\begin{cases}
R_0^\vee=\{\alpha^\vee:=\frac{2\alpha}{|\alpha|^2}\}_{\alpha\in R_0}
\quad &\hbox{ if }\,\,\bullet=u,\\
R_0 \quad &\hbox{ if }\,\, \bullet=t
\end{cases}
\end{equation*}
respectively.
\end{enumerate}
We write
\begin{equation*}
\mu_\alpha:=
\begin{cases}
1 \qquad &\hbox{ if }\,\, \bullet=u,\\
\frac{|\alpha|^2}{2} \qquad &\hbox{ if }\,\, \bullet=t
\end{cases}
\end{equation*}
and $\widetilde{\alpha}:=\mu_\alpha\alpha^\vee$ for $\alpha\in R_0$.
Then $\widetilde{R}_0=\{\widetilde{\alpha}\}_{\alpha\in R_0}$.
We write $R_0^+$ and $R_0^-$ for the positive and negative roots in $R_0$
with respect to the basis $\Delta_0$.

We attach to the root system datum $D$ an irreducible
affine root system $R(D)$. It is built from
the reduced affine root system
$R^\bullet:=\{\alpha^{(r)}:=
\mu_\alpha rc+\alpha\}_{r\in\mathbb{Z},\alpha\in R_0}$, where
$\mu_\alpha rc+\alpha$ stands for the affine linear function
$z\mapsto \mu_\alpha r+(\alpha,z)$ on $E$, by adding 
the multiple $2\alpha^{(r)}$ if
$\bigl(\Lambda,\alpha^\vee\bigr)=2\mathbb{Z}$.
A basis of $R(D)$ is obtained by adding to $\Delta_0$ 
the simple affine root $\alpha_0:=\mu_\psi c-\psi$
with $\psi$ the highest (respectively highest short)
root of $R_0$ relative to the basis $\Delta_0$ if 
$\bullet=u$ (respectively $\bullet=t$).

The free parameters $\kappa$ are represented by the function values
$\kappa_a$ ($a\in R(D)$) of a $W$-invariant
function $\kappa: R(D)\rightarrow \mathbb{R}$, where $W:=
W_0\ltimes\widetilde{\Lambda}$ is the extended affine Weyl group.
We call $\kappa$ a multiplicity function and set $\kappa_{2\alpha^{(r)}}:=
\kappa_{\alpha^{(r)}}$ if $2\alpha^{(r)}\not\in R(D)$. With this convention 
the values $\kappa_\alpha,\kappa_{2\alpha},\kappa_{\alpha^{(1)}},
\kappa_{2\alpha^{(1)}}$ ($\alpha\in R_0$) uniquely determine the
multiplicity function $\kappa$. 

Here are three important examples of root system data. The
$\textup{GL}_{n+1}$ root system datum is 
$D=(R_0,\Delta_0,\bullet,\mathbb{Z}^{n+1},
\mathbb{Z}^{n+1})$, where $R_0$ is the root system of type
$A_n$ with its standard realization in $E=\mathbb{R}^{n+1}$.
Cherednik \cite{CB} developed his theory on Macdonald polynomials
mainly for reduced semisimple
root system data, in which case $D=(R_0,\Delta,\bullet,
P,\widetilde{P})$ with $V=E$ and with the lattices taken to be
the weight lattices $P$ and $\widetilde{P}$ of $R_0$
and $\widetilde{R}_0$ respectively. Here semisimple refers to the fact that
$V=E$, reduced to the fact that $R(D)=R^\bullet$ and similarly
for the associated dual affine root system, see Subsection \ref{dars}.
The Koornwinder case of the Macdonald-Koornwinder theory
\cite{Koo,N,Sa,NSt,NSt} corresponds to the root system datum
$(R_0,\Delta_0,t,Q,Q)$ with $R_0$ of type $A_1$ or of type $B_n$
($n\geq 2$).

To clarify the link with well known families of 
one variable basic hypergeometric functions
it is often convenient to express formulas in terms of
Askey-Wilson (AW) type parameters and their duals.
They are defined as follows. For a fixed root $\alpha\in R_0$ 
the associated AW parameters are 
\begin{equation}\label{AW}
\{a_\alpha,b_\alpha,c_\alpha,d_\alpha\}:=
\{q^{\kappa_\alpha+\kappa_{2\alpha}},-q^{\kappa_\alpha-\kappa_{2\alpha}},
q_\alpha q^{\kappa_{\alpha^{(1)}}+\kappa_{2\alpha^{(1)}}},
-q_\alpha q^{\kappa_{\alpha^{(1)}}-\kappa_{2\alpha^{(1)}}}\}
\end{equation}
where $q_\alpha:=q^{\mu_\alpha}$.
The dual AW parameters are 
\begin{equation}\label{dualAW}
\{\widetilde{a}_\alpha,\widetilde{b}_\alpha,\widetilde{c}_\alpha,
\widetilde{d}_\alpha\}=
\{q^{\kappa_\alpha+\kappa_{\alpha^{(1)}}},-q^{\kappa_\alpha-
\kappa_{\alpha^{(1)}}}, q_\alpha q^{\kappa_{2\alpha}+
\kappa_{2\alpha^{(1)}}}, -q_\alpha q^{\kappa_{2\alpha}-
\kappa_{2\alpha^{(1)}}}\}.
\end{equation}
They only depend on the orbit $W_0\alpha$ of
$\alpha\in R_0$. 
The four (dual) AW parameters associated to a root 
$\alpha\in R_0$ comprise either one, two or four of the 
free parameters, reflecting
the fact that the associated local rank one reduction of the 
Macdonald-Koornwinder theory relates to the theory of 
continuous $q$-ultraspherical 
polynomials, continuous $q$-Jacobi polynomials and Askey-Wilson polynomials
respectively. The case at hand can be read off from the root system datum as 
follows.\\

\noindent
{\bf Continuous $q$-ultraspherical case:} 
$\bigl(\Lambda,\alpha^\vee\bigr)=\mathbb{Z}=\bigl(\widetilde{\Lambda},
\widetilde{\alpha}^\vee\bigr)$, then
\[\kappa_{2\alpha^{(1)}}=\kappa_{2\alpha}=\kappa_{\alpha^{(1)}}=\kappa_\alpha.
\]
{\bf Continuous $q$-Jacobi case:}
either $\bigl(\Lambda,\alpha^\vee\bigr)=\mathbb{Z}$ and 
$\bigl(\widetilde{\Lambda},\widetilde{\alpha}^\vee\bigr)=2\mathbb{Z}$, then
\[
\kappa_{2\alpha}=\kappa_\alpha\quad \hbox{ and }\quad
\kappa_{2\alpha^{(1)}}=\kappa_{\alpha^{(1)}},
\]
or $\bigl(\Lambda,\alpha^\vee\bigr)=2\mathbb{Z}$ and 
$\bigl(\widetilde{\Lambda},\widetilde{\alpha}^\vee\bigr)=\mathbb{Z}$, then
\[
\kappa_{\alpha^{(1)}}=\kappa_{\alpha}\quad \hbox{ and }\quad
\kappa_{2\alpha^{(1)}}=\kappa_{2\alpha}.
\]
{\bf Askey-Wilson case:}
$\bigl(\Lambda,\alpha^\vee\bigr)=2\mathbb{Z}=\bigl(\widetilde{\Lambda},
\widetilde{\alpha}^\vee\bigr)$.\\

\noindent
The Askey-Wilson case only occurs when $D$ is the Koornwinder root system
datum $D=(R_0,\Delta_0,t,Q,Q)$ with $R_0$ of type $A_1$ or of type
$B_n$ ($n\geq 2$) and $\alpha\in R_0$ a short root. For reduced semisimple
root system datum, one is dealing with the continuous $q$-ultraspherical case
for all roots $\alpha\in R_0$. Continuous $q$-Jacobi cases only occur
in the untwisted theory $\bullet=u$, see \cite{StB}.

%%%%%%%%%%%%%%%%%%%%%%%%%%%%%%%%%%%%%%%%%%%%%%%%%%%%%%%%%%%%%%%%%%
\subsection{Integrable difference equations}\label{IDE}
%%%%%%%%%%%%%%%%%%%%%%%%%%%%%%%%%%%%%%%%%%%%%%%%%%%%%%%%%%%%%%%%%%%
Consider the trigonometric function
\begin{equation}\label{A(z)}
\begin{split}
A(z):=&\frac{(1-a_\psi q^{\psi(z)})(1-b_\psi q^{\psi(z)})(1-c_\psi q^{\psi(z)})
(1-d_\psi q^{\psi(z)})}{(1-q^{2\psi(z)})(1-q_\psi^2q^{2\psi(z)})}\\
\times&\prod_{\alpha\in R_0^+: (\widetilde{\psi},\widetilde{\alpha}^\vee)
=1}\frac{(1-a_\alpha q^{\alpha(z)})(1-b_\alpha q^{\alpha(z)})}
{(1-q^{2\alpha(z)})}
\end{split}
\end{equation}
in $z\in E_{\mathbb{C}}:=\mathbb{C}\otimes_{\mathbb{R}}E$, where we canonically
extend the (affine) roots to complex
affine linear functions on $E_{\mathbb{C}}$. 
The symmetric Macdonald-Koornwinder polynomials 
associated to the initial
datum $(D,\kappa,q)$ 
are trigonometric Laurent polynomial eigenfunctions of the
difference operator 
\begin{equation*}
\bigl(Lf\bigr)(z):=q^{-(\rho,\widetilde{\psi})}\sum_{w\in W_0/W_{0,\psi}}
A(w^{-1}z)\bigl(f(z+w\widetilde{\psi})-f(z)\bigr)+
\Bigl(\sum_{w\in W_0/W_{0,\psi}}q^{-(\rho,w\widetilde{\psi})}\Bigr)f(z)
\end{equation*}
acting on meromorphic functions $f(z)$ in $z\in E_{\mathbb{C}}$, where
\[
\rho:=\frac{1}{2}\sum_{\alpha\in R_0^+}(\kappa_\alpha+
\kappa_{\alpha^{(1)}})\widetilde{\alpha}^\vee,
\]
$W_0\subseteq\textup{GL}_{\mathbb{C}}(E_{\mathbb{C}})$
is the Weyl group of $R_0$ and
$W_{0,\psi}$ the stabilizer subgroup of $\psi$.

In fact, for the $\textup{GL}_{n+1}$ root system datum,
the difference operator $L$ is a quantum conserved integral of
Ruijsenaars' \cite{R} quantum relativistic integrable many body system.
For reduced semisimple root system datum, $L$ 
is the Macdonald \cite{Mpol} difference operator associated to a
quasi-miniscule weight. For the Koornwinder root system datum, $L$ is the
Koornwinder's \cite{Koo}
multivariable analog of the Askey-Wilson \cite{AW} second order difference
operator.
Higher order difference operators, mutually commuting and commuting with $L$,
have been constructed using the theory of double affine Hecke 
algebras, see, e.g., \cite{CB,M,StB}. We recall their construction in 
Subsection \ref{RMKC}. 
We call $L$ and the associated
higher order difference operators
Ruijsenaars-Macdonald-Koorn\-win\-der-Cherednik (RMKC) 
operators. 

The associated spectral problem is a compatible
system of basic hypergeometric difference equations. It is the
natural generalization of
the Heckman-Opdam \cite{HO} system of hypergeometric differential equations
associated to root systems to the basic hypergeometric level.
It has a natural upgrade to a bispectral problem, see Subsection 
\ref{biRMKC}.

The bispectral quantum KZ equations 
\begin{equation}\label{biqaKZ}
C_{(\tau(\nu),\tau(\lambda))}(z,\xi)f(z-\nu,\xi-\lambda)=f(z,\xi),\qquad
\nu\in\widetilde{\Lambda},\, \lambda\in\Lambda
\end{equation}
associated to the initial datum
$(D,\kappa,q)$ form an explicit compatible system of linear
difference equations
for meromorphic functions $f(z,\xi)$ in $(z,\xi)\in E_{\mathbb{C}}\times
E_{\mathbb{C}}$ taking values in a complex $\#W_0$-dimensional vector
space $\mathcal{V}$. Here $\tau(\nu)$ (respectively $\tau(\lambda)$)
stands for the element $\nu\in\widetilde{\Lambda}$ (respectively
$\lambda\in\Lambda$) viewed as element of 
$W=W_0\ltimes\widetilde{\Lambda}$ (respectively $\widetilde{W}:=
W_0\ltimes\Lambda$). The explicit expressions for
$C_{(\tau(\nu),\tau(\lambda))}(z,\xi)$ are given in Theorem \ref{BqKZ}. 
For the twisted theory $\bullet=t$ with $\widetilde{\Lambda}=\Lambda$, 
the bispectral quantum KZ equations
\eqref{biqaKZ} have been defined and studied before in
\cite{vM,vMS,StSph}. 

For fixed $\xi\in E_{\mathbb{C}}$,
the restricted compatible system of difference equations
\begin{equation}\label{qaKZ}
C_{(\tau(\nu),\tau(0))}(z,\xi)f(z-\nu)=f(z),\qquad \nu\in\widetilde{\Lambda}
\end{equation}
for $\mathcal{V}$-valued meromorphic functions $f(z)$ in $z\in E_{\mathbb{C}}$
are Cherednik's \cite{CQKZ,CQKZ2} quantum affine KZ equations associated
to the minimal principal series representation of the affine Hecke algebra with
central character $q^\xi$, see \cite{vMS,vM} for details
(here $q^\xi$ is interpreted as element of the complex
algebraic torus $\textup{Hom}(\widetilde{\Lambda},\mathbb{C}^*)$ by
$\nu\mapsto q^{(\nu,\xi)}$). 
For the $\textup{GL}_{n+1}$ root system datum,
the quantum affine KZ equations \eqref{qaKZ} become
Frenkel-Reshetikhin-Smirnov \cite{FR,Sm} type quantum KZ equations, which were
derived in \cite{FR} as the consistency
conditions satisfied by matrix coefficients of products of quantum affine
algebra intertwiners. {}From the physics point of view,
they form the consistency conditions of correlation functions 
for integrable two dimensional lattice models from statistal physics.
See \cite{EFK,JM} for detailed expositions.

The Cherednik-Matsuo correspondence \cite{Mat,CInt} relates
solutions of the affine KZ equations to solutions of the Heckman-Opdam system
of hypergeometric differential equations. Its difference 
analog \cite{CQKZ2,CInd,Ka,StI} embeds 
the solution space of the quantum affine 
KZ equations \eqref{qaKZ} into the solution space of a
spectral problem of the RMKC operators. See Subsection
\ref{CMK} for the definition of the associated embedding $\chi$.
The difference Cherednik-Matsuo correspondence
was obtained in \cite[Thm. 3.4(a)]{CQKZ2} for reduced
semisimple root datum (untwisted case), see also \cite{CInd}.
Subsequently Kato \cite[Thm. 4.6]{Ka} showed 
that $\chi$ maps solutions of the quantum affine KZ equations \eqref{qaKZ} 
to eigenfunctions of $L$ by different methods. The surjectivity of $\chi$ 
was claimed in \cite[Thm. 3.4(b)]{CQKZ2} and \cite[Thm. 4.3(b)]{CInd}.
It was proved for generic spectral parameter $\xi\in E_{\mathbb{C}}$
in \cite[Thm. 5.16(b)]{StI} 
using an extension of the methods from \cite{CInt,O}
for the classical Cherednik-Matsuo correspondence.

In Subsection \ref{CMK} we show that the difference Cherednik-Matsuo
correspondence gives rise to an embedding of the 
solution space of the bispectral quantum KZ equations to the 
solution space of a bispectral problem of the RMKC operators.
This extends the results from \cite{vM,vMS,StSph}, which dealt
with the twisted case.

%%%%%%%%%%%%%%%%%%%%%%%%%%%%%%%%%%%%%%%%%%%%%%%%%%%%%%%%%%%%%%%%%%%%%%
\subsection{Basic Harish-Chandra series}\label{HCintro}
%%%%%%%%%%%%%%%%%%%%%%%%%%%%%%%%%%%%%%%%%%%%%%%%%%%%%%%%%%%%%%%%%%%%%%

Extending the results from \cite{vMS,vM,StSph}, we prove in
Subsection \ref{SOLsection} the existence of an asymptotically free, meromorphic
solution $\Phi_{KZ}$ of the bispectral quantum KZ equations \eqref{biqaKZ} and
establish its basic properties (selfduality, description of
singularities). Via the difference Cherednik-Matsuo correspondence it 
leads to the existence of asymptotically
free meromorphic eigenfunctions of the RMKC operator $L$.
More precisely, we will establish the following result.
%%%%%%%%%%%%%%%%%%%%%%%%%%%%%%%%%%%%%%%%%%%%%%%%%%%%%%%%%%%%%%%%%%%%%%%%%%
\begin{thm}\label{THM1}
There exists a unique meromorphic function 
$\Phi(\cdot,\cdot)=\Phi(\cdot,\cdot;D,\kappa;q)$ on $E_{\mathbb{C}}\times 
E_{\mathbb{C}}$ satisfying
\begin{enumerate}
\item the eigenvalue equations
\begin{equation}\label{EE}
L\Phi(\cdot,\xi)=
\Bigl(\sum_{w\in W_0/W_{0,\psi}}q^{\psi(w^{-1}\xi)}\Bigr)\Phi(\cdot,\xi),
\end{equation}
viewed as identity of meromorphic functions in $(\cdot,\xi)\in
E_{\mathbb{C}}\times E_{\mathbb{C}}$ (the unspecified first entry is to
emphasize that this is the space on which the RMKC operator $L$ is 
acting),
\item the asymptotic expansion 
\[
\Phi(z,\xi)=\frac{\mathcal{W}(z,\xi)}
{\mathcal{S}(z)\widetilde{\mathcal{S}}(\xi)}
\sum_{\alpha\in Q^+}
\Gamma_\alpha(\xi)q^{-\alpha(z)},\qquad Q^+:=\bigoplus_{i=1}^n\mathbb{Z}_{\geq 0}
\alpha_i
\]
with 
\begin{enumerate}
\item the plane wave $\mathcal{W}(z,\xi)=q^{(\rho-\xi,\widetilde{\rho}+w_0z)}$,
where $w_0\in W_0$ is the longest Weyl group element and 
\[\widetilde{\rho}:=
\frac{1}{2}\sum_{\alpha\in R_0^+}\bigl(\kappa_\alpha+\kappa_{2\alpha}\bigr)
\alpha^\vee
\]
is a dual version of $\rho$,
\item the series $\Psi(z,\xi):=
\sum_{\alpha\in Q^+}\Gamma_\alpha(\xi)q^{-\alpha(z)}$ converging
normally for $(z,\xi)$ in compacta of $E_{\mathbb{C}}\times E_{\mathbb{C}}$ (hence
it defines 
a holomorphic function in $(z,\xi)\in E_{\mathbb{C}}\times E_{\mathbb{C}}$),
\item the holomorphic functions $\mathcal{S}(\cdot)$ and 
$\widetilde{\mathcal{S}}(\cdot)$ on $E_{\mathbb{C}}$,
capturing the singularities of $\Phi(\cdot,\cdot)$, explicitly given by
\begin{equation*}
\begin{split}
\mathcal{S}(z)&:=\prod_{\alpha\in R_0^+}\bigl(q_\alpha^2a_\alpha^{-1}
q^{-\alpha(z)},q_\alpha^2b_\alpha^{-1}q^{-\alpha(z)},
q_\alpha^2c_\alpha^{-1}q^{-\alpha(z)},q_\alpha^2d_\alpha^{-1}q^{-\alpha(z)};
q_\alpha^2\bigr)_{\infty},\\
\widetilde{\mathcal{S}}(\xi)&:=\prod_{\alpha\in R_0^+}\bigl(q_\alpha^2
\widetilde{a}_\alpha^{-1}
q^{-\widetilde{\alpha}(\xi)},q_\alpha^2\widetilde{b}_\alpha^{-1}
q^{-\widetilde{\alpha}(\xi)},
q_\alpha^2\widetilde{c}_\alpha^{-1}q^{-\widetilde{\alpha}(\xi)},
q_\alpha^2\widetilde{d}_\alpha^{-1}q^{-\widetilde{\alpha}(\xi)};
q_\alpha^2\bigr)_{\infty},
\end{split}
\end{equation*}
where 
\[
\bigl(x_1,\ldots,x_m;q\bigr)_{\infty}:=\prod_{j=1}^m\prod_{i=0}^{\infty}
(1-q^ix_j),
\]
\item the normalization 
\[
\Gamma_0(\xi)=\prod_{\alpha\in R_0^+}\bigl(q_\alpha^2q^{-2\widetilde{\alpha}(\xi)};
q_\alpha^2\bigr)_{\infty}.
\]
\end{enumerate}
\end{enumerate}
\end{thm}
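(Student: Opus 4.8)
The plan is to construct $\Phi$ via a formal power series ansatz, prove convergence, and then deduce uniqueness from the structure of the recursion. Concretely, write the unknown function in the form
\[
\Phi(z,\xi)=\frac{\mathcal{W}(z,\xi)}{\mathcal{S}(z)\widetilde{\mathcal{S}}(\xi)}\,\Psi(z,\xi),\qquad
\Psi(z,\xi)=\sum_{\alpha\in Q^+}\Gamma_\alpha(\xi)q^{-\alpha(z)},
\]
and substitute into the eigenvalue equation \eqref{EE}. Since $L$ is a $W_0$-invariant difference operator with coefficients built from the trigonometric function $A$, and since conjugation by the plane wave $\mathcal{W}(\cdot,\xi)$ turns the translations $z\mapsto z+w\widetilde\psi$ into multiplication by $q^{\psi(w^{-1}\xi)}$ up to corrections, the eigenvalue equation becomes, after clearing denominators, a difference equation for $\Psi$ of the schematic shape
\[
\bigl(\widehat{L}_\xi\Psi\bigr)(z,\xi)=\Bigl(\sum_{w\in W_0/W_{0,\psi}}q^{\psi(w^{-1}\xi)}\Bigr)\Psi(z,\xi),
\]
where $\widehat{L}_\xi$ is the plane-wave conjugate of $L$. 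Expanding everything as a (convergent, for $\mathrm{Re}\,\alpha(z)\to+\infty$ in the chamber) power series in the exponentials $q^{-\alpha_i(z)}$, and extracting the coefficient of $q^{-\alpha(z)}$, one obtains a recursion
\[
\bigl(c(\xi+\alpha)-c(\xi)\bigr)\,\Gamma_\alpha(\xi)=\sum_{\beta\in Q^+,\,\beta\neq\alpha,\,\beta\leq\alpha}\! K_{\alpha,\beta}(\xi)\,\Gamma_\beta(\xi),
\]
where $c(\xi):=\sum_{w\in W_0/W_{0,\psi}}q^{\psi(w^{-1}\xi)}$ is the eigenvalue and the $K_{\alpha,\beta}$ are explicit rational functions in $q^{\widetilde\alpha(\xi)}$ coming from the Taylor coefficients of $A$ and of $\mathcal{S}$. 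Given $\Gamma_0$ as prescribed, this determines every $\Gamma_\alpha$ uniquely as a rational function of $q^\xi$, provided the "denominators" $c(\xi+\alpha)-c(\xi)$ do not vanish identically — which they do not, since $\psi$ is a nonzero weight, so $c(\xi+\alpha)-c(\xi)$ is a nonzero exponential polynomial for every $0\neq\alpha\in Q^+$. The specific shape of $\mathcal{S}(z)$ is exactly what is needed for the recursion to close in $Q^+$ (it absorbs the pole structure that would otherwise force half-integer shifts into the lattice), and the specific $\Gamma_0(\xi)$ is the natural normalization making the self-dual symmetry manifest.

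The key steps, in order, are: (i) compute the plane-wave conjugate $\widehat L_\xi$ of $L$ explicitly and verify that its "leading term" (the part surviving as $\mathrm{Re}\,\alpha_i(z)\to\infty$) acts on $q^{-\alpha(z)}$ by the scalar $c(\xi+\alpha)$ — this is the standard Harish-Chandra-type computation, using that $A(w^{-1}z)\to$ a limit as we move deep into the chamber; (ii) derive the triangular recursion above and check that $\Gamma_\alpha(\xi)$ is determined, with poles only where $c(\xi+\beta)=c(\xi+\gamma)$ for various $\beta,\gamma\leq\alpha$ — these are the locus where $\widetilde{\mathcal S}(\xi)$ (together with the normalization) has zeros, so $\mathcal{W}\Psi/(\mathcal{S}\widetilde{\mathcal S})$ is genuinely meromorphic on all of $E_{\mathbb C}\times E_{\mathbb C}$; (iii) prove normal convergence of $\sum_\alpha\Gamma_\alpha(\xi)q^{-\alpha(z)}$ on compacta by a majorant/estimate argument: bound $|K_{\alpha,\beta}(\xi)|$ and $|c(\xi+\alpha)-c(\xi)|^{-1}$ uniformly for $\xi$ in a compact set avoiding the (locally finite) bad locus, and run the recursion against a geometric majorant series $\sum C^{|\alpha|}q^{-\alpha(z)}$, exactly as in the classical Heckman-Opdam case \cite{HO} and its basic analog; (iv) assemble the pieces and invoke analytic continuation in $\xi$ to remove the genericity assumption, then establish uniqueness: any meromorphic solution with the prescribed asymptotic expansion must have coefficients satisfying the same recursion with the same $\Gamma_0$, hence coincides with $\Phi$.

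The main obstacle I expect is step (iii), the convergence estimate, because the denominators $c(\xi+\alpha)-c(\xi)$ can come arbitrarily close to zero as $|\alpha|\to\infty$ for fixed $\xi$ (small divisors), so a naive geometric majorant will not immediately work; one needs a Diophantine-type lower bound of the form $|c(\xi+\alpha)-c(\xi)|\geq\varepsilon\,q^{\delta|\alpha|}$ away from the singular locus, or else a more careful argument grouping terms by the value of the eigenvalue, to beat the possible growth of the $K_{\alpha,\beta}$. In the present basic (as opposed to classical differential) setting this is actually somewhat easier than in \cite{HO}: the "denominator" is $\sum_w q^{\psi(w^{-1}(\xi+\alpha))}-\sum_w q^{\psi(w^{-1}\xi)}$, and for $\alpha\in Q^+$ deep in the cone the dominant term $q^{\psi(w_0^{-1}(\xi+\alpha))}$ (or its analogue for the relevant coset) grows exponentially in $|\alpha|$, so after factoring it out the remaining quotient is bounded and the majorant argument goes through — I would make this precise by fixing a $W_0$-invariant inner product norm, comparing $\psi(w^{-1}\alpha)$ across cosets, and showing the minimum over cosets of the real part still grows linearly in $|\alpha|$ outside a thin exceptional set in $\xi$. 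A secondary, more bookkeeping-heavy point is to verify that the zeros of $\mathcal S(z)$ and $\widetilde{\mathcal S}(\xi)$ chosen in the statement precisely match the poles generated by the recursion, i.e.\ that no spurious singularities are introduced and none are missed — this is a direct but lengthy comparison of the $q$-shifted factorials in $\mathcal S$, $\widetilde{\mathcal S}$ with the zero locus of the operator $c(\xi+\alpha)-c(\xi)$ and the pole locus of the coefficients of $A$.
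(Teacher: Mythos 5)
Your route is genuinely different from the paper's: you attack the single eigenvalue equation \eqref{EE} directly by a Frobenius-type ansatz and recursion, whereas the paper first constructs an asymptotically free solution $\Phi_{KZ}$ of the bispectral quantum KZ equations (Theorem \ref{firststep}, via Birkhoff-type asymptotic methods for the first-order system) and then sets $\Phi:=\chi(\Phi_{KZ})$ using the difference Cherednik--Matsuo correspondence. Your uniqueness argument coincides with the paper's (which invokes the recursion from \cite{CInd} and \cite[\S 4]{LS}: the equations \eqref{EE} determine all $\Gamma_\alpha$ from $\Gamma_0$), and your resolution of the small-divisor worry is sound for \emph{generic fixed} $\xi$ -- the coefficient of $q^{(w_0\widetilde\psi,\xi)}$ in the denominator $c(\xi)-c(\xi+w_0\alpha)$ grows like $q^{(\widetilde\psi,w_0\alpha)}$, i.e.\ exponentially in $|\alpha|$, so the majorant closes; this is essentially \cite[Thm.\ 4.4]{LS}.

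The genuine gap is in statement (2b)--(2c): you need $\Psi(z,\xi)=\sum_\alpha\Gamma_\alpha(\xi)q^{-\alpha(z)}$ to converge normally on compacta of \emph{all} of $E_{\mathbb{C}}\times E_{\mathbb{C}}$, with each $\Gamma_\alpha$ entire, so that the prescribed product $\mathcal{S}(z)\widetilde{\mathcal{S}}(\xi)$ captures every singularity of $\Phi$. The recursion alone cannot deliver this. Its denominators $c(\xi)-c(\xi+w_0\alpha)=\sum_{w}(1-q^{(w\widetilde\psi,w_0\alpha)})q^{(w\widetilde\psi,\xi)}$ vanish, as $\alpha$ ranges over $Q^+\setminus\{0\}$, on a countable union of hypersurfaces that in rank $\geq 2$ is \emph{not} contained in the zero locus of $\widetilde{\mathcal{S}}(\xi)$ or of the normalization $\Gamma_0(\xi)$ (those zero loci are unions of "affine hyperplanes" $q^{\widetilde\alpha(\xi)}\in\text{const}\cdot q_\alpha^{2\mathbb{Z}}$, while the resonance variety of the single operator $L$ does not factor that way). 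Knowing inductively that $\Gamma_{\alpha-\beta}$ is entire makes the numerator entire but gives no reason for it to vanish on the zero set of the denominator, so the claimed holomorphy is a nontrivial cancellation that must be \emph{proved}, not observed; your step (ii) asserts it and your closing paragraph files it under "bookkeeping", which understates the problem. This is precisely the point where the paper's detour pays off: the quantum KZ equations are a first-order compatible system with explicitly known rational coefficient matrices $C_{(\tau(\nu),\tau(\lambda))}$, so the poles introduced by each transport step can be tracked, and the duality symmetry $j$ of Remark \ref{DS} forces the symmetric factorized form $\mathcal{S}(z)\widetilde{\mathcal{S}}(\xi)$ (Proposition \ref{propertiesKZ}(i)--(ii)); the explicit value of $\Gamma_0$ then comes from a limit computation inside the KZ system (Proposition \ref{propertiesKZ}(iii)). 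To complete your direct approach you would need either this input, or the full commuting family of RMKC operators (whose joint resonance locus is much smaller than that of $L$ alone), or an independent argument for the cancellations; as written, your proof establishes existence only as a meromorphic-in-$\xi$ family for generic $\xi$, which is weaker than the theorem.
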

%%%%%%%%%%%%%%%%%%%%%%%%%%%%%%%%%%%%%%%%%%%%%%%%%%%%%%%%%%%%%%%%%%%%%%%%
The function $\Phi(\cdot,\cdot)$ is the natural generalization to the
present basic hypergeometric context 
of Harish-Chandra series, see \cite{HO} and  \cite[Part 1]{HS}.
We therefore call $\Phi(\cdot,\cdot)$ the 
{\it basic Harish-Chandra series} associated to the initial datum 
$(D,\kappa,q)$. It is automatically an eigenfunction of the higher
order RMKC operators, see Subsection \ref{HCsub}.
Specializing $\xi$ to a polynomial spectral point
turns $\Phi(z,\xi)$ into the selfdual symmetric Macdonald-Koornwinder
polynomial associated to $(D,\kappa,q)$ (the proof from \cite[\S 4]{HS} and
\cite[\S 3.3]{StSph} 
generalizes easily to the present setup, with the Macdonald-Koornwinder
polynomials associated to $(D,\kappa,q)$ as defined in \cite{StB}).
%%%%%%%%%%%%%%%%%%%%%%%%%%%%%%%%%%%%%%%%%%%%%%%%%%%%%%%%%%%%%%%%%%%%%%%%
\begin{rema}
If $R_0$ is of rank one then explicit expressions of the basic Harish-Chandra
series in terms of basic hypergeometric series are known, see
\cite[\S 5]{StSph}. For $R_0$ of rank two explicit basic hypergeometric
expressions are known only for the $\textup{GL}_{3}$ root system
datum, see \cite{NS}. Explicit 
expressions for the coefficients
$\Gamma_\alpha(\xi)$ ($\alpha\in Q^+$) of the power series expansion of
$\Phi(z,\xi)$ are only known in higher rank cases if $D$ is 
the $\textup{GL}_{n+1}$ system datum, see \cite{NS}.
\end{rema}
%%%%%%%%%%%%%%%%%%%%%%%%%%%%%%%%%%%%%%%%%%%%%%%%%%%%%%%%%%%%%%%%%%%%%%%%%

The above characterization of the basic Harish-Chandra series is 
easy to establish. Its existence was proved for $\bullet=t$
and $\Lambda=\widetilde{\Lambda}$ in
\cite{vMS,vM,StSph}. These methods are extended to the present context
in Section \ref{bHCs}.

The particular choice of normalization of the basic Harish-Chandra series
(see Theorem \ref{THM1}(2d)) 
is to ensure the selfduality of the basic Harish-Chandra series:
%%%%%%%%%%%%%%%%%%%%%%%%%%%%%%%%%%%%%%%%%%%%%%%%%%%%%%%%%%%%%%%%%%%%%%%%%%
\begin{thm}\label{THM2}
Let $\widetilde{\Phi}(\cdot,\cdot)$
be the basic Harish-Chandra series associated to the dual initial datum
$(\widetilde{D},\widetilde{\kappa},q)$, where $\widetilde{D}:=
(\widetilde{R}_0,\widetilde{\Delta}_0,\bullet,\widetilde{\Lambda},\Lambda)$
with $\widetilde{\Delta}_0:=(\widetilde{\alpha}_1,\ldots,\widetilde{\alpha}_n)$
and with $\widetilde{\kappa}$ the dual set of free parameters
as defined in Subsection \ref{mf} (its associated AW parameters are the
dual AW parameters). Then
\[
\Phi(z,\xi)=\widetilde{\Phi}(\xi,z).
\]
\end{thm}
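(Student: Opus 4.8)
The plan is to prove the selfduality statement $\Phi(z,\xi)=\widetilde\Phi(\xi,z)$ by exhibiting the right-hand side as a meromorphic function satisfying the four characterizing conditions of Theorem~\ref{THM1} for the initial datum $(D,\kappa,q)$, and then invoking the uniqueness clause of that theorem. The key observation is that dualizing the root system datum, $D\mapsto\widetilde D=(\widetilde R_0,\widetilde\Delta_0,\bullet,\widetilde\Lambda,\Lambda)$, is an involution: $\widetilde{\widetilde D}=D$ (since $\widetilde{\widetilde R}_0=R_0$ in both the $u$ and $t$ cases, using $\widetilde\alpha=\mu_\alpha\alpha^\vee$ and the interchange of $\Lambda$ with $\widetilde\Lambda$), and correspondingly the dualization of free parameters $\kappa\mapsto\widetilde\kappa$ is an involution, which on the level of AW parameters interchanges $\{a_\alpha,b_\alpha,c_\alpha,d_\alpha\}$ with $\{\widetilde a_\alpha,\widetilde b_\alpha,\widetilde c_\alpha,\widetilde d_\alpha\}$ --- this is visible already from comparing \eqref{AW} and \eqref{dualAW}. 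Likewise $\rho$ and $\widetilde\rho$, and $\mathcal S$ and $\widetilde{\mathcal S}$, swap roles under dualization. So all the ingredients appearing in the asymptotic expansion of Theorem~\ref{THM1}(2) are manifestly compatible with the substitution $z\leftrightarrow\xi$, $D\leftrightarrow\widetilde D$; the only genuine content is in verifying the eigenvalue equation.

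First I would set $F(z,\xi):=\widetilde\Phi(\xi,z)$ and check condition (2) of Theorem~\ref{THM1} directly. Writing out $\widetilde\Phi(\xi,z)$ using its defining asymptotic expansion for $(\widetilde D,\widetilde\kappa,q)$, the plane wave $\widetilde{\mathcal W}(\xi,z)=q^{(\widetilde\rho-z,\rho+w_0\xi)}$ equals $\mathcal W(z,\xi)$ up to the symmetric factor $q^{(\rho,\widetilde\rho)}$-type bilinear terms; one checks $(\widetilde\rho-z,\rho+w_0\xi)=(\rho-\xi,\widetilde\rho+w_0 z)$ using $w_0$-invariance of the inner product and $w_0^2=1$, so the plane waves agree on the nose. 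The denominators $\mathcal S(z)\widetilde{\mathcal S}(\xi)$ are symmetric in the pair (the $\widetilde{}$-version of $\widetilde{\mathcal S}$ is $\mathcal S$ and vice versa), the normalization $\widetilde\Gamma_0(z)$ for the dual datum becomes $\prod_{\alpha}(q_\alpha^2 q^{-2\alpha(z)};q_\alpha^2)_\infty$, matching Theorem~\ref{THM1}(2d) after the swap, and normal convergence of $\widetilde\Psi(\xi,z)$ on compacta is just the dual instance of Theorem~\ref{THM1}(2b). Thus $F$ has the correct form; it remains to produce the eigenvalue equation \eqref{EE}.

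The eigenvalue equation is where the real argument sits, and I expect this to be the main obstacle. By Theorem~\ref{THM1} applied to $(\widetilde D,\widetilde\kappa,q)$, the function $\widetilde\Phi(\cdot,z)$ satisfies $\widetilde L\,\widetilde\Phi(\cdot,z)=\bigl(\sum_{w}q^{\widetilde\psi(w^{-1}z)}\bigr)\widetilde\Phi(\cdot,z)$, where $\widetilde L$ is the RMKC operator for the dual datum acting in the first ($\xi$) variable; what I need instead is that $\widetilde\Phi(\xi,\cdot)=F(\cdot,\xi)$ is an eigenfunction of the original $L$ acting in the $z$ variable, with eigenvalue $\sum_w q^{\psi(w^{-1}\xi)}$. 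This is precisely a bispectral duality statement for the basic Harish-Chandra series: the operator $L$ in $z$ and the operator $\widetilde L$ in $\xi$ act on $\Phi(z,\xi)$ with interchanged roles of eigenvalue and spectral variable. The natural route is to establish this bispectrality at the level of the quantum KZ solution $\Phi_{KZ}$ from Subsection~\ref{SOLsection} --- whose selfduality is built in by construction of the bispectral quantum KZ system \eqref{biqaKZ}, since the cocycle $C_{(\tau(\nu),\tau(\lambda))}(z,\xi)$ is designed symmetrically in the two sets of variables --- and then transport it through the difference Cherednik-Matsuo correspondence $\chi$ of Subsection~\ref{CMK}, using that $\chi$ intertwines the $\widetilde\Lambda$-part of the bispectral quantum KZ equations with the RMKC spectral problem and the $\Lambda$-part with the dual RMKC spectral problem. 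Concretely, I would show that $\Phi(z,\xi)$ arising from $\Phi_{KZ}$ via $\chi$ already satisfies \emph{both} families of eigenvalue equations by the bispectral compatibility of \eqref{biqaKZ}, so the dual eigenvalue equation for $F$ is automatic; then uniqueness forces $F=\Phi$. The delicate points are (i) matching normalizations so that the $\chi$-image of $\Phi_{KZ}$ is exactly the normalized $\Phi$ of Theorem~\ref{THM1} rather than a scalar multiple, which is handled by the explicit choice of $\Gamma_0(\xi)$ in Theorem~\ref{THM1}(2d) together with the corresponding leading-coefficient computation for $\Phi_{KZ}$, and (ii) ensuring the meromorphic continuation and the genericity hypotheses under which $\chi$ is bijective do not obstruct the identity, which follows because both sides are meromorphic in $(z,\xi)\in E_{\mathbb C}\times E_{\mathbb C}$ and agree on a Zariski-dense set of $\xi$.
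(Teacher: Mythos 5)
Your proposal is correct and, in its operative part, coincides with the paper's proof: the paper disposes of Theorem \ref{THM2} in one line by citing the selfduality of $\Phi_{KZ}$ (Proposition \ref{propertiesKZ}{\bf (i)}, which comes from the duality symmetry $j$ of Remark \ref{DS} together with the uniqueness of the asymptotically free qKZ solution) and then pushing it through the $W_0\times W_0$-equivariant map $\chi$ --- exactly the route your final paragraph describes. One caveat on your ``direct verification'' of condition (2): the dual instance of Theorem \ref{THM1}(2d) gives the leading coefficient of $\widetilde{\Phi}(\xi,z)$ in its expansion in powers of $q^{-\widetilde{\alpha}(\xi)}$ (namely $\prod_\alpha(q_\alpha^2q^{-2\alpha(z)};q_\alpha^2)_\infty$, a function of $z$), whereas what condition (2d) for $(D,\kappa,q)$ demands of $F(z,\xi)=\widetilde{\Phi}(\xi,z)$ is the leading coefficient of the expansion in powers of $q^{-\alpha(z)}$; these do not ``match after the swap'' for free --- that match is essentially the nontrivial content of Proposition \ref{propertiesKZ}{\bf (ii)}--{\bf (iii)} (the double power series and the computation of $\Gamma_0^{KZ}$), without which the uniqueness argument only yields $F(z,\xi)=c(\xi)\Phi(z,\xi)$. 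Since you ultimately fall back on the qKZ-level duality anyway, this does not break the proof, but the second paragraph as written overstates what follows ``manifestly'' from swapping $z\leftrightarrow\xi$ in the dual characterization.
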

%%%%%%%%%%%%%%%%%%%%%%%%%%%%%%%%%%%%%%%%%%%%%%%%%%%%%%%%%%%%%%%%%%%%%%%%
The selfduality of $\Phi$ implies that $\Phi$ solves a
bispectral problem, 
see Subsection \ref{HCsub}. We prove Theorem \ref{THM1} and Theorem \ref{THM2} in Subsection \ref{HCsub}. 

%%%%%%%%%%%%%%%%%%%%%%%%%%%%%%%%%%%%%%%%%%%%%%%%%%%%%%%%%%%%%%%%%%%%%%%%
\begin{rema}
The Harish-Chandra series is not selfdual, but it
does solve a bi\-spectral problem. The associated bispectral problem is the bispectral extension of
the Heckman-Opdam system of hypergeometric differential equations by eigenvalue equations for the rational degenerations
of dual RMKC operators  (see \cite[Thm. 6.9]{ChBi} and the subsequent remark, and \cite[Thm. 6.12]{ChBi}).
\end{rema}
%%%%%%%%%%%%%%%%%%%%%%%%%%%%%%%%%%%%%%%%%%%%%%%%%%%%%%%%%%%%%%%%%%%%%%

%%%%%%%%%%%%%%%%%%%%%%%%%%%%%%%%%%%%%%%%%%%%%%%%%%%%%%%%%%%%%%%%%%%%%%%%%%
\subsection{Connection coefficients and root system analogs of
 elliptic $R$-matrices}
\label{connc}
%%%%%%%%%%%%%%%%%%%%%%%%%%%%%%%%%%%%%%%%%%%%%%%%%%%%%%%%%%%%%%%%%%%%%%%%%
The RMKC operators are $W_0$-equivariant, resulting
in the $W_0\times W_0$-invariance of the solution space 
of the associated bispectral problem. It leads to the following
definition of connection matrices.

%%%%%%%%%%%%%%%%%%%%%%%%%%%%%%%%%%%%%%%%%%%%%%%%%%%%%%%%%%%%%%%%%%%%%%%
\begin{defthm}\label{THM3}
Let $\mathcal{F}$ be the space of meromorphic 
$\widetilde{\Lambda}\times\Lambda$-translation invariant
meromorphic functions $f(z,\xi)$ in
$(z,\xi)\in E_{\mathbb{C}}\times E_{\mathbb{C}}$.
For $\sigma\in W_0$ there exists a unique matrix
\[
M^\sigma(z,\xi)=\bigl(m_{\tau_1,\tau_2}^\sigma(z,\xi)\bigr)_{\tau_1,\tau_2\in W_0}
\]
with coefficients $m_{\tau_1,\tau_2}^\sigma$ in $\mathcal{F}$ such that
\[
\Phi(\sigma^{-1}z,\tau_2^{-1}\xi)=\sum_{\tau_1\in W_0}m_{\tau_1,\tau_2}^\sigma(z,\xi)
\Phi(z,\tau_1^{-1}\xi)
\]
as meromorphic functions in $(z,\xi)\in E_{\mathbb{C}}\times E_{\mathbb{C}}$.
We call $M^\sigma$ the connection matrix associated to $\sigma\in W_0$
and $M:=\{M^\sigma\}_{\sigma\in W_0}$ the connection cocycle.
\end{defthm}
%%%%%%%%%%%%%%%%%%%%%%%%%%%%%%%%%%%%%%%%%%%%%%%%%%%%%%%%%%%%%%%%%%%%%%
For fixed $\xi\in E_{\mathbb{C}}$
the connection cocycle is Cherednik's monodromy cocycle 
\cite[Cor. 5.3]{CQKZ} (see also \cite[\S 4]{CInd}) for the quantum
affine KZ equations \eqref{qaKZ}, 
represented as matrix with respect to
a suitable basis of asymptotically free solutions.
We will first establish the theorem in the context of the bispectral
quantum KZ equations. Applying the difference Cherednik-Matsuo 
correspondence then 
provides the current formulation in terms of basic Harish-Chandra series.
See Subsections \ref{CMK} and \ref{HCsub} for the details. 

The cocycle property of the connection cocycle is
\begin{equation}\label{CP}
M^{\sigma_1\sigma_2}(z,\xi)=M^{\sigma_1}(z,\xi)M^{\sigma_2}(\sigma_1^{-1}z,\xi),
\qquad \sigma_1,\sigma_2\in W_0
\end{equation}
with $M^e$ the identity matrix, where $e\in W_0$ is the neutral element.
Note furthermore that 
$m_{\tau_1,\tau_2}^\sigma(z,\xi)=m_{\tau_2^{-1}\tau_1,e}^\sigma(z,\tau_2^{-1}\xi)$.

The 
following theorem provides 
explicit expressions of the entries of the connection
matrices in terms of theta functions.
Write
\[
\theta(x_1,\ldots,x_m;q):=\prod_{j=1}^m\bigl(x_j,q/x_j;q\bigr)_{\infty}
\]
for products of the normalized Jacobi theta function $\theta(x;q)$
in base $q$ and define for $\alpha\in R_0$ the following two meromorphic
functions in $(x,y)\in\mathbb{C}\times\mathbb{C}$,
\[
\mathfrak{e}_\alpha(x,y):=q^{-\frac{1}{2\mu_\alpha}
(\kappa_\alpha+\kappa_{2\alpha}-x)(\kappa_\alpha+\kappa_{\alpha^{(1)}}-y)}
\frac{\theta\bigl(\widetilde{a}_\alpha q^y, \widetilde{b}_\alpha q^y,
\widetilde{c}_\alpha q^y,d_\alpha q^{y-x}/\widetilde{a}_\alpha;
q_\alpha^2\bigr)}
{\theta\bigl(q^{2y},d_\alpha q^{-x};q_\alpha^2\bigr)}
\]
and its dual version
\[
\widetilde{\mathfrak{e}}_\alpha(x,y):=q^{-\frac{1}{2\mu_\alpha}
(\kappa_\alpha+\kappa_{\alpha^{(1)}}-x)(\kappa_\alpha+\kappa_{2\alpha}-y)}
\frac{\theta\bigl(a_\alpha q^y, b_\alpha q^y,
c_\alpha q^y,\widetilde{d}_\alpha q^{y-x}/a_\alpha;
q_\alpha^2\bigr)}
{\theta\bigl(q^{2y},\widetilde{d}_\alpha q^{-x};q_\alpha^2\bigr)}.
\]
The meromorphic functions $\mathfrak{e}_\alpha(x,y)$
and $\widetilde{\mathfrak{e}}_\alpha(x,y)$ only depend on $W_0\alpha$.
Furthermore, $\mathfrak{e}_\alpha(x,y)$ and
$\widetilde{\mathfrak{e}}_\alpha(x,y)$ 
are $2\mu_\alpha$-translation invariant in both $x$ and $y$,
which follows from repeated application of
the functional equation 
\begin{equation}\label{fe}
\theta(q^rx;q)=
(-q^{-\frac{1}{2}}x)^{-r}q^{-\frac{r^2}{2}}\theta(x;q),\qquad
r\in\mathbb{Z}.
\end{equation}
For $i\in\{1,\ldots,n\}$ let $s_i\in W_0$ be 
the orthogonal reflection associated to the simple root $\alpha_i$. 
%%%%%%%%%%%%%%%%%%%%%%%%%%%%%%%%%%%%%%%%%%%%%%%%%%%%%%%%%%%%%%%%%%%%%%
\begin{thm}\label{THM4}
Fix $i\in\{1,\ldots,n\}$. Let $i^*\in\{1,\ldots,n\}$ such
that $\alpha_{i^*}=-w_0\alpha_i$. Then
$m_{\tau_1,\tau_2}^{s_i}\equiv 0$ if $\tau_1\not\in\{\tau_2,\tau_2s_{i^*}\}$ and
\begin{equation}\label{explicitm}
\begin{split}
m_{e,e}^{s_i}(z,\xi)&=\frac{\mathfrak{e}_{\alpha_i}(\alpha_i(z),
\widetilde{\alpha}_{i^*}(\xi))-\widetilde{\mathfrak{e}}_{\alpha_i}
(\widetilde{\alpha}_{i^*}(\xi),\alpha_i(z))}
{\widetilde{\mathfrak{e}}_{\alpha_i}(\widetilde{\alpha}_{i^*}(\xi),
-\alpha_i(z))},\\
m_{s_{i^*},e}^{s_i}(z,\xi)&=\frac{\mathfrak{e}_{\alpha_i}(\alpha_i(z),
-\widetilde{\alpha}_{i^*}(\xi))}{\widetilde{\mathfrak{e}}_{\alpha_i}(
\widetilde{\alpha}_{i^*}(\xi),-\alpha_i(z))}.
\end{split}
\end{equation}
\end{thm}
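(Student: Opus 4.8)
The plan is to reduce the theorem to a rank one connection problem in the $\alpha_i$-direction, for which the connection coefficients are given by a classical theta function identity. First, the cocycle property \eqref{CP} together with the relation $m_{\tau_1,\tau_2}^{s_i}(z,\xi)=m_{\tau_2^{-1}\tau_1,e}^{s_i}(z,\tau_2^{-1}\xi)$ reduces everything to the case $\tau_2=e$, i.e. to the single expansion
\[
\Phi(s_iz,\xi)=\sum_{\tau\in W_0}m_{\tau,e}^{s_i}(z,\xi)\,\Phi(z,\tau^{-1}\xi),
\]
where for generic $\xi$ the family $\{\Phi(\cdot,\tau^{-1}\xi)\}_{\tau\in W_0}$ is a basis of the $\#W_0$-dimensional solution space of the quantum affine KZ equations \eqref{qaKZ} (equivalently, the joint eigenspace of the RMKC operators); existence and uniqueness of the $m_{\tau,e}^{s_i}$ in $\mathcal{F}$ is precisely Definition/Theorem \ref{THM3}, obtained through the Cherednik-Matsuo picture of Subsections \ref{IDE} and \ref{CMK}. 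Accordingly I would first establish the theorem on the quantum KZ side, with $\Phi$ replaced by the asymptotically free solution $\Phi_{KZ}$ of \eqref{biqaKZ}, and then transport it back along the difference Cherednik-Matsuo map.

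Second, I would prove the support statement $m_{\tau,e}^{s_i}\equiv0$ for $\tau\notin\{e,s_{i^*}\}$. On the quantum KZ side this is built in: Cherednik's monodromy cocycle for a simple reflection $s_i$ is assembled from the rank one intertwiner of the affine Hecke algebra attached to $\alpha_i$, which acts non-trivially only on the two-dimensional $s_i$-string, forcing $M^{s_i}$ to be supported on the entries indexed by $e$ and $w_0s_iw_0=s_{i^*}$; the element $i^*$ with $\alpha_{i^*}=-w_0\alpha_i$ enters through the $w_0$-twist in the plane wave $\mathcal{W}(z,\xi)=q^{(\rho-\xi,\widetilde{\rho}+w_0z)}$, which couples the $\alpha_i$-direction in $z$ with the $\widetilde{\alpha}_{i^*}$-direction in $\xi$. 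Heuristically the same can be read from Theorem \ref{THM1}(2): letting $z\to\infty$ with $\alpha_i(z)$ fixed and $\alpha_j(z)\to-\infty$ for $j\neq i$ collapses $\Psi(s_iz,\xi)$ and every $\Psi(z,\tau^{-1}\xi)$ onto their $\mathbb{Z}_{\geq 0}\alpha_i$-parts (since $s_i$ merely permutes $R_0^+\setminus\{\alpha_i\}$ and flips $\alpha_i$), and matching the surviving plane waves modulo $\mathbb{Z}\alpha_i$ leaves only $\tau^{-1}\xi\in\{\xi,s_{i^*}\xi\}$, while simultaneously identifying the limits of $m_{e,e}^{s_i}$ and $m_{s_{i^*},e}^{s_i}$ with the connection coefficients of the rank one basic Harish-Chandra series in the $\alpha_i$-direction.

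Third, I would solve that rank one problem explicitly. The rank one basic Harish-Chandra series is known in terms of a very-well-poised basic hypergeometric series (see \cite[\S 5]{StSph}), and its connection coefficients are given by the classical theta function connection formula for such functions---the Askey-Wilson, continuous $q$-Jacobi, or continuous $q$-ultraspherical connection formula, according to the local rank one reduction attached to $\alpha_i$ in Subsection \ref{idsection}. Substituting $x=\alpha_i(z)$, $y=\widetilde{\alpha}_{i^*}(\xi)$, unwinding the definitions of $\mathfrak{e}_\alpha$ and $\widetilde{\mathfrak{e}}_\alpha$, and simplifying theta quotients by the functional equation \eqref{fe}, this output coincides with \eqref{explicitm}. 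The selfduality of $\Phi$ (Theorem \ref{THM2}) is the structural reason the two building blocks organise into $\mathfrak{e}_{\alpha_i}$ and its dual $\widetilde{\mathfrak{e}}_{\alpha_i}$, and, together with the inversion relation $M^{s_i}(z,\xi)M^{s_i}(s_iz,\xi)=\mathrm{Id}$ coming from $s_i^2=e$, it provides convenient consistency checks on \eqref{explicitm}.

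The main obstacle is the passage from the rank one limit to the global identity on $E_{\mathbb{C}}\times E_{\mathbb{C}}$: a priori $m_{\tau,e}^{s_i}$ is only a $\widetilde{\Lambda}\times\Lambda$-translation invariant meromorphic function of the full variables $(z,\xi)$, and one must show it depends on $z$ and $\xi$ solely through $\alpha_i(z)$ and $\widetilde{\alpha}_{i^*}(\xi)$ and equals the stated theta quotient everywhere. I would control this by a Liouville-type argument: by Theorem \ref{THM1}(2c) the zeros and poles of $\Phi$ are governed by $\mathcal{S}$ and $\widetilde{\mathcal{S}}$ and lie on $\alpha$- and $\widetilde{\alpha}$-hyperplanes, which pins the divisor of each entry in the variables $\alpha_i(z)$ and $\widetilde{\alpha}_{i^*}(\xi)$; combined with the $2\mu_{\alpha_i}$-quasi-periodicity this determines the entry up to a multiplicative constant, which is fixed by the rank one limit above. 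On the quantum KZ side this is subsumed in the explicit form of the $C$-matrices of Theorem \ref{BqKZ} and the local structure of Cherednik's cocycle, and that is the route I would carry out in detail, reserving the $\Phi$-picture for the final reformulation.
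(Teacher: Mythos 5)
Your proposal follows essentially the same route as the paper: reduce to $\tau_2=e$, kill the spurious entries by an asymptotic argument, degenerate the connection problem to a rank one problem in the $\alpha_i$-direction, and identify the surviving coefficients with the known connection coefficients of the rank one asymptotically free eigenfunction of the Askey--Wilson operator from \cite{StQ}. The paper carries this out entirely on the scalar side: it introduces the limit $z\mapsto z-m\widetilde{\delta}_i$, $m\to\infty$, with $(\widetilde{\delta}_i,\widetilde{\alpha}_i^\vee)=0$ and $(\widetilde{\delta}_i,\widetilde{\alpha}_j^\vee)>0$ for $j\neq i$, which collapses $\Phi$ onto a one-variable function $\Phi_i(\alpha_i(z),\xi)$ while leaving the $\widetilde{\Lambda}\times\Lambda$-invariant coefficients $m^{s_i}_{\tau,e}$ untouched; a second limit in the $\widetilde{\varpi}_i$-direction then isolates each $m^{s_i}_{\sigma,e}$ and proves the vanishing (Proposition \ref{zero}). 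Your ``heuristic'' asymptotic argument is precisely this, and it is the part you should promote to the actual proof. By contrast, your preferred route of reading the support statement off the $C$-matrices of Theorem \ref{BqKZ} is not justified as stated: the matrices $C_{(s_i,e)}$ are local algebraic data, whereas $M^{s_i}$ is a transition matrix between asymptotic expansions in adjacent chambers, and the two-dimensionality of the $s_i$-intertwiner string does not by itself force $M^{s_i}$ to be two-sparse; Cherednik's statement \cite[Cor.~5.3]{CQKZ} is itself a theorem requiring the asymptotic analysis.

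Two further points where your plan needs more than is written. First, to invoke the rank one connection formula you must know that $\Phi_i(\cdot,\xi)$ is \emph{the} asymptotically free eigenfunction of the Askey--Wilson operator $\mathcal{M}_i$ with parameters $(a_i,b_i,c_i,d_i)$. This is not automatic: the paper's Proposition \ref{XYX}{\bf (ii)} requires a case analysis (via \cite[Prop.~3.13]{LS}) according to whether $\alpha_i\in W_0\psi$ and whether $(\widetilde{Q},\widetilde{\alpha}_i^\vee)$ equals $\mathbb{Z}$ or $2\mathbb{Z}$; in two of the three cases one only gets a first order operator $\mathcal{N}_i$ or a factorized fourth order relation, and the second order equation is recovered through a factorization of $\mathcal{M}_i$ tied to quadratic transformation formulas. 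Your phrase ``according to the local rank one reduction'' gestures at this but the step is nontrivial. Second, for the passage from the rank one identity to the global one, your Liouville-type divisor argument would work but is replaced in the paper by a cleaner device: a $2\mu_i$-periodic Wronskian $\lbrack f,g\rbrack$ for eigenfunctions of $\mathcal{M}_i$, computed explicitly from the asymptotics of $\Phi_i$, which expresses both the rank one coefficients $n_{\pm}^{s_i}$ and the full coefficients $m^{s_i}_{e,e}$, $m^{s_i}_{s_{i^*},e}$ as the same ratios of Wronskians evaluated at $x=\alpha_i(z)$, forcing them to coincide. Either mechanism is fine, but the Wronskian route avoids having to control the full divisor of $\Phi$.
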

%%%%%%%%%%%%%%%%%%%%%%%%%%%%%%%%%%%%%%%%%%%%%%%%%%%%%%%%%%%%%%%%%%%%%%
Note that it is not immediatedly clear that the right hand sides of
\eqref{explicitm} are $\widetilde{\Lambda}
\times\Lambda$-translation invariant since the $\mathfrak{e}_{\alpha}(x,y)$
are only $2\mu_\alpha$-translation invariant in $x$ and $y$. 
The $\widetilde{\Lambda}\times\Lambda$-translation invariance
can be verified directly using quadratic transformation formulas, 
see \cite[\S 7]{StQ}.

The proof of Theorem \ref{THM4}, which follows from rank reduction \cite{LS} 
and the explicit formulas \cite{StQ} for the connection coefficients
in rank one, is discussed in Section \ref{CC}. 

The expression for the nontrivial 
entries $m_{e,e}^{s_i}$ and $m_{s_{i^*},e}^{s_i}$ of the connection
matrix $M^{s_i}$ simplifies for the continuous
$q$-ultraspherical case $\bigl(\Lambda,\alpha_i^\vee\bigr)=
\mathbb{Z}=\bigl(\widetilde{\Lambda},
\widetilde{\alpha}_i^\vee\bigr)$ by a direct application of 
the so called addition formula \cite{WW} for theta functions
\begin{equation}\label{Rid}
\theta\bigl(x\lambda,x/\lambda,\mu\nu,\mu/\nu;q\bigr)-
\theta\bigl(x\nu,x/\nu,\lambda\mu,\mu/\lambda;q\bigr)=
\frac{\mu}{\lambda}\theta\bigl(x\mu,x/\mu,\lambda\nu,\lambda/\nu;q\bigr),
\end{equation}
which plays a fundamental role in the theory \cite[Chpt. 11]{GR}
of elliptic hypergeometric functions (see \cite[App. A]{Sp} for a detailed 
discussion of \eqref{Rid}). 
%%%%%%%%%%%%%%%%%%%%%%%%%%%%%%%%%%%%%%%%%%%%%%%%%%%%%%%%%%%%%%%%%%%%%%%
\begin{prop}\label{cUScase}
If $\bigl(\Lambda,\alpha_i^\vee\bigr)=\mathbb{Z}=\bigl(\widetilde{\Lambda},
\widetilde{\alpha}_i^\vee\bigr)$ then
\begin{equation*}
\begin{split}
m_{e,e}^{s_i}(z,\xi)&=q^{\frac{1}{\mu_i}(2\kappa_i-\widetilde{\alpha}_{i^*}(\xi))
\alpha_i(z)}\frac{\theta\bigl(a_i,q^{\widetilde{\alpha}_{i^*}(\xi)-\alpha_i(z)};q_i
\bigr)}{\theta\bigl(q^{\widetilde{\alpha}_{i^*}(\xi)},
a_iq^{-\alpha_i(z)};q_i\bigr)},\\
m_{s_{i^*},e}^{s_i}(z,\xi)&=q^{\frac{2\kappa_i}{\mu_i}(\alpha_i(z)-
\widetilde{\alpha}_{i^*}(\xi))}
\frac{\theta\bigl(a_iq^{-\widetilde{\alpha}_{i^*}(\xi)},q^{-\alpha_i(z)};q_i\bigr)}
{\theta\bigl(a_iq^{-\alpha_i(z)},q^{-\widetilde{\alpha}_{i^*}(\xi)};q_i\bigr)},
\end{split}
\end{equation*}
where $\mu_i:=\mu_{\alpha_i}$, $q_i:=q_{\alpha_i}$, $\kappa_i:=\kappa_{\alpha_i}$
and $a_i:=a_{\alpha_i}$.
\end{prop}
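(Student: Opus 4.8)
The plan is to deduce the proposition directly from the explicit connection coefficients of Theorem~\ref{THM4} by specialising the free parameters and invoking a handful of classical theta-function identities. Throughout, abbreviate $x:=\alpha_i(z)$ and $y:=\widetilde{\alpha}_{i^*}(\xi)$. First I would record that in the continuous $q$-ultraspherical case $\kappa_{2\alpha_i^{(1)}}=\kappa_{2\alpha_i}=\kappa_{\alpha_i^{(1)}}=\kappa_i$, so that by \eqref{AW} and \eqref{dualAW} the Askey-Wilson parameters attached to $\alpha_i$ and their duals all coincide, $a_{\alpha_i}=\widetilde{a}_{\alpha_i}=q^{2\kappa_i}=a_i$, $b_{\alpha_i}=\widetilde{b}_{\alpha_i}=-1$, $c_{\alpha_i}=\widetilde{c}_{\alpha_i}=q_iq^{2\kappa_i}$ and $d_{\alpha_i}=\widetilde{d}_{\alpha_i}=-q_i$; in particular the Gaussian prefactors of $\mathfrak{e}_{\alpha_i}$ and $\widetilde{\mathfrak{e}}_{\alpha_i}$ agree and $\mathfrak{e}_{\alpha_i}=\widetilde{\mathfrak{e}}_{\alpha_i}$. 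Next, using the base-doubling identity $\theta(w;q^2)\theta(qw;q^2)=\theta(w;q)$, the quadratic identity $\theta(w^2;q^2)=\theta(w,-w;q)$, and the inversions $\theta(w;q)=\theta(q/w;q)$ and $\theta(w^{-1};q)=-w^{-1}\theta(w;q)$ (all immediate from the product formula and \eqref{fe}), I would collapse the four numerator and two denominator base-$q_i^2$ theta factors of $\mathfrak{e}_{\alpha_i}(x,y)$ into the reduced shape
\[
\mathfrak{e}_{\alpha_i}(x,y)=q^{-\frac{(2\kappa_i-x)(2\kappa_i-y)}{2\mu_i}}\,
\frac{\theta\bigl(q^{2\kappa_i+y};q_i\bigr)\,\theta\bigl(-q_iq^{y-x-2\kappa_i};q_i^2\bigr)}
{\theta\bigl(q^{y};q_i\bigr)\,\theta\bigl(-q_iq^{y};q_i^2\bigr)\,\theta\bigl(-q_iq^{-x};q_i^2\bigr)}.
\]

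The entry $m_{s_{i^*},e}^{s_i}$ is then immediate: substituting the reduced form into the second line of \eqref{explicitm} (with $\widetilde{\mathfrak{e}}_{\alpha_i}=\mathfrak{e}_{\alpha_i}$), the quotient $\mathfrak{e}_{\alpha_i}(x,-y)/\mathfrak{e}_{\alpha_i}(y,-x)$ has all three of its base-$q_i^2$ theta factors cancel, the surviving quotient of base-$q_i$ thetas is $\theta(q^{2\kappa_i-y},q^{-x};q_i)/\theta(q^{2\kappa_i-x},q^{-y};q_i)$, and the quotient of Gaussian prefactors has exponent $\tfrac{2\kappa_i}{\mu_i}(x-y)$; writing $q^{2\kappa_i}=a_i$ gives exactly the stated formula.

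For $m_{e,e}^{s_i}$ I would write $\mathfrak{e}_{\alpha_i}(x,y)=G(x,y)R(x,y)$ with $G$ the (symmetric) Gaussian, bring $\mathfrak{e}_{\alpha_i}(x,y)-\mathfrak{e}_{\alpha_i}(y,x)=G(x,y)\bigl(R(x,y)-R(y,x)\bigr)$ over a common denominator, cancel the common factors $\theta(-q_iq^{\pm x},-q_iq^{\pm y};q_i^2)$, and divide by $\widetilde{\mathfrak{e}}_{\alpha_i}(y,-x)=\mathfrak{e}_{\alpha_i}(y,-x)$ (using $\theta(q^{-x};q_i)=-q^{-x}\theta(q^{x};q_i)$ to simplify the remaining base-$q_i$ denominator). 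This reduces the entire entry to the single theta identity
\[
\theta\bigl(q^{2\kappa_i+y},q^{x};q_i\bigr)\theta\bigl(-q_iq^{y-x-2\kappa_i};q_i^2\bigr)
-\theta\bigl(q^{2\kappa_i+x},q^{y};q_i\bigr)\theta\bigl(-q_iq^{x-y-2\kappa_i};q_i^2\bigr)
=-q^{x}\,\theta\bigl(q^{2\kappa_i},q^{y-x};q_i\bigr)\theta\bigl(-q_iq^{-x-y-2\kappa_i};q_i^2\bigr).
\]
Passing all base-$q_i$ factors to base $q_i^2$ and normalising the theta arguments by \eqref{fe}, the left-hand side becomes, up to an explicit monomial in $q$, the difference of the first two products in the addition formula \eqref{Rid} (with base $q_i^2$ and a suitable choice of its parameters $x,\lambda,\mu,\nu$), and \eqref{Rid} rewrites it as the third product --- which is the right-hand side above. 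Keeping track of that monomial together with $G$ and the factor from $\theta(q^{-x};q_i)=-q^{-x}\theta(q^{x};q_i)$ yields the prefactor $q^{\frac{1}{\mu_i}(2\kappa_i-\widetilde{\alpha}_{i^*}(\xi))\alpha_i(z)}$, and the substitution $q^{2\kappa_i}=a_i$ then gives the stated formula for $m_{e,e}^{s_i}$.

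The main obstacle I expect is exactly this last identification: choosing the right parameters in \eqref{Rid} --- the natural candidates are the half-integral combinations $q_i^{1/2}q^{(x\pm y)/2\mp\kappa_i}$, whose interplay with the $q_i$ versus $q_i^2$ bases is delicate --- and then controlling the several monomial prefactors arising both from these normalisations (via \eqref{fe}) and from the Gaussian factors, it being the conspiracy of all of these that leaves the clean single power in the final answer. If the explicit matching with \eqref{Rid} turns out to be awkward, the displayed three-term theta identity can instead be established directly: both sides are holomorphic in $q^{x}$ and $q^{y}$, have the same zeros, and transform the same way under $q_i^2$-shifts of $q^{x}$ and of $q^{y}$, so they agree up to a constant which is pinned down at one value (e.g.\ $x=0$, where the first product vanishes). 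This is in the spirit of the quasi-periodicity arguments used for the quadratic transformation formulas of \cite[\S 7]{StQ}.
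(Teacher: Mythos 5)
Your proposal is correct and follows the same route the paper indicates (the paper gives no details beyond asserting that the simplification is "a direct application of the addition formula \eqref{Rid}"): specialise the AW parameters so that $\mathfrak{e}_{\alpha_i}=\widetilde{\mathfrak{e}}_{\alpha_i}$, collapse the base-$q_i^2$ theta products, and reduce each entry to a quotient of thetas plus, for $m^{s_i}_{e,e}$, one three-term theta identity; I checked your reduced form of $\mathfrak{e}_{\alpha_i}$, the cancellations, the Gaussian exponents, and the quasi-periodicity and vanishing needed for your fallback Liouville argument, and they all work out to the stated formulas. The only soft spot is whether your mixed-base three-term identity is \emph{literally} an instance of \eqref{Rid} (the natural parameter matchings are indeed awkward), but your direct quasi-periodicity argument covers that gap completely.
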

%%%%%%%%%%%%%%%%%%%%%%%%%%%%%%%%%%%%%%%%%%%%%%%%%%%%%%%%%%%%%%%%%%%%%%
Such a simplification of $m_{e,e}^{s_i}$ is 
apparently not possible for the continuous
$q$-Jacobi cases and the Askey-Wilson case.

Recall that the connection coefficients are also the transition functions
for the asymptotically free meromorphic solutions of the quantum
KZ equations \eqref{qaKZ}. Hence the explicit computation of the
connection coefficients may be seen as a quantum analog of
the explicit computation of 
the monodromy of the trigonometric KZ equations from \cite{CAKZ,CInt,O}.

The cocycle property \eqref{CP} of the connection cocycle $M(z,\xi)$
comprises root system analogs of dynamical Yang-Baxter type equations.
We clarify this point of view for the important special example  
$D=(R_0,\Delta_0,t,Q,Q)$ with $R_0$ of type $B_n$
($n\geq 3$). 
Choose the ordering of the basis $\Delta_0$ in such a way that
the braid relations of the associated simple reflections $s_i$
are given by
\begin{equation*}
\begin{split}
s_is_{i+1}s_i&=s_{i+1}s_is_{i+1},\qquad 1\leq i\leq n-2,\\
s_{n-1}s_ns_{n-1}s_n&=s_ns_{n-1}s_ns_{n-1},\\
s_is_j&=s_js_i,\qquad\qquad\,\,\, |i-j|>1.
\end{split}
\end{equation*}
The cocycle condition \eqref{CP} of $M(z,\xi)$ then
yields 
\begin{equation}\label{qdYB}
\begin{split}
M^{s_i}(z,\xi)M^{s_{i+1}}(s_iz,\xi)&M^{s_i}(s_{i+1}s_iz,\xi)=\\
&=M^{s_{i+1}}(z,\xi)M^{s_i}(s_{i+1}z,\xi)M^{s_{i+1}}(s_is_{i+1}z,\xi)
\end{split}
\end{equation}
for $1\leq i\leq n-2$ and 
\begin{equation}\label{qdRE}
\begin{split}
M^{s_{n-1}}(z,\xi)&M^{s_n}(s_{n-1}z,\xi)M^{s_{n-1}}(s_ns_{n-1}z,\xi)
M^{s_n}(s_{n-1}s_ns_{n-1}z,\xi)=\\
=&M^{s_n}(z,\xi)M^{s_{n-1}}(s_nz,\xi)M^{s_n}(s_{n-1}s_nz,\xi)
M^{s_{n-1}}(s_ns_{n-1}s_nz,\xi).
\end{split}
\end{equation}

It is natural to view the equations \eqref{qdYB}
and \eqref{qdRE} as modifications of the
dynamical Yang-Baxter
equation \cite{GN,Fe,EV} and the dynamical reflection equation
\cite{BPO,AR,BFKZ,KM} respectively, with $z$ playing the role
of spectral parameter and $\xi$ the role of dynamical parameter.
This viewpoint 
can be understood 
from the interpretation of the connection
coefficients as the transition functions for asymptotically
free meromorphic solutions of the quantum KZ equations \eqref{qaKZ}.
For example, for the $\textup{GL}_{n+1}$ root system datum, 
the quantum KZ equations
\eqref{qaKZ} coincide with Frenkel-Reshetikhin-Smirnov type quantum
KZ equations. The corresponding transition functions 
are governed by 
the elliptic solution \cite{JKMO,JMO} of the star triangle equation
associated to the integrable $A_n^{(1)}$ face model (see
(\cite[\S 6 \& \S 7]{FR} and \cite{TV1,dM,Ko}) which, in turn,
is known \cite{Fe} 
to be equivalent to Felder's \cite[Prop. 1]{Fe} elliptic solution of the 
dynamical Yang-Baxter equation. Note also the resemblance of  
the explicit expression \cite[Prop. 1]{Fe} of Felder's 
elliptic solution of the dynamical Yang-Baxter equation with the
explicit expression of the connection matrix $M^{s_i}(z,\xi)$ from Proposition
\ref{cUScase}.

Theorem \ref{THM4} now also  
provides an explicit expression of the solution $M^{s_n}(z,\xi)$
of the associated modified dynamical reflection equation
\eqref{qdRE}. 
Note that it depends
on four free parameters
(the Askey-Wilson parameters associated to the short simple root
$\alpha_n$). 
It is expected to provide elliptic dynamical $K$-matrices for
the $A_n^{(1)}$ face models, hence giving rise to
new families of integrable $A_n^{(1)}$ face type models
with reflecting boundary conditions. Thus far, 
$A_n^{(1)}$ face models with reflecting boundary conditions have
only been
constructed for the eight vertex solid-on-solid model \cite{Fi}
with respect to a diagonal solution of the dynamical reflection equation. 

%%%%%%%%%%%%%%%%%%%%%%%%%%%%%%%%%%%%%%%%%%%%%%%%%%%%%%%%%%%%%%%%%%%%%
\subsection{Quantum $c$-functions and basic hypergeometric functions}
\label{csection}
%%%%%%%%%%%%%%%%%%%%%%%%%%%%%%%%%%%%%%%%%%%%%%%%%%%%%%%%%%%%%%%%%%%%%

The basic hypergeometric function $\mathcal{E}_{sph}(z,\xi)$
is a distinguished $W_0\times W_0$-invariant,
meromorphic solution of the bispectral problem of the RMKC operators 
in case that the root system datum is of the form 
$D=(R_0,\Delta_0,t,\Lambda,\Lambda)$ and $\kappa_a>0$. It was constructed
in \cite{CMehta,CWhit,StSM,StSph} as reproducing kernel of a difference
Fourier transform. It admits an explicit series expansion
in symmetric Macdonald-Koornwinder polynomials. Just as for the
basic Harish-Chandra series, specializing $\xi$ to a polynomial
spectral point turns $\mathcal{E}_{sph}(z,\xi)$ into the
pertinent selfdual Macdonald-Koornwinder polynomial,
see, e.g., \cite[Thm. 2.20]{StSph}.  If $R_0$ has rank one then 
$\mathcal{E}_{sph}$ arises as quantum spherical function on 
noncompact quantum groups, see \cite{KS0}. 

The basic hypergeometric
function is the natural 
basic hypergeometric analog of the Heckman-Opdam \cite{HO} hypergeometric
function associated to root systems, cf. \cite[Thm. 4.4]{CWhit} and
\cite{StSph}. 
The Heckman-Opdam hypergeometric function 
is defined in a completely different fashion, see \cite{HO} and
\cite[Part I]{HS}; the explicit computation of the monodromy representation
of the system of hypergeometric differential equations is used to 
define the Heckman-Opdam hypergeometric function as the explicit
expansion in Harish-Chandra series 
which is fixed under the monodromy representation.
The explicit expansion is the
$c$-function expansion of the Heckman-Opdam hypergeometric function;
the coefficients are expressed
in terms of the Harish-Chandra $c$-function \cite[Def. 6.4]{HO}.

In the current basic hypergeometric context, the analog of the 
$c$-function expansion of the 
basic hypergeometric function $\mathcal{E}_{sph}(z,\xi)$ 
has been derived in \cite{StSph} using the asymptotic analysis of the
basic hypergeometric function from \cite{CWhit}.
We now recall the explicit $c$-function expansion of $\mathcal{E}_{sph}(z,\xi)$
from \cite{StSph} and relate it to the explicit computation of the
connection cocycle.

{}From \cite[Thm. 4.6]{StSph} the $c$-function expansion of 
$\mathcal{E}_{sph}(z,\xi)$ is 
\begin{equation}\label{cfunctionexpansion}
\mathcal{E}_{sph}(z,\xi)=\sum_{w\in W_0}\mathfrak{c}_{sph}(z,w\xi)
\Phi(z,w\xi)
\end{equation}
with the quantum analog $\mathfrak{c}_{sph}\in\mathcal{F}$ of the
Harish-Chandra $c$-function defined as follows.

The higher rank theta function is the holomorphic function 
\[
\vartheta_\Lambda(z):=\sum_{\lambda\in\Lambda}q^{\frac{|\lambda|^2}{2}}
q^{(\lambda,z)}
\]
in $z\in E_{\mathbb{C}}$.
Write $\delta_s^\vee:=\frac{1}{2}\sum_{\alpha\in R_{0,s}^+}\alpha^\vee$,
where $R_{0,s}^+\subset R_0^+$ is the subset of positive short roots
in $R_0$. If $R_0$ is simply laced then we set $R_{0,s}^+=R_0^+$.

%%%%%%%%%%%%%%%%%%%%%%%%%%%%%%%%%%%%%%%%%%%%%%%%%%%%%%%%%%%%%%%%%%%
\begin{defi}\label{quantumc}
The quantum $c$-function $\mathfrak{c}_{sph}\in\mathcal{F}$ 
associated to the root system datum $D=(R_0,\Delta_0,t,\Lambda,\Lambda)$
is 
\begin{equation*}
\begin{split}
\mathfrak{c}_{sph}(z,\xi):=&
\mathcal{W}(z,\xi)^{-1}\frac{\vartheta_\Lambda\bigl(
\rho+(\kappa_{2a_0}-\kappa_{0})\delta_s^\vee+z+w_0\xi\bigr)}
{\vartheta_\Lambda\bigl((\kappa_{2a_0}-\kappa_{0})\delta_s^\vee+z\bigr)
\vartheta_\Lambda\bigl((\kappa_{2a_0}-\kappa_{2\psi})\delta_s^\vee-\xi)}\\
\times&\prod_{\alpha\in R_0^+}
\frac{\theta\bigl(\widetilde{a}_\alpha q^{\alpha(\xi)},
\widetilde{b}_\alpha q^{\alpha(\xi)}, \widetilde{c}_\alpha q^{\alpha(\xi)},
\widetilde{d}_\alpha q^{\alpha(\xi)};q_\alpha^2\bigr)}
{\theta\bigl(q^{2\alpha(\xi)};q_\alpha^2\bigr)}
\end{split}
\end{equation*}
(see Theorem \ref{THM1}(a) for the definition of the plane wave $\mathcal{W}(z,\xi)$).
If furthermore $\bigl(\Lambda,\alpha^\vee\bigr)=\mathbb{Z}$
for all $\alpha\in R_0$, then
\[
\mathfrak{c}_{sph}(z,\xi)=\mathcal{W}(z,\xi)^{-1}
\frac{\vartheta_\Lambda(\rho+z+w_0\xi)}{\vartheta_\Lambda(z)\vartheta_\Lambda(\xi)}
\prod_{\alpha\in R_0^+}\frac{\theta\bigl(q^{2\kappa_\alpha+\alpha(\xi)};q_\alpha)}
{\theta\bigl(q^{\alpha(\xi)};q_\alpha\bigr)}.
\]
\end{defi}
%%%%%%%%%%%%%%%%%%%%%%%%%%%%%%%%%%%%%%%%%%%%%%%%%%%%%%%%%%%%%%%%%%%%%
It follows from \cite[Thm. 2.20 {\bf (iv)} \& Thm. 4.6]{StSph}
that for generic multiplicity functions $\kappa$ satisfying $\kappa_a>0$
for all $a\in R(D)$, 
the basic hypergeometric function $\mathcal{E}_{sph}$ satisfies
\[
\mathcal{E}_{sph}(z,\rho)=\frac{1}{\vartheta_\Lambda((\kappa_{2a_0}-\kappa_0)
\delta_s^\vee-\rho)}\prod_{\alpha\in R_0^+}
\frac{\bigl(\widetilde{a}_\alpha q^{\alpha(\rho)},\widetilde{b}_\alpha
q^{\alpha(\rho)},\widetilde{c}_\alpha q^{\alpha(\rho)}, \widetilde{d}_\alpha
q^{\alpha(\rho)};q_\alpha^2\bigr)_{\infty}}
{\bigl(q^{2\alpha(\rho)};q_\alpha^2\bigr)_{\infty}}.
\]
If one renormalizes $\mathcal{E}_{sph}$ such that $\mathcal{E}_{sph}(z,\rho)=1$,
which is the convention used 
in \cite{StSph}, then it becomes selfdual,
see \cite[Thm. 2.20 {\bf (iii)}]{StSph}. In Remark \ref{notationsrelations}
we will precisely match the present notations to the ones used in \cite{StSph}.

For arbitrary initial data $(D,\kappa,q)$, 
if $\mathfrak{c}\in\mathcal{F}$ then the meromorphic function
\begin{equation}\label{cnewexpansion}
\mathcal{E}(z,\xi):=\sum_{w\in W_0}\mathfrak{c}(z,w\xi)\Phi(z,w\xi)
\end{equation}
in $(z,\xi)\in E_{\mathbb{C}}\times E_{\mathbb{C}}$
is $W_0$-invariant in $z$
if and only $\mathfrak{c}\in\mathcal{F}$ satisfies
\begin{equation}\label{relationsc}
\mathfrak{c}(z,\xi)=m_{e,e}^{s_i}(z,\xi)\mathfrak{c}(s_iz,\xi)+
m_{s_{i^*},e}^{s_i}(z,s_{i^*}\xi)\mathfrak{c}(s_iz,s_{i^*}\xi)
\end{equation}
for all $i\in\{1,\ldots,n\}$ in view of Theorem \ref{THM4}. 
In Section \ref{qcsection} we give a direct proof that 
$\mathfrak{c}_{sph}$ indeed satisfies \eqref{relationsc}
if $D=(R_0,\Delta_0,t,\Lambda,\Lambda)$.
It leads to the following
higher rank analog of the addition formula \eqref{Rid} for Jacobi
theta functions.
%%%%%%%%%%%%%%%%%%%%%%%%%%%%%%%%%%%%%%%%%%%%%%%%%%%%%%%%%%%%%%%%%%%%%
\begin{prop}\label{addition}
Suppose $D=(R_0,\Delta_0,t,\Lambda,\Lambda)$ and let $i\in\{1,\ldots,n\}$
such that $\bigl(\Lambda,\alpha_i^\vee\bigr)=\mathbb{Z}$.
Then 
\begin{equation}\label{Ridroot}
\begin{split}
\theta\bigl(q^{\widetilde{\alpha}_{i^*}(\xi)},q^{2\kappa_i-\alpha_i(z)};q_i\bigr)
&\vartheta_\Lambda\bigl(\rho+z+w_0\xi
\bigr)=\\
=&\theta\bigl(q^{2\kappa_i},q^{\widetilde{\alpha}_{i^*}(\xi)-\alpha_i(z)};q_i\bigr)
\vartheta_\Lambda\bigl(\rho+s_iz+w_0\xi
\bigr)\\
-&q^{\widetilde{\alpha}_{i^*}(\xi)}\theta\bigl(q^{2\kappa_i-
\widetilde{\alpha}_{i^*}(\xi)},q^{-\alpha_i(z)};q_i\bigr)
\vartheta_\Lambda\bigl(s_i\rho+z+w_0\xi\bigr).
\end{split}
\end{equation}
\end{prop}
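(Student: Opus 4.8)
The plan is to derive the higher-rank addition formula \eqref{Ridroot} as a direct consequence of the functional equation \eqref{relationsc} for the quantum $c$-function $\mathfrak{c}_{sph}$, specialized to the continuous $q$-ultraspherical index $i$. First I would invoke the explicit formula from Definition \ref{quantumc}: when $D=(R_0,\Delta_0,t,\Lambda,\Lambda)$ and $\bigl(\Lambda,\alpha_i^\vee\bigr)=\mathbb{Z}$, the factor of $\mathfrak{c}_{sph}(z,\xi)$ that depends on $z$ is, up to a plane wave and the $\xi$-dependent theta product over $R_0^+$, precisely $\vartheta_\Lambda(\rho+z+w_0\xi)/\vartheta_\Lambda(z)$ (using the general first formula, not the simply laced one, since only the $i$-th reflection is assumed ultraspherical). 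The Macdonald-Koornwinder reflection relation \eqref{relationsc} with the explicit connection coefficients from Proposition \ref{cUScase} then becomes, after dividing out all factors that are manifestly invariant under $z\mapsto s_iz$ and under $\xi\mapsto s_{i^*}\xi$, a three-term identity among $\vartheta_\Lambda(\rho+z+w_0\xi)$, $\vartheta_\Lambda(\rho+s_iz+w_0\xi)$ and $\vartheta_\Lambda(s_i\rho+z+w_0\xi)$ with theta-function coefficients in $q_i$, which is \eqref{Ridroot} after bookkeeping of the plane-wave prefactors.

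The key computational steps are the following. First, substitute the formulas of Proposition \ref{cUScase} for $m_{e,e}^{s_i}$ and $m_{s_{i^*},e}^{s_i}$ into \eqref{relationsc}; note $m_{s_{i^*},e}^{s_i}(z,s_{i^*}\xi)$ is obtained by replacing $\widetilde{\alpha}_{i^*}(\xi)$ by $\widetilde{\alpha}_{i^*}(s_{i^*}\xi)=-\widetilde{\alpha}_{i^*}(\xi)$ in the second formula, and apply \eqref{fe} to tidy up the resulting theta arguments. Second, insert the three evaluations $\mathfrak{c}_{sph}(z,\xi)$, $\mathfrak{c}_{sph}(s_iz,\xi)$, $\mathfrak{c}_{sph}(s_iz,s_{i^*}\xi)$ from Definition \ref{quantumc}; the $\xi$-product over $R_0^+$ is invariant under $s_{i^*}$ acting on $\xi$ except for the single factor indexed by $\alpha_{i^*}$ (and its $W_0$-image), and that exceptional factor, together with the $\vartheta_\Lambda\bigl((\kappa_{2a_0}-\kappa_{2\psi})\delta_s^\vee-\xi\bigr)$ denominator, produces exactly the $\theta(q^{\widetilde{\alpha}_{i^*}(\xi)};q_i)$-type factors and the power $q^{\widetilde{\alpha}_{i^*}(\xi)}$ appearing in \eqref{Ridroot}. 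Third, track the plane-wave factors $\mathcal{W}(z,\xi)^{-1}=q^{(\rho-\xi,\widetilde{\rho}+w_0z)}$: the ratios $\mathcal{W}(s_iz,\xi)/\mathcal{W}(z,\xi)$ and $\mathcal{W}(s_iz,s_{i^*}\xi)/\mathcal{W}(z,\xi)$ are explicit exponentials in $\alpha_i(z)$ and $\widetilde{\alpha}_{i^*}(\xi)$, and these combine with the exponential prefactors in Proposition \ref{cUScase} to cancel, leaving a clean theta identity.

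For the careful reader I would also check directly that the claimed identity \eqref{Ridroot} is consistent: both sides are holomorphic in $z$, and one verifies the quasi-periodicity in $z$ under $z\mapsto z+\mu$ for $\mu\in\Lambda$ matches on both sides (using the transformation behaviour of $\vartheta_\Lambda$ and of $\theta(\,\cdot\,;q_i)$ together with $\bigl(\Lambda,\alpha_i^\vee\bigr)=\mathbb{Z}$), which pins down the identity up to a constant that is then fixed by a single specialization, e.g. $\alpha_i(z)=\widetilde{\alpha}_{i^*}(\xi)$ where one term drops out. This gives an independent confirmation and, conversely, shows that \eqref{relationsc} for $\mathfrak{c}_{sph}$ at ultraspherical $i$ is equivalent to \eqref{Ridroot}.

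The main obstacle I anticipate is the bookkeeping of the prefactors: disentangling the plane-wave exponentials $\mathcal{W}^{-1}$, the exponential factors $q^{\frac{1}{\mu_i}(2\kappa_i-\widetilde{\alpha}_{i^*}(\xi))\alpha_i(z)}$ and $q^{\frac{2\kappa_i}{\mu_i}(\alpha_i(z)-\widetilde{\alpha}_{i^*}(\xi))}$ from Proposition \ref{cUScase}, and the shifts produced by \eqref{fe} when normalizing theta arguments, and showing they all conspire to leave exactly the exponent-free three-term theta relation \eqref{Ridroot}. This is routine but error-prone; the conceptual content — that \eqref{Ridroot} \emph{is} the addition formula \eqref{Rid} in disguise, with $x,\lambda,\mu,\nu$ replaced by the higher-rank theta data $\vartheta_\Lambda(\rho+z+w_0\xi)$ etc. — follows once one recognizes, after reduction, that the rank-one piece of $\vartheta_\Lambda$ along $\alpha_i$ factors out and the residual identity is precisely \eqref{Rid}.
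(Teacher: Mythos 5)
Your proposal follows essentially the same two routes as the paper: the paper's first proof derives \eqref{Ridroot} from the relation \eqref{relationsc} for $\mathfrak{c}_{sph}$ (available from the $c$-function expansion \eqref{cfunctionexpansion} together with Theorem \ref{THM4}) by substituting the explicit connection coefficients and simplifying — this is exactly Lemma \ref{BBB} plus the unwinding of $\Xi_{sph}$ — and the paper's second proof is your direct analytic verification. Three points to tighten. First, in the general formula of Definition \ref{quantumc} the $\vartheta_\Lambda$-arguments carry shifts by $(\kappa_{2a_0}-\kappa_0)\delta_s^\vee$ and $(\kappa_{2a_0}-\kappa_{2\psi})\delta_s^\vee$ which you drop; the computation actually yields \eqref{Ridroot} with $\rho$ replaced by $\rho+(\kappa_{2a_0}-\kappa_0)\delta_s^\vee$, and one must note that either this shift vanishes (when $\bigl(\Lambda,\alpha^\vee\bigr)=\mathbb{Z}$ for all $\alpha$) or $\alpha_i$ is long, so $s_i\delta_s^\vee=\delta_s^\vee$ and the shift is absorbed by translating $z$. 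Second, in the direct verification, matching the quasi-periodicities of the two sides does not by itself pin the identity down to a constant: as in the paper one should divide the right-hand side by $\theta\bigl(q^{2\kappa_i-\alpha_i(z)};q_i\bigr)$, check that the quotient is holomorphic (which requires verifying that the right-hand side vanishes on the zero set of that theta factor) with the automorphy factors of $\vartheta_\Lambda(\rho+z+w_0\xi)$, and then invoke the one-dimensionality of the space of such theta functions before fixing the constant by a specialization (the paper uses $z=0$, which is cleaner than $\alpha_i(z)=\widetilde{\alpha}_{i^*}(\xi)$). Third, your closing claim that ``the rank-one piece of $\vartheta_\Lambda$ along $\alpha_i$ factors out and the residual identity is precisely \eqref{Rid}'' is false for general $\Lambda$: the higher rank theta function $\vartheta_\Lambda$ factors into rank-one Jacobi theta functions only for special lattices such as $\Lambda=\bigoplus_j\mathbb{Z}\epsilon_j$ (Remark \ref{relationRid}), so \eqref{Ridroot} is a genuine higher-rank analog of \eqref{Rid} rather than \eqref{Rid} in disguise; this remark is not needed for your proof, but it should not be presented as its conceptual justification.
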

%%%%%%%%%%%%%%%%%%%%%%%%%%%%%%%%%%%%%%%%%%%%%%%%%%%%%%%%%%%%%%%%%%%
The definition of the basic hypergeometric function as reproducing kernel
of a difference Fourier transform is restricted to the twisted equal
lattice case, $D=(R_0,\Delta_0,t,\Lambda,\Lambda)$. With the explicit
expressions of the connection coefficients now available, it is
natural to try to extend the method employed in the
construction of the Heckman-Opdam
hypergeometric function and define the appropriate analog of the basic 
hypergeometric function
for all root system data $D$ as the expansion \eqref{cnewexpansion}
for a distinguished solution $\mathfrak{c}\in\mathcal{F}$
of the equations \eqref{relationsc}.
This is a subtle
matter, since the equations \eqref{relationsc} do not determine
$\mathfrak{c}\in\mathcal{F}$ uniquely, cf. Subsection \ref{ReflSect}.
We do not pursue this issue in the present paper, although we will make some
initial steps in the analysis of the equations \eqref{relationsc} in 
Section \ref{qcsection}.

%%%%%%%%%%%%%%%%%%%%%%%%%%%%%%%%%%%%%%%%%%%%%%%%%%%%%%%%%%%%%%%%%%%
\subsection{Reflectionless basic Harish-Chandra series}
\label{ReflSect}
%%%%%%%%%%%%%%%%%%%%%%%%%%%%%%%%%%%%%%%%%%%%%%%%%%%%%%%%%%%%%%%%%%%%
In, e.g., \cite{RuR1,RuR2,vDP}, reflectionless analytic difference
operators are studied. These are difference analogs of 
one-dimensional Schr{\"o}dinger operators
admitting a meromorphic eigenfunction $\phi(\cdot,p)$ 
in $(\cdot,p)\in\mathbb{C}^2$ with eigenvalue
$e^p+e^{-p}$ and having 
plane wave asymptotics 
\begin{equation*}
\begin{split}
\phi(x,p)&\sim e^{\sqrt{-1}xp},\qquad\qquad\qquad\qquad
\qquad\quad\Re(x)\rightarrow\infty,\\
\phi(x,p)&\sim \alpha(p)e^{\sqrt{-1}xp}+\beta(p)e^{-\sqrt{-1}xp},
\qquad\, \Re(x)\rightarrow-\infty
\end{split}
\end{equation*}
with $\beta\equiv 0$. 
This has the following analog
for RMKC operators.

Write
$\ell(\nu):=\textup{min}\bigl((\nu,\widetilde{\alpha}_i^\vee)\bigr)_{i=1}^n$
for $\nu\in\widetilde{\Lambda}$. 
Then for generic $\xi\in E_{\mathbb{C}}$,
\[
\Phi(z-\nu,\xi)=\frac{q^{(\widetilde{\rho},\rho-\xi)}\Gamma_0(\xi)}
{\widetilde{\mathcal{S}}(\xi)}e^{-(\rho+w_0\xi,z-\nu)}(1+
\mathcal{O}(q^{\ell(\nu)}))
\]
as $\ell(\nu)\rightarrow\infty$, uniformly for $z$ in compacta of
$E_{\mathbb{C}}$, in view of Theorem \ref{THM1}. It describes the 
plane wave asymptotics of $\Phi(z,\xi)$ as common eigenfunction of the
RMKC operators for
the real part $\Re(z)$ of $z$ deep in the negative fundamental Weyl chamber
$E_-:=\{v\in E \,\, | \,\, \alpha(v)<0 \quad \forall\, \alpha\in R_0^+\}$. 
By Theorem \ref{THM3}, $\Phi(z,\xi)$ has plane wave asymptotics
for $\Re(z)$ deep in an arbitrary Weyl chamber $\tau(E_-)$ ($\tau\in W_0$). 
In particular, for the Weyl chambers neighboring $E_-$, 
\begin{equation*}
\begin{split}
\Phi(z-s_i\nu,\xi)&\sim m_{e,e}^{s_i}(s_iz,\xi)
\frac{q^{(\widetilde{\rho},\rho-\xi)}\Gamma_0(\xi)}{\widetilde{\mathcal{S}}(\xi)}
e^{-(s_i\rho+w_0s_{i^*}\xi,z-s_i\nu)}\\
&+m_{s_{i^*},e}^{s_i}(s_iz,\xi)\frac{q^{(\widetilde{\rho},\rho-s_{i^*}\xi)}
\Gamma_0(s_{i^*}\xi)}{\widetilde{\mathcal{S}}(s_{i^*}\xi)}
e^{-(s_i\rho+w_0\xi,z-s_i\nu)}
\end{split}
\end{equation*}
as $\ell(\nu)\rightarrow\infty$ for generic
$z,\xi\in E_{\mathbb{C}}$ ($i=1,\ldots,n$) by Theorem \ref{THM4}.
Thus we come to the following analog of reflectionless
in the context of RMKC operators.
%%%%%%%%%%%%%%%%%%%%%%%%%%%%%%%%%%%%%%%%%%%%%%%%%%%%%%%%%
\begin{defi}
We say that the RMKC operators are reflectionless 
if $m_{e,e}^{s_i}\equiv 0$ for $i=1,\ldots,n$. In this case 
the associated basic Harish-Chandra series $\Phi$ is said to be
reflectionless.
\end{defi}
%%%%%%%%%%%%%%%%%%%%%%%%%%%%%%%%%%%%%%%%%%%%%%%%%%%%%%%%%%%%%%%%%%%%%%%
By Theorem \ref{THM4}, the RMKC operators are reflectionless if and only if
\begin{equation}\label{equive}
\mathfrak{e}_\alpha(x,y)=\widetilde{\mathfrak{e}}_\alpha(y,x)
\end{equation}
as meromorphic functions in $(x,y)\in\mathbb{C}\times\mathbb{C}$ for all
$\alpha\in R_0$.
The following result now follows by straightforward 
computations using the functional equation \eqref{fe} for the normalized
Jacobi theta function $\theta(x;q)$ 
(cf. \cite[Prop. 3.1]{StQ} for a weaker statement).
%%%%%%%%%%%%%%%%%%%%%%%%%%%%%%%%%%%%%%%%%%%%%%%%%%%%%%%%%%%%%%%%%%%%%%%
\begin{prop}\label{ReflectionlessProp}
Suppose that the multiplicity function $\kappa$
satisfies
\begin{equation}\label{BAconditions}
\begin{split}
&\kappa_{\alpha}\pm\kappa_{\alpha^{(1)}},
\kappa_{2\alpha}\pm\kappa_{2\alpha^{(1)}},
\kappa_\alpha\pm\kappa_{2\alpha},
\kappa_{\alpha^{(1)}}\pm\kappa_{2\alpha^{(1)}}\in\mu_\alpha\mathbb{Z},\\
&\kappa_{\alpha}+\kappa_{2\alpha}+\kappa_{\alpha^{(1)}}+\kappa_{2\alpha^{(1)}}
\in 2\mu_\alpha\mathbb{Z}
\end{split}
\end{equation}
for all $\alpha\in R_0$.
Then the RMKC operators are reflectionless, and $m_{s_{i^*},e}^{s_i}\equiv 1$
for $i=1,\ldots,n$.
\end{prop}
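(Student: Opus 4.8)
The plan is to reduce the statement to a purely rank-one identity and then verify that identity by a direct computation with the functional equation \eqref{fe}. By Theorem \ref{THM4} it suffices to show that, under the hypotheses \eqref{BAconditions}, one has $\mathfrak{e}_\alpha(x,y)=\widetilde{\mathfrak{e}}_\alpha(y,x)$ as meromorphic functions of $(x,y)$ for every $\alpha\in R_0$; indeed \eqref{equive} immediately forces $m_{e,e}^{s_i}\equiv 0$ (the numerator of $m_{e,e}^{s_i}$ in \eqref{explicitm} is $\mathfrak{e}_{\alpha_i}(\alpha_i(z),\widetilde{\alpha}_{i^*}(\xi))-\widetilde{\mathfrak{e}}_{\alpha_i}(\widetilde{\alpha}_{i^*}(\xi),\alpha_i(z))$, which vanishes), and then $m_{s_{i^*},e}^{s_i}(z,\xi)=\mathfrak{e}_{\alpha_i}(\alpha_i(z),-\widetilde{\alpha}_{i^*}(\xi))/\widetilde{\mathfrak{e}}_{\alpha_i}(\widetilde{\alpha}_{i^*}(\xi),-\alpha_i(z))=\widetilde{\mathfrak{e}}_{\alpha_i}(-\widetilde{\alpha}_{i^*}(\xi),\alpha_i(z))/\widetilde{\mathfrak{e}}_{\alpha_i}(\widetilde{\alpha}_{i^*}(\xi),-\alpha_i(z))$, which by the $2\mu_\alpha$-translation invariance of $\widetilde{\mathfrak{e}}_\alpha$ in each variable (together with the first hypothesis in \eqref{BAconditions}, which places the relevant shifts in $\mu_\alpha\mathbb{Z}$) will collapse to $1$.

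The heart of the matter is therefore the identity $\mathfrak{e}_\alpha(x,y)=\widetilde{\mathfrak{e}}_\alpha(y,x)$. Writing out the two sides from the definitions, $\mathfrak{e}_\alpha(x,y)$ has prefactor $q^{-\frac{1}{2\mu_\alpha}(\kappa_\alpha+\kappa_{2\alpha}-x)(\kappa_\alpha+\kappa_{\alpha^{(1)}}-y)}$ and theta quotient $\theta(\widetilde{a}_\alpha q^y,\widetilde{b}_\alpha q^y,\widetilde{c}_\alpha q^y,d_\alpha q^{y-x}/\widetilde{a}_\alpha;q_\alpha^2)/\theta(q^{2y},d_\alpha q^{-x};q_\alpha^2)$, while $\widetilde{\mathfrak{e}}_\alpha(y,x)$ has the same prefactor (the two exponents agree after swapping $x\leftrightarrow y$ in the dual expression) and theta quotient $\theta(a_\alpha q^x, b_\alpha q^x, c_\alpha q^x, \widetilde{d}_\alpha q^{x-y}/a_\alpha; q_\alpha^2)/\theta(q^{2x},\widetilde{d}_\alpha q^{-y};q_\alpha^2)$. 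First I would record the explicit AW/dual-AW values from \eqref{AW}–\eqref{dualAW}: $a_\alpha/\widetilde{a}_\alpha=q^{\kappa_{2\alpha}-\kappa_{\alpha^{(1)}}}$, $b_\alpha/\widetilde{b}_\alpha=q^{-\kappa_{2\alpha}+\kappa_{\alpha^{(1)}}}$ (up to sign), $d_\alpha/\widetilde{a}_\alpha = -q_\alpha q^{\kappa_{\alpha^{(1)}}-\kappa_{2\alpha^{(1)}}-\kappa_\alpha-\kappa_{\alpha^{(1)}}}$, and so on; hypothesis \eqref{BAconditions} guarantees precisely that all of $\kappa_\alpha\pm\kappa_{\alpha^{(1)}}$, $\kappa_{2\alpha}\pm\kappa_{2\alpha^{(1)}}$, $\kappa_\alpha\pm\kappa_{2\alpha}$, $\kappa_{\alpha^{(1)}}\pm\kappa_{2\alpha^{(1)}}$ lie in $\mu_\alpha\mathbb{Z}$, i.e. the relevant ratios of (dual) AW parameters are integer powers of $q_\alpha$. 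Applying \eqref{fe} repeatedly, each theta factor $\theta(\widetilde{a}_\alpha q^y; q_\alpha^2)$ can be rewritten as a monomial in $q$ times $\theta(a_\alpha q^{x};q_\alpha^2)$-type factors with shifted arguments matched to those appearing in $\widetilde{\mathfrak{e}}_\alpha(y,x)$; the leftover monomial prefactors, collected over all four numerator factors and the two denominator factors, must combine with the Gaussian prefactor discrepancy to give exactly $1$, and this is where the quadratic relation $\kappa_\alpha+\kappa_{2\alpha}+\kappa_{\alpha^{(1)}}+\kappa_{2\alpha^{(1)}}\in 2\mu_\alpha\mathbb{Z}$ is used.

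The main obstacle is the bookkeeping of monomial prefactors: \eqref{fe} produces a factor $(-q_\alpha^{-1}x)^{-r}q_\alpha^{-r^2}$ (with base $q_\alpha^2$) for each shift, so one collects both a sign $(-1)^{\sum r_j}$, a linear-in-$(x,y)$ monomial, and a purely numerical term quadratic in the $\kappa$'s, and these must be shown to cancel against the Gaussian prefactor $q^{-\frac{1}{2\mu_\alpha}(\cdots)(\cdots)}$ term by term. I expect the signs to cancel because the four AW parameters come in two $\pm$ pairs (the $b,c$ or $b,d$ factors carry the signs), the linear monomial to cancel because the arguments of $\mathfrak{e}_\alpha(x,y)$ and $\widetilde{\mathfrak{e}}_\alpha(y,x)$ are symmetric under $x\leftrightarrow y$ up to the built-in shift $q^{y-x}$ vs. $q^{x-y}$, and the numerical quadratic term to cancel exactly under the second condition in \eqref{BAconditions}. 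Once $\mathfrak{e}_\alpha(x,y)=\widetilde{\mathfrak{e}}_\alpha(y,x)$ is established for all $\alpha$, both conclusions of the Proposition follow as indicated above, completing the argument.
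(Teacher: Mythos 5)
Your overall route is the same as the paper's: the paper reduces the reflectionless property, via Theorem \ref{THM4}, to the identity $\mathfrak{e}_\alpha(x,y)=\widetilde{\mathfrak{e}}_\alpha(y,x)$ of \eqref{equive} and then declares the verification a ``straightforward computation using the functional equation \eqref{fe}''. Your observation that the Gaussian prefactors of $\mathfrak{e}_\alpha(x,y)$ and $\widetilde{\mathfrak{e}}_\alpha(y,x)$ literally coincide is correct, and your accounting of which hypothesis in \eqref{BAconditions} enters where (the linear conditions to make ratios of (dual) AW parameters into powers of $q_\alpha$, the quadratic condition to kill the residual numerical factor) is the right picture. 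Two caveats on the computation itself: since there is no Gaussian discrepancy, the leftover monomials must cancel among themselves; and the matching of theta factors is not purely an application of \eqref{fe}, because the arguments on the two sides differ by sign inversions (e.g.\ $\theta(d_\alpha q^{-x};q_\alpha^2)$ versus factors in $q^{+x}$) and by shifts that are odd powers of $q_\alpha$ in the base $q_\alpha^2$, so you also need the inversion $\theta(u^{-1};q)=-u^{-1}\theta(u;q)$ and pairings of factors (quadratic theta identities), not just \eqref{fe} factor by factor. This is still within the scope of the ``straightforward computation'' the paper invokes.

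The genuine gap is in your derivation of $m_{s_{i^*},e}^{s_i}\equiv 1$. After applying \eqref{equive} you need
$\widetilde{\mathfrak{e}}_{\alpha_i}(-u,v)=\widetilde{\mathfrak{e}}_{\alpha_i}(u,-v)$ with $u=\widetilde{\alpha}_{i^*}(\xi)$, $v=\alpha_i(z)$, and you justify this by the $2\mu_\alpha$-translation invariance of $\widetilde{\mathfrak{e}}_\alpha$ in each variable. That cannot work: passing from $(-u,v)$ to $(u,-v)$ changes the arguments by $(2u,-2v)$, which depends on the variables and is not a translation by an element of $2\mu_\alpha\mathbb{Z}$. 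The identity $\widetilde{\mathfrak{e}}_\alpha(-u,v)=\widetilde{\mathfrak{e}}_\alpha(u,-v)$ (equivalently $\mathfrak{e}_\alpha(x,-y)=\widetilde{\mathfrak{e}}_\alpha(y,-x)$) is true under \eqref{BAconditions} --- one can check it for instance in the continuous $q$-ultraspherical case directly from Proposition \ref{cUScase} with $a_i=q_i^{m}$, $m=2\kappa_i/\mu_i\in\mathbb{Z}$ --- but it requires its own direct theta-function computation of exactly the same kind as \eqref{equive}, using \eqref{fe} together with $\theta(u^{-1};q)=-u^{-1}\theta(u;q)$. (Note also that the cocycle relation $M^{s_i}(z,\xi)M^{s_i}(s_iz,\xi)=\textup{Id}$ only yields the product relation $m_{s_{i^*},e}^{s_i}(z,s_{i^*}\xi)\,m_{s_{i^*},e}^{s_i}(s_iz,\xi)=1$ once $m_{e,e}^{s_i}\equiv 0$ is known, so it does not supply a shortcut either.) You should replace the translation-invariance argument by this explicit verification.
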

%%%%%%%%%%%%%%%%%%%%%%%%%%%%%%%%%%%%%%%%%%%%%%%%%%%%%%%%%%%%%%%%%%%%%%%
If the multiplicity function $\kappa$ 
satisfies \eqref{BAconditions},
then so does the dual multiplicity function $\widetilde{\kappa}$ (see 
Lemma \ref{dualparameterslem}). Note the following special cases:\\

\noindent
{\bf Continuous $q$-ultraspherical case:} 
$\bigl(\Lambda,\alpha^\vee\bigr)=\mathbb{Z}=\bigl(\widetilde{\Lambda},
\widetilde{\alpha}^\vee\bigr)$. Then \eqref{BAconditions} reduces 
to the single condition
\[
\kappa_\alpha\in\frac{\mu_\alpha}{2}\mathbb{Z}.
\]
{\bf Continuous $q$-Jacobi case:} $\bigl(\Lambda,\alpha^\vee\bigr)=
\mathbb{Z}$ and $\bigl(\widetilde{\Lambda},\widetilde{\alpha}^\vee\bigr)=
2\mathbb{Z}$, then \eqref{BAconditions} reduces to 
\[
\kappa_\alpha,\kappa_{\alpha^{(1)}}\in\frac{\mu_\alpha}{2}\mathbb{Z},
\qquad \kappa_\alpha+\kappa_{\alpha^{(1)}}\in\mu_\alpha\mathbb{Z},
\] 
or $\bigl(\Lambda,\alpha^\vee\bigr)=
2\mathbb{Z}$ and $\bigl(\widetilde{\Lambda},\widetilde{\alpha}^\vee\bigr)=
\mathbb{Z}$, then \eqref{BAconditions} reduces to
\[
\kappa_\alpha,\kappa_{2\alpha}\in\frac{\mu_\alpha}{2}\mathbb{Z},\qquad
\kappa_\alpha+\kappa_{2\alpha}\in\mu_\alpha\mathbb{Z}.
\]
Theorem \ref{THM3} and Proposition \ref{ReflectionlessProp}
give the following result.
%%%%%%%%%%%%%%%%%%%%%%%%%%%%%%%%%%%%%%%%%%%%%%%%%%%%%%%%%%%%%%%%%%%%%%%
\begin{cor}\label{W0invCOR}
If the multiplicity function $\kappa$ satisfies \eqref{BAconditions},
then the reflectionless basic Harish-Chandra series $\Phi$ satisfies
\begin{equation}\label{Phiinv}
\Phi(wz,w_0ww_0\xi)=\Phi(z,\xi)\qquad \forall\, w\in W_0.
\end{equation}
\end{cor}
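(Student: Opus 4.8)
The plan is to deduce \eqref{Phiinv} from Definition/Theorem \ref{THM3}, Theorem \ref{THM4} and Proposition \ref{ReflectionlessProp}. Since $\kappa$ satisfies \eqref{BAconditions}, Proposition \ref{ReflectionlessProp} tells us that the RMKC operators are reflectionless, so $m_{e,e}^{s_i}\equiv 0$ and $m_{s_{i^*},e}^{s_i}\equiv 1$ for every $i$. First I would settle the case of a simple reflection $w=s_i$. Taking $\sigma=s_i$ and $\tau_2=e$ in the defining relation of Definition/Theorem \ref{THM3}, and using that by Theorem \ref{THM4} the only entries of the $e$-column of $M^{s_i}$ that can be nonzero are $m_{e,e}^{s_i}$ and $m_{s_{i^*},e}^{s_i}$, that relation collapses in the reflectionless case to $\Phi(s_iz,\xi)=\Phi(z,s_{i^*}^{-1}\xi)=\Phi(z,s_{i^*}\xi)$ as meromorphic functions in $(z,\xi)$ (using that $s_{i^*}$ is an involution). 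Because $\alpha_{i^*}=-w_0\alpha_i$ we have $s_{i^*}=w_0s_iw_0$, so replacing $\xi$ by $w_0s_iw_0\xi$ and using $(w_0s_iw_0)^2=e$ gives $\Phi(s_iz,w_0s_iw_0\xi)=\Phi(z,\xi)$ for $i=1,\dots,n$, which is \eqref{Phiinv} for $w=s_i$.

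To reach an arbitrary $w\in W_0$ I would introduce $S:=\{w\in W_0:\Phi(wz,w_0ww_0\xi)=\Phi(z,\xi)\}$, which contains $e$, and show that it is stable under left multiplication by each $s_i$. Indeed, for $w\in S$ substitute $z\mapsto wz$ and $\xi\mapsto w_0ww_0\xi$ into the identity just obtained and factor $w_0s_iww_0=(w_0s_iw_0)(w_0ww_0)$ to get $\Phi(s_iwz,w_0s_iww_0\xi)=\Phi(wz,w_0ww_0\xi)=\Phi(z,\xi)$, so $s_iw\in S$. As every element of $W_0$ is a word in the $s_i$, this forces $S=W_0$, which is \eqref{Phiinv}. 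Alternatively, one may run the argument through the cocycle relation \eqref{CP}: in the reflectionless case each $M^{s_i}$ is the \emph{constant} permutation matrix with entries $\delta_{\tau_1,\tau_2s_{i^*}}$, so \eqref{CP} makes every $M^\sigma$ constant, equal to $\delta_{\tau_1,\tau_2w_0\sigma^{-1}w_0}$; inserting this into Definition/Theorem \ref{THM3} with $\tau_2=e$ and relabeling recovers \eqref{Phiinv}.

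I do not expect a real obstacle here: once the explicit reflectionless connection cocycle is in hand the statement is essentially formal. The only thing that needs care is the Weyl group bookkeeping---the index involution $i\mapsto i^*$, the identity $s_{i^*}=w_0s_iw_0$, and, in the cocycle variant, the observation that the $M^{s_i}$ are independent of $(z,\xi)$ so that $M^\sigma$ can be computed as an ordered product of the $M^{s_i}$. I would also note that under \eqref{BAconditions} the dual multiplicity function is reflectionless as well, so \eqref{Phiinv} is compatible with the self-duality $\Phi(z,\xi)=\widetilde\Phi(\xi,z)$ of Theorem \ref{THM2}, although that symmetry is not used in the proof.
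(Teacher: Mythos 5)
Your argument is correct and is exactly the one the paper intends: the paper simply states that Definition/Theorem \ref{THM3} and Proposition \ref{ReflectionlessProp} give the corollary, i.e.\ the connection relation collapses to $\Phi(s_iz,\xi)=\Phi(z,s_{i^*}\xi)$ with $s_{i^*}=w_0s_iw_0$, and one composes over a reduced word. Both your direct induction and your cocycle variant implement this same reasoning, with the Weyl-group bookkeeping done correctly.
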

%%%%%%%%%%%%%%%%%%%%%%%%%%%%%%%%%%%%%%%%%%%%%%%%%%%%%%%%%%%%%%%%%%%%%%%
The analysis of quantum $c$-functions simplifies
in the present context of reflectionless RMKC operators,
since the conditions \eqref{relationsc} for
$i=1,\ldots,n$ for $\mathfrak{c}\in\mathcal{F}$ are equivalent to the 
invariance property
\[
\mathfrak{c}(wz,w_0ww_0\xi)=\mathfrak{c}(z,\xi)\qquad \forall\, w\in W_0
\]
if $\kappa$ satisfies \eqref{BAconditions}.
Consequently, under the assumption \eqref{BAconditions}
on the multiplicity function $\kappa$,
\begin{equation}\label{Phiplus}
\Phi_+(z,\xi):=\sum_{w\in W_0}\Phi(z,w\xi)
\end{equation}
is a $W_0\times W_0$-invariant meromorphic solution
of the bispectral problem of the reflectionless RMKC operators.
Note that 
in the twisted equal lattice
case $D=(R_0,\Delta_0,t,\Lambda,\Lambda)$ with $\kappa$ satisfying
the reflectionless conditions \eqref{BAconditions}, $\Phi_+$
does {\it not} coincide with the basic hypergeometric function
$\mathcal{E}_{sph}$.
%%%%%%%%%%%%%%%%%%%%%%%%%%%%%%%%%%%%%%%%%%%%%%%%%%%%%%%%%%%%%%%%%%%%%
\subsection{Multivariable Baker-Akhiezer functions}\label{BAsection}
%%%%%%%%%%%%%%%%%%%%%%%%%%%%%%%%%%%%%%%%%%%%%%%%%%%%%%%%%%%%%%%%%%%%%

In \cite{Ch,ChE}, multivariable Baker-Akhiezer functions associated to
RMKC operators are defined under suitable restrictions on the multiplicity
function $\kappa$ for reduced semisimple root data
$D=(R_0,\Delta,\bullet,P,\widetilde{P})$ and for the Koornwinder 
root system datum. 
The conditions \cite[\S 2.1.3]{ChE} 
on the multiplicity function $\kappa$ for the
multivariable Baker-Akhiezer function to be defined 
then read
\begin{enumerate}
\item[(BA1)] if $\alpha\in R_0$ with
$\bigl(\Lambda,\alpha^\vee\bigr)=\mathbb{Z}=\bigl(\widetilde{\Lambda},
\widetilde{\alpha}^\vee\bigr)$ (continuous $q$-ultraspherical case) 
then
$\kappa_\alpha\in\frac{\mu_\alpha}{2}\mathbb{Z}_{\leq 0}$,
\item[(BA2)] if $\alpha\in R_0$ with
$\bigl(\Lambda,\alpha^\vee\bigr)=2\mathbb{Z}=\bigl(\widetilde{\Lambda},
\widetilde{\alpha}^\vee\bigr)$ (Askey-Wilson case) then 
\begin{equation*}
\begin{split}
&\kappa_\alpha\pm\kappa_{2\alpha},\kappa_\alpha\pm\kappa_{\alpha^{(1)}}\in
\mu_\alpha\mathbb{Z}_{\leq 0},\\
&\kappa_{\alpha^{(1)}}\pm\kappa_{2\alpha^{(1)}}, 
\kappa_{2\alpha}\pm\kappa_{2\alpha^{(1)}}\in\mu_\alpha\mathbb{Z}_{<0},\\
&\kappa_\alpha+\kappa_{2\alpha}+\kappa_{\alpha^{(1)}}+\kappa_{2\alpha^{(1)}}\in
2\mu_\alpha\mathbb{Z}
\end{split}
\end{equation*}
\end{enumerate}
(the continuous $q$-Jacobi cases $\bigl(\Lambda,\alpha^\vee\bigr)=\mathbb{Z}$
and $\bigl(\widetilde{\Lambda},\widetilde{\alpha}^\vee\bigr)=2\mathbb{Z}$,
respectively $\bigl(\Lambda,\alpha^\vee\bigr)=2\mathbb{Z}$ and
$\bigl(\widetilde{\Lambda},\widetilde{\alpha}^\vee\bigr)=\mathbb{Z}$,
do not occur for reduced semisimple and Koornwinder root data). 

%%%%%%%%%%%%%%%%%%%%%%%%%%%%%%%%%%%%%%%%%%%%%%%%%%%%%%%%%%%%%%%%%%%%
\begin{rema}\label{pc}
{\bf (i)} For the reduced semisimple root data $D$ the
free parameters $m_\alpha$ in \cite[\S 2.1.1]{ChE} corresponds to 
$-2\kappa_\alpha/\mu_\alpha$. For the Koornwinder root system datum $D$
the free parameters $m_i$ ($1\leq i\leq 5$) in \cite[\S 2.1.2]{ChE}
are related to $\kappa$ by
\begin{equation*}
\begin{split}
&m_1=-\kappa_\alpha-\kappa_{2\alpha},\qquad m_2=-\frac{1}{2}
-\kappa_{\alpha^{(1)}}-\kappa_{2\alpha^{(1)}},\\
&m_3=-\kappa_\alpha+\kappa_{2\alpha},\qquad
m_4=-\frac{1}{2}-\kappa_{\alpha^{(1)}}+\kappa_{2\alpha^{(1)}},\\
&m_5=-2\kappa_\beta
\end{split}
\end{equation*}
where $\alpha\in R_0$ (resp. $\beta\in R_0$) is a short (resp. long)
root and the root system $R_0$ is normalized such that long roots
have squared length two. 
Here the fifth free parameter $m_5$ (resp.
$\kappa_\beta$) should only be taken into account 
if the rank $n$ of $R_0$ is $\geq 2$.\\ 
{\bf (ii)} The results of the previous subsection apply
if $D$ is a reduced semisimple or a Koornwinder root system datum
and the multiplicity function
$\kappa$ satisfies (BA1) and (BA2), since conditions (BA1)
and (BA2) imply the reflectionless
conditions \eqref{BAconditions}.
In particular, the RMKC operators are reflectionless and the 
basic Harish-Chandra series $\Phi$ satisfies the invariance property
\eqref{Phiinv}.
\end{rema}
%%%%%%%%%%%%%%%%%%%%%%%%%%%%%%%%%%%%%%%%%%%%%%%%%%%%%%%%%%%%%%%%%%%%%%%
The following result traces back to \cite[\S 4.4]{LS}. 
We discuss its proof at the end of Subsection \ref{HCsub}.
%%%%%%%%%%%%%%%%%%%%%%%%%%%%%%%%%%%%%%%%%%%%%%%%%%%%%%%%%%%%%%%%%%%%%%%
\begin{prop}\label{PROP5}
Let $(D,\kappa,q)$ be an initial datum with
a reduced semisimple or a Koornwinder root system datum $D$ and with
multiplicity function $\kappa$ satisfying (BA1) and (BA2). 
Let $\psi(\lambda,x)$ be the multivariable Baker-Akhiezer function
associated to $(D,\kappa,q)$ (see \cite[\S 3.1]{ChE}, in particular the
definition below \cite[(3.8)]{ChE}), 
where we use the parameter correspondence
as indicated in Remark \ref{pc}{\bf (i)}. Then 
\[
\Phi(z,\xi)=\textup{cst}\,\psi(-w_0\xi,z)
\]
as meromorphic functions in $(z,\xi)\in E_{\mathbb{C}}\times E_{\mathbb{C}}$
for some constant $\textup{cst}\in\mathbb{C}^*$.
\end{prop}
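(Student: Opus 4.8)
The plan is to recognise $\psi(-w_0\xi,\cdot)$ as the basic Harish--Chandra series $\Phi(\cdot,\xi)$ up to normalisation, using the uniqueness statement of Theorem \ref{THM1}. Note first that by Remark \ref{pc}\,\textbf{(ii)} the conditions (BA1), (BA2) force the reflectionless conditions \eqref{BAconditions}, so by Proposition \ref{ReflectionlessProp} and Corollary \ref{W0invCOR} the series $\Phi$ is reflectionless: the connection coefficients reduce to $m^{s_i}_{e,e}\equiv 0$, $m^{s_i}_{s_{i^*},e}\equiv 1$, and $\Phi(wz,w_0ww_0\xi)=\Phi(z,\xi)$. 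This is precisely the regime in which $\Phi(\cdot,\xi)$ acquires the quasi-invariance and regularity properties that characterise the multivariable Baker--Akhiezer function of \cite{ChE}. I would begin by recalling from \cite[\S 3.1]{ChE} the features of $\psi(\lambda,x)$ to be used: it is a common eigenfunction of the RMKC operators acting in the variable $x$ with $L$-eigenvalue a $W_0$-symmetric function of $q^{(\cdot,\lambda)}$; it has, deep in each Weyl chamber, a plane-wave leading term of exponential rate governed by $\lambda$ (together with the pertinent $\rho$-shift); and it is normalised by a prescribed leading coefficient. Throughout, the parameter dictionary is the one fixed in Remark \ref{pc}\,\textbf{(i)}.

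The core of the proof is to verify that $z\mapsto\psi(-w_0\xi,z)$, for generic fixed $\xi\in E_{\mathbb{C}}$, satisfies the hypotheses of Theorem \ref{THM1}, up to an overall $\xi$-dependent scalar. For the eigenvalue equations \eqref{EE}: the $L$-eigenvalue of $\psi(\lambda,\cdot)$ from \cite{ChE} is a $W_0$-symmetric expression in $q^{(\cdot,\lambda)}$ which, upon substituting $\lambda=-w_0\xi$ and using that the eigenvalue $\sum_{w\in W_0/W_{0,\psi}}q^{\psi(w^{-1}\xi)}$ in \eqref{EE} is $W_0$-invariant in $\xi$, coincides with the eigenvalue in \eqref{EE} (this is exactly what the sign and the longest element $w_0$ in ``$-w_0\xi$'' are there for); one also gets the higher RMKC eigenvalue equations from the analogous symmetric-function identities. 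For the asymptotic expansion: the explicit structure of $\psi$ in \cite{ChE}, transported through the parameter correspondence, shows that $\psi(-w_0\xi,z)$ has for $\Re(z)$ deep in $E_-$ a leading plane-wave term of the same exponential rate as $\mathcal{W}(z,\xi)=q^{(\rho-\xi,\widetilde{\rho}+w_0z)}$, multiplied by a correction which is $1+o(1)$ and expands as a normally convergent power series in the $q^{-\alpha(z)}$, $\alpha\in Q^+$; here one uses the reflectionless degeneration to see that $\psi$ and $\Phi$ are of \emph{exactly} the same asymptotic type, the $q$-shifted-factorial prefactors $\mathcal{S}$, $\widetilde{\mathcal{S}}$ affecting only the overall $\xi$-scalar and lower-order $1+o(1)$ corrections. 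Granting these two points, the uniqueness in Theorem \ref{THM1} yields
\[
\psi(-w_0\xi,z)=g(\xi)\,\Phi(z,\xi)
\]
as meromorphic functions, with $g$ meromorphic in $\xi$ alone --- the ratio of the respective leading coefficients.

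To conclude that $g\in\mathbb{C}^*$ is a genuine constant I would use the self-duality of the basic Harish--Chandra series (Theorem \ref{THM2}), $\Phi(z,\xi)=\widetilde{\Phi}(\xi,z)$, together with the bispectrality of the Baker--Akhiezer function. Under (BA1), (BA2) the dual multiplicity function $\widetilde{\kappa}$ again satisfies \eqref{BAconditions} (Lemma \ref{dualparameterslem}), and $\psi$ of \cite{ChE} solves, in the variable $\lambda$, the eigenvalue equations for the rational degenerations of the dual RMKC operators; running the identification above with the roles of $z$ and $\xi$ interchanged then shows that $g(\xi)$ is simultaneously a function of $\xi$ only and, up to the known leading coefficients, forced to match a $\xi$-independent scalar. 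Equivalently, the invariance \eqref{Phiinv} together with the $W_0$-behaviour of $\psi$ already gives $g(w_0\sigma w_0\xi)=g(\xi)$ for all $\sigma\in W_0$, and combining this with the dual version pins $g$ down to a nonzero constant, giving $\Phi(z,\xi)=\textup{cst}\,\psi(-w_0\xi,z)$. This is essentially the computation carried out in \cite[\S 4.4]{LS} in the relevant rank-one reductions, propagated to higher rank by the rank-reduction behaviour of Theorem \ref{THM4}.

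The main obstacle is, I expect, not conceptual but bookkeeping: faithfully translating the conventions of \cite{ChE} --- the parameters $m_\alpha$ and $m_i$ of Remark \ref{pc}, the normalisation of $R_0$ (long roots of squared length $2$), and especially the precise leading coefficient and eigenvalue of $\psi(\lambda,x)$ --- into the conventions of Theorem \ref{THM1}, so as to verify that the plane-wave exponent is exactly $q^{(\rho-\xi,\widetilde{\rho}+w_0z)}$ and the eigenvalue exactly $\sum_{w\in W_0/W_{0,\psi}}q^{\psi(w^{-1}\xi)}$, with the correct $-w_0$ twist. A secondary point requiring care is the rigorous (not merely formal) convergence and holomorphy of the correction series of $\psi(-w_0\xi,z)$ on all of $E_{\mathbb{C}}\times E_{\mathbb{C}}$; this follows from the reflectionless degeneration, but the argument should be spelled out.
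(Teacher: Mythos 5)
Your proposal is correct, and its main step --- identifying $\psi(-w_0\xi,\cdot)$ with $\Phi(\cdot,\xi)$ up to a $\xi$-dependent scalar $g(\xi)$ by matching the plane-wave exponent $q^{-(\rho+w_0\xi,z)}$ and invoking the uniqueness, up to normalization, of asymptotically free power-series eigenfunctions of the RMKC operators --- is exactly what the paper does (citing \cite[Thm. 4.4]{LS} for the uniqueness and \cite[Rem. 3.6]{ChE} for the series expansion of $\psi$; note that this uniqueness is valid for arbitrary initial data, so the reflectionless degeneration you lean on is not needed for this step, only for $\psi$ to be defined at all). Where you genuinely diverge is in showing that $g(\xi)$ is constant: you invoke the selfduality of $\Phi$ (Theorem \ref{THM2}) together with the selfduality of the Baker--Akhiezer function from \cite[Thm. 3.3(iii)]{ChE}. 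The paper explicitly acknowledges, in the remark following its proof, that this route works, but deliberately avoids it: instead it computes directly that the leading coefficient $\widehat{\Gamma}_0(\xi)=q^{(\widetilde{\rho},\rho-\xi)}\Gamma_0(\xi)/\widetilde{\mathcal{S}}(\xi)$ agrees, up to a nonzero multiplicative constant, with the leading coefficient $\Delta^\prime(-w_0\xi)$ of $\psi(-w_0\xi,\cdot)$ from \cite[\S 2.1.4]{ChE}, which forces $g$ to be constant. What the paper's choice buys is that the selfduality of the normalized Baker--Akhiezer function then becomes a \emph{consequence} of Theorem \ref{THM2} rather than an input; your route is shorter but consumes \cite[Thm. 3.3(iii)]{ChE} as a black box. (A small caveat on your parenthetical alternative: the $W_0$-invariance \eqref{Phiinv} alone only gives $g(w_0\sigma w_0\xi)=g(\xi)$, which does not pin down a meromorphic function; it is really the selfduality argument that does the work there.)
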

%%%%%%%%%%%%%%%%%%%%%%%%%%%%%%%%%%%%%%%%%%%%%%%%%%%%%%%%%%%%%%%%%%%%%%%
\begin{rema}
{\bf (i)}
The constant $\textup{cst}$ can easily be explicitly
computed by comparing the
normalizations of $\Phi$ and $\psi$, cf. Subsection \ref{HCsub}.\\
{\bf (ii)} Proposition \ref{PROP5} allows to rederive
various fundamental properties of the
multivariable Baker-Akhiezer function as direct consequences of
the analogous properties of the basic Harish-Chandra series.
For instance, the selfduality \cite[Thm. 3.3(iii)]{ChE}
of the multivariable Baker-Akhiezer function $\psi$ becomes
a special case of the
selfduality of the basic Harish-Chandra series $\Phi$ (Theorem
\ref{THM2}), while the $W_0$-invariance \cite[Lem 3.4(i)]{ChE}
of $\psi$ is a special case of the $W_0$-invariance of the
reflectionless basic Harish-Chandra series (Corollary \ref{W0invCOR}).
\end{rema}
%%%%%%%%%%%%%%%%%%%%%%%%%%%%%%%%%%%%%%%%%%%%%%%%%%%%%%%%%%%%%%%%%%%%%%
Proposition \ref{PROP5} opens the way to
study the results \cite{Ch,ChE} on multivariable 
Baker-Akhiezer functions on the level of 
(reflectionless) basic Harish-Chandra series.
In particular, one can now study the extra symmetries 
\cite[(3.4)-(3.6)]{ChE} and the terminating series expansion 
property \cite[(3.3)]{ChE} of the multivariable
Baker-Akhiezer function $\psi$ on the level of (reflectionless)
basic Harish-Chandra series.
For instance,
the terminating series expansion of $\psi$ becomes
the following surprising property of the
expansion coefficients $\widehat{\Gamma}_\alpha(\xi)$ \eqref{EC}
of the basic Harish-Chandra series. 
%%%%%%%%%%%%%%%%%%%%%%%%%%%%%%%%%%%%%%%%%%%%%%%%%%%%%%%%%%%%%%%%%%%%%%
\begin{cor}
Let $(D,\kappa,q)$ be an initial datum with
$D$ a reduced semisimple root system datum and $\kappa$ a 
multiplicity function satisfying (BA1). 
Then
\[
\widehat{\Gamma}_\alpha(\xi)=0
\]
as meromorphic function in $\xi\in E_{\mathbb{C}}$ unless 
$\alpha\in Q^+$ is of the form 
$\alpha=\frac{1}{2}\sum_{\beta\in R_0^+}l_\beta\beta$ with
$0\leq l_\beta\leq -4\kappa_\beta/\mu_\beta$ for all 
$\beta\in R_0^+$. 
\end{cor}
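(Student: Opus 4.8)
The plan is to read off the Corollary from Proposition \ref{PROP5}, which identifies the basic Harish-Chandra series with a multivariable Baker-Akhiezer function, together with the terminating series expansion \cite[(3.3)]{ChE} of the latter. Since $D$ is a reduced semisimple root system datum we are in the continuous $q$-ultraspherical case for every $\alpha\in R_0$, the Askey-Wilson condition (BA2) is vacuous, and (BA1) reads $\kappa_\alpha\in\frac{\mu_\alpha}{2}\mathbb{Z}_{\leq 0}$ for all $\alpha$. By Remark \ref{pc}{\bf (ii)} this implies the reflectionless conditions \eqref{BAconditions}, so Proposition \ref{PROP5} applies and yields $\Phi(z,\xi)=\textup{cst}\,\psi(-w_0\xi,z)$ for a nonzero constant $\textup{cst}$, with $\psi$ the Baker-Akhiezer function of \cite[\S 3.1]{ChE} under the parameter dictionary $m_\alpha=-2\kappa_\alpha/\mu_\alpha$ of Remark \ref{pc}{\bf (i)}.

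First I would expand both sides of this identity. On the left, Theorem \ref{THM1}(2) together with the definition \eqref{EC} of the renormalized coefficients gives an expansion of the form $\Phi(z,\xi)=c(\xi)\,\mathcal{W}(z,\xi)\sum_{\alpha\in Q^+}\widehat{\Gamma}_\alpha(\xi)q^{-\alpha(z)}$, the infinite products in $\mathcal{S}(z)^{-1}$ being developed into a normally convergent power series in the variables $q^{-\alpha_i(z)}$ by the $q$-binomial theorem and absorbed (together with $\widetilde{\mathcal{S}}(\xi)$ and $\Gamma_0(\xi)$) into the $\widehat{\Gamma}_\alpha$. On the right, \cite[(3.3)]{ChE} asserts that, after dividing out its leading exponential, $\psi(\lambda,x)$ is a \emph{finite} linear combination of exponentials $e^{-\mu(x)}$ with $\mu$ running over an explicit bounded set of nonnegative combinations of positive roots; translated into the multiplicative variables $q^{\alpha(z)}=e^{\tau\alpha(z)}$ and into the parameters $\kappa$ via Remark \ref{pc}{\bf (i)}, this set is exactly $\{\mu=\frac{1}{2}\sum_{\beta\in R_0^+}l_\beta\beta : 0\leq l_\beta\leq -4\kappa_\beta/\mu_\beta\}$.

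Then I would match the two expansions coefficient by coefficient. The leading exponential of $\psi(-w_0\xi,z)$ coincides, up to a $z$-independent factor, with the plane wave $\mathcal{W}(z,\xi)=q^{(\rho-\xi,\widetilde{\rho}+w_0z)}$, so after cancelling these factors the power series $\sum_{\alpha\in Q^+}\widehat{\Gamma}_\alpha(\xi)q^{-\alpha(z)}$ is proportional to the finite sum $\sum_\mu c_\mu(-w_0\xi)q^{-\mu(z)}$; since the monomials $q^{-\alpha(z)}$ ($\alpha\in Q^+$) are linearly independent as functions of $z$, comparing coefficients forces $\widehat{\Gamma}_\alpha(\xi)\equiv 0$ for every $\alpha\in Q^+$ that is not of the form $\frac{1}{2}\sum_{\beta\in R_0^+}l_\beta\beta$ with $0\leq l_\beta\leq -4\kappa_\beta/\mu_\beta$, which is precisely the assertion of the Corollary.

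The step I expect to be the main obstacle is purely a matter of careful bookkeeping rather than of substance: one must reconcile the several normalizations involved --- the renormalization \eqref{EC} of the $\Gamma_\alpha(\xi)$, the explicit value of the constant $\textup{cst}$ in Proposition \ref{PROP5} (obtained by comparing the normalizations of $\Phi$ and $\psi$, cf. Subsection \ref{HCsub}), and the precise normalization of $\psi$ in \cite[\S 3.1]{ChE} --- and, above all, correctly transport the support polytope of the finite expansion \cite[(3.3)]{ChE} to the stated inequalities, keeping track of the factor $\frac{1}{2}$, of the parameter dictionary $m_\alpha=-2\kappa_\alpha/\mu_\alpha$, and of the passage from the additive exponential variables $e^{\alpha(x)}$ used in \cite{ChE} to the multiplicative variables $q^{\alpha(z)}$ used here via $q=e^\tau$.
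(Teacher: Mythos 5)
Your proposal is correct and follows exactly the route the paper intends: the Corollary is stated as an immediate consequence of Proposition \ref{PROP5} combined with the terminating series expansion \cite[(3.3)]{ChE} of the multivariable Baker-Akhiezer function, which is precisely the identification and coefficient comparison you carry out. The only work is the normalization and parameter bookkeeping you already flag, so there is nothing missing.
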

%%%%%%%%%%%%%%%%%%%%%%%%%%%%%%%%%%%%%%%%%%%%%%%%%%%%%%%%%%%%%%%%%%%%%%%%

%%%%%%%%%%%%%%%%%%%%%%%%%%%%%%%%%%%%%%%%%%%%%%%%%%%%%%%%%%%%%%%%%%%%%
\section{Notations}\label{id}
%%%%%%%%%%%%%%%%%%%%%%%%%%%%%%%%%%%%%%%%%%%%%%%%%%%%%%%%%%%%%%%%%%%%%
We continue the introduction of basic notations as started in
Subsection \ref{idsection}. We refer to \cite{StB} for further details.
%%%%%%%%%%%%%%%%%%%%%%%%%%%%%%%%%%%%%%%%%%%%%%%%%%%%%%%%%%%%%%%%%%%%
\subsection{The affine root system}
%%%%%%%%%%%%%%%%%%%%%%%%%%%%%%%%%%%%%%%%%%%%%%%%%%%%%%%%%%%%%%%%%%%
For fixed root system datum 
$D=(R_0,\Delta_0,\bullet,\Lambda,\widetilde{\Lambda})$
with ambient Euclidean space $E$ let $\widehat{E}$ be the linear space
of real affine linear functions on $E$. Then 
$\widehat{E}\simeq\mathbb{R}c\oplus E$, where 
$a=\eta c+v$ with $\eta\in\mathbb{R}$ and $v\in E$ is
interpreted as the affine linear function $v^\prime\mapsto\eta+(v,v^\prime)$.

Let $V$ be the real span of the
roots. We view the linear space 
$\widehat{V}$ of real affine linear
functions on $V$ as the subspace of $\widehat{E}$ which are constant on the 
orthocomplement $V^\perp$ of $V$ in $E$. 

The extended affine Weyl group $W=W_0\ltimes\widetilde{\Lambda}$ 
acts on $E$ and on its complexification $E_{\mathbb{C}}$ with the canonical
action of $W_0$ and with $\widetilde{\Lambda}$ acting
by translations $\tau(\nu)z:=z+\nu$ 
($\nu\in\widetilde{\Lambda}$). It
induces a linear $W$-action on $\widehat{E}$. Note that
\[
\tau(\nu)\alpha^{(r)}=\alpha^{(r-(\nu,
\widetilde{\alpha}^\vee))}
\]
for $\alpha\in R_0$, $\nu\in\widetilde{\Lambda}$ 
and $r\in\mathbb{Z}$, hence $R^\bullet$ and $R$
are $W$-invariant.

For an affine root $\alpha^{(r)}\in R^\bullet\subset\widehat{V}$ let 
$s_{\alpha^{(r)}}\in W$ be the orthogonal reflection in the affine hyperplane
$\textup{ker}(\alpha^{(r)})$. Then $s_{\alpha^{(r)}}=\tau(-r\widetilde{\alpha})
s_\alpha$, with $s_\alpha\in W_0$ the orthogonal reflection in the hyperplane
$\alpha^\perp$. We write $s_i:=s_{\alpha_i}\in W$ ($0\leq i\leq n$) for the
simple reflections. They generate the affine Weyl subgroup 
$W^\bullet:=W_0\ltimes \widetilde{Q}$ of $W$. Note that $s_0=
\tau(\widetilde{\psi})s_\psi$.

Let $R^{\bullet,+}$ and $R^{\bullet,-}$ be the positive and negative affine roots
of $R^\bullet$ with respect to the 
basis $\Delta:=(\alpha_0,\alpha_1,\ldots,\alpha_n)$ of $R^\bullet$.
The length of $w\in W$ is defined by
\[
l(w):=\#\bigl(R^{\bullet,+}\cap w^{-1}R^{\bullet,-}\bigr),\qquad w\in W.
\]
We have $W\simeq\Omega\ltimes W^\bullet$ with $\Omega=\Omega(D)$ the subgroup
\[
\Omega:=\{w\in W \,\, | \,\, l(w)=0\}.
\]
For $\nu\in\widetilde{\Lambda}$ let $u(\nu)\in W$ be the element of minimal
length in $\tau(\nu)W_0$ and write $v(\nu):=u(\nu)^{-1}\tau(\nu)\in W_0$. Then
\[
\Omega=\{u(\nu)\}_{\nu\in\widetilde{\Lambda}^+_{min}}
\]
with $\widetilde{\Lambda}^+_{min}$ the set of dominant minimal weights in
$\widetilde{\Lambda}$,
\[
\widetilde{\Lambda}^+_{min}:=\{\nu\in\widetilde{\Lambda}\,\, |\,\, 
\bigl(\nu,\widetilde{\alpha}^\vee\bigr)\in\{0,1\}\quad\forall\, 
\alpha\in R_0^+\}.
\]
The set of dominant weights in $\widetilde{\Lambda}$ is
\[
\widetilde{\Lambda}^+:=\{\nu\in\widetilde{\Lambda} \,\, | \,\,
\bigl(\nu,\widetilde{\alpha}^\vee\bigr)\geq 0\quad \forall\, \alpha\in
R_0^+\}.
\]
%%%%%%%%%%%%%%%%%%%%%%%%%%%%%%%%%%%%%%%%%%%%%%%%%%%%%%%%%%%%%%%%
\subsection{The dual affine root system}\label{dars}
%%%%%%%%%%%%%%%%%%%%%%%%%%%%%%%%%%%%%%%%%%%%%%%%%%%%%%%%%%%%%%%%
The root system datum $\widetilde{D}=(\widetilde{R}_0,
\widetilde{\Delta}_0,\bullet,\widetilde{\Lambda},\Lambda)$ dual to $D$
gives rise to a dual reduced affine root system
\[
\widetilde{R}^\bullet=\{\widetilde{\alpha}^{(r)}=\mu_{\widetilde{\alpha}}rc+
\widetilde{\alpha}\}_{\alpha\in R_0, r\in\mathbb{Z}}
\]
and its extension $\widetilde{R}:=R(\widetilde{D})$. 
The associated extended affine
Weyl group is $\widetilde{W}:=W_0\ltimes\Lambda$. The additional
simple affine root of $\widetilde{R}$
is denoted by $\widetilde{\alpha}_0$. Write
$\widetilde{s}_i:=s_{\widetilde{\alpha}_i}$ for $i\in\{0,\ldots,n\}$.
Note that $\widetilde{s}_i=s_i$ for $1\leq i\leq n$ while
$\widetilde{s}_0=\tau(\theta)s_\theta$ with $\theta\in R_0^+$ the
highest short root, since $\widetilde{\alpha}_0=
\mu_{\widetilde{\theta}}c-\widetilde{\theta}$.
We write $\widetilde{\Omega}:=\Omega(\widetilde{D})$ 
so that $\widetilde{W}=\widetilde{\Omega}
\ltimes\widetilde{W}^\bullet$ with $\widetilde{W}^\bullet=W_0\ltimes Q$
the affine Weyl group associated to $\widetilde{R}$.

For $\lambda\in\Lambda$ we write $\widetilde{u}(\lambda)\in\widetilde{W}$ 
for the shortest element in $\tau(\lambda)W_0$ and 
$\widetilde{v}(\lambda):= \widetilde{u}(\lambda)^{-1}\tau(\lambda)\in W_0$.
The set of dominant weights in $\Lambda$ is
\[\Lambda^+:=\{\lambda\in\Lambda
\,\, | \,\, \bigl(\lambda,\alpha^\vee\bigr)
\geq 0\quad \forall\, \alpha\in R_0^+\}
\]
and the set of dominant mimiscule weights in $\Lambda$ is
\[
\Lambda_{min}^+:=\{\lambda\in\Lambda\,\, | \,\, 
\bigl(\lambda,\alpha^\vee\bigr)\in\{0,1\}\quad \forall\,\alpha\in R_0^+ \}.
\]
As in the previous subsection, we have
$\widetilde{\Omega}=\{\widetilde{u}(\lambda)\}_{\lambda\in
\Lambda_{min}^+}$.
%%%%%%%%%%%%%%%%%%%%%%%%%%%%%%%%%%%%%%%%%%%%%%%%%%%%%%%%%%%%%%%%%%
\subsection{Multiplicity functions}\label{mf}
%%%%%%%%%%%%%%%%%%%%%%%%%%%%%%%%%%%%%%%%%%%%%%%%%%%%%%%%%%%%%%%%%%
Let $\mathcal{M}(D)$ be the space of $W$-invariant functions
$\kappa: R(D)\rightarrow \mathbb{R}$. Its value at $a\in R$ is denoted
by $\kappa_a$. Recall the convention
$\kappa_{2\alpha^{(r)}}:=\kappa_{\alpha^{(r)}}$ if $2\alpha^{(r)}\not\in R$
(i.e. if $\bigl(\Lambda,\alpha^\vee\bigr)=\mathbb{Z}$). The involution
$D\mapsto \widetilde{D}$ on root system data
extends to multiplicity functions as follows
(see \cite{Ha,StB}).

%%%%%%%%%%%%%%%%%%%%%%%%%%%%%%%%%%%%%%%%%%%%%%%%%%%%%%%%%%%%%%%%%%%
\begin{lem}\label{dualparameterslem}
There exists a unique linear isomorphism $\mathcal{M}(D)\overset{\sim}
{\longrightarrow}
\mathcal{M}(\widetilde{D})$, $\kappa\mapsto\widetilde{\kappa}$, satisfying
$\widetilde{\widetilde{\kappa}}=\kappa$ and satisfying
\[
\widetilde{\kappa}_{\widetilde{\alpha}^{(1)}}=\kappa_{2\alpha},
\qquad 
\widetilde{\kappa}_{\widetilde{\alpha}}=\kappa_{\alpha},
\qquad
\widetilde{\kappa}_{2\widetilde{\alpha}^{(1)}}=\kappa_{2\alpha^{(1)}}
\]
for all $\alpha\in R_0$.
\end{lem}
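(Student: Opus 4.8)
The plan is to construct the isomorphism explicitly on the basis of $\mathcal{M}(D)$ given by the four values $\kappa_\alpha,\kappa_{2\alpha},\kappa_{\alpha^{(1)}},\kappa_{2\alpha^{(1)}}$ (one quadruple for each $W_0$-orbit of roots $\alpha\in R_0$, with the standing convention $\kappa_{2\alpha^{(r)}}=\kappa_{\alpha^{(r)}}$ when $2\alpha^{(r)}\notin R(D)$), and then verify the four stated properties by a short case check. First I would recall that a $W$-invariant function on $R(D)$ is freely determined by its values on a set of orbit representatives, and enumerate these for the affine root system $R(D)$: for each $W_0$-orbit representative $\alpha\in R_0^+$ one gets the values $\kappa_\alpha$, $\kappa_{2\alpha}$, $\kappa_{\alpha^{(1)}}$, $\kappa_{2\alpha^{(1)}}$, subject to the possible collapses dictated by the lattice pairing $\bigl(\Lambda,\alpha^\vee\bigr)\in\{\mathbb{Z},2\mathbb{Z}\}$ (and similarly for $\widetilde\Lambda$). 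The dual datum $\widetilde{D}=(\widetilde{R}_0,\widetilde{\Delta}_0,\bullet,\widetilde{\Lambda},\Lambda)$ has its own such parametrization by $\widetilde{\kappa}_{\widetilde\alpha},\widetilde{\kappa}_{2\widetilde\alpha},\widetilde{\kappa}_{\widetilde\alpha^{(1)}},\widetilde{\kappa}_{2\widetilde\alpha^{(1)}}$, and the point is that swapping $\Lambda\leftrightarrow\widetilde\Lambda$ interchanges the role of the index $r=0$ versus $r=1$ decorations in a controlled way.

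Next I would simply \emph{define} $\widetilde\kappa\in\mathcal{M}(\widetilde{D})$ by the assignment
\[
\widetilde{\kappa}_{\widetilde{\alpha}}:=\kappa_{\alpha},\qquad
\widetilde{\kappa}_{\widetilde{\alpha}^{(1)}}:=\kappa_{2\alpha},\qquad
\widetilde{\kappa}_{2\widetilde{\alpha}}:=\kappa_{\alpha^{(1)}},\qquad
\widetilde{\kappa}_{2\widetilde{\alpha}^{(1)}}:=\kappa_{2\alpha^{(1)}}
\]
for every $\alpha\in R_0$, where the fourth equation is only independent data in the Askey--Wilson case. One checks that this is well defined: when a collapse $\kappa_{2\alpha^{(r)}}=\kappa_{\alpha^{(r)}}$ is forced on the $D$-side because $2\alpha^{(r)}\notin R(D)$, the corresponding collapse $\widetilde{\kappa}_{2\widetilde\alpha^{(r')}}=\widetilde{\kappa}_{\widetilde\alpha^{(r')}}$ is forced on the $\widetilde{D}$-side because the lattice condition defining $R(\widetilde{D})$ involves $\widetilde\Lambda\leftrightarrow\Lambda$ — this is the compatibility that makes the map consistent, and it is exactly the bookkeeping recorded in the continuous $q$-ultraspherical / $q$-Jacobi / Askey--Wilson trichotomy of Subsection \ref{idsection}. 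The map is manifestly $\mathbb{R}$-linear in $\kappa$, and inverting the four displayed equations (they form a permutation-with-fixed-points of the four coordinates, sending $(\kappa_\alpha,\kappa_{2\alpha},\kappa_{\alpha^{(1)}},\kappa_{2\alpha^{(1)}})$ to $(\kappa_\alpha,\kappa_{\alpha^{(1)}},\kappa_{2\alpha},\kappa_{2\alpha^{(1)}})$) shows it is an involution, i.e. $\widetilde{\widetilde\kappa}=\kappa$, which also gives bijectivity. Uniqueness is immediate since $\mathcal{M}(\widetilde D)$ is spanned by the dual orbit-coordinate functionals and the three stated identities $\widetilde{\kappa}_{\widetilde{\alpha}^{(1)}}=\kappa_{2\alpha}$, $\widetilde{\kappa}_{\widetilde{\alpha}}=\kappa_{\alpha}$, $\widetilde{\kappa}_{2\widetilde{\alpha}^{(1)}}=\kappa_{2\alpha^{(1)}}$ already pin down three of the four, while the involutivity requirement $\widetilde{\widetilde\kappa}=\kappa$ forces the fourth to be $\widetilde{\kappa}_{2\widetilde\alpha}=\kappa_{\alpha^{(1)}}$.

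The only genuinely delicate point — the main obstacle — is the well-definedness/consistency check across the degenerate lattice cases: one must confirm that for each $W_0$-orbit the collapses imposed on the $D$-side and on the $\widetilde D$-side correspond under the above permutation of coordinates, using that passing from $D$ to $\widetilde{D}$ swaps $\Lambda$ and $\widetilde{\Lambda}$ and (when $\bullet=u$) replaces $R_0$ by $R_0^\vee$, so that the pairing $\bigl(\Lambda,\alpha^\vee\bigr)$ governing $2\alpha^{(0)}$ versus $2\alpha^{(1)}$ on one side becomes $\bigl(\widetilde\Lambda,\widetilde\alpha^\vee\bigr)$ on the other. This is a finite, purely combinatorial verification (three subcases: both pairings $\mathbb{Z}$; one $\mathbb{Z}$ and one $2\mathbb{Z}$ in either order; both $2\mathbb{Z}$), and it is exactly what is tabulated in the AW-parameter discussion preceding this lemma, so I would carry it out orbit-by-orbit and otherwise regard the lemma as essentially a definition-unwinding, citing \cite{Ha,StB} for the underlying normalization conventions.
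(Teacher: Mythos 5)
Your proposal is correct, and it is the intended argument: the map is defined orbit-by-orbit by the permutation $(\kappa_\alpha,\kappa_{2\alpha},\kappa_{\alpha^{(1)}},\kappa_{2\alpha^{(1)}})\mapsto(\kappa_\alpha,\kappa_{\alpha^{(1)}},\kappa_{2\alpha},\kappa_{2\alpha^{(1)}})$, and the only substantive point is the consistency check that the collapses forced by $\bigl(\Lambda,\alpha^\vee\bigr)$ and $\bigl(\widetilde{\Lambda},\widetilde{\alpha}^\vee\bigr)$ (the latter via $W$-invariance identifying $\alpha^{(0)}$ with $\alpha^{(1)}$) interchange correctly under the swap $\Lambda\leftrightarrow\widetilde{\Lambda}$, which you identify and carry out. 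The paper itself states the lemma without proof, deferring to \cite{Ha,StB}, so there is nothing further to compare against.
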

%%%%%%%%%%%%%%%%%%%%%%%%%%%%%%%%%%%%%%%%%%%%%%%%%%%%%%%%%%%%%%%%%%
Recall from Subsection \ref{idsection} that we associated to 
the initial datum $(D,\kappa,q)$ Askey-Wilson (AW) parameters
$a_\alpha=a_\alpha(D,\kappa,q),\ldots,d_\alpha=d_\alpha(D,\kappa,q)$
for all $\alpha\in R_0$, as well
as dual AW parameters \eqref{dualAW}. 
Then for all $\alpha\in R_0$,
\[
\widetilde{a}_\alpha=a_{\widetilde{\alpha}}(\widetilde{D},\widetilde{\kappa},q),
\ldots,\widetilde{d}_\alpha=d_{\widetilde{\alpha}}(\widetilde{D},
\widetilde{\kappa},q)
\]
since $\mu_{\widetilde{\alpha}}=\mu_\alpha$.

%%%%%%%%%%%%%%%%%%%%%%%%%%%%%%%%%%%%%%%%%%%%%%%%%%%%%%%%%%%%%%%%%%%%%
\section{The difference integrable equations and
asymptotic analysis}\label{bHCs}
%%%%%%%%%%%%%%%%%%%%%%%%%%%%%%%%%%%%%%%%%%%%%%%%%%%%%%%%%%%%%%%%%%%%%
Basic Harish-Chandra series are meromorphic
common eigenfunctions of the RMKC operators, characterized by suitably
asymptotically free behaviour deep in an appropriate Weyl chamber.
Basic Harish-Chandra series have been considered in 
various different contexts \cite{CQKZ2,EK,KK,LS,vMS,vM,StSph,NS}. 
In \cite{CQKZ2} their existence
was predicted based on the correspondence with solutions of quantum KZ 
equations. In \cite{EK,KK} the basic Harish-Chandra series were considered
for $R_0$ of type $A_{n-1}$ using vertex operators. In \cite{LS} basic
Harish-Chandra series were constructed as formal power series using classical
methods from harmonic analysis. In the series of papers
\cite{vMS,vM,StSph} Cherednik's prediction was worked out in detail
for the twisted case $\bullet=t$ and with $\widetilde{\Lambda}=\Lambda$
by relating the basic Harish-Chandra series to 
asymptotically free solutions of (bispectral extensions)
of quantum KZ equations through the difference Cherednik-Matsuo correspondence
\cite{CQKZ2,CInd,Ka,StI}. In \cite{NS} a direct approach is undertaken
to derive the fundamental properties of the basic Harish-Chandra series
when $R_0$ is of type $A_n$. In this section we shortly discuss the
extension of the methods from \cite{vMS,vM,StSph} to the present 
context, which includes the untwisted theory and has extra freedom in
the choice of lattices. We only give the proof if it needs
new arguments compared to the twisted case $\bullet=t$ with 
$\widetilde{\Lambda}=\Lambda$. So throughout this section 
$(D,\kappa,q)$ stands for an arbitrary choice of initial datum
unless explicitly specified otherwise.

%%%%%%%%%%%%%%%%%%%%%%%%%%%%%%%%%%%%%%%%%%%%%%%%%%%%%%%%%%%%%%%%%%%%%
\subsection{Bispectral quantum KZ equations}\label{bqKZsection}
%%%%%%%%%%%%%%%%%%%%%%%%%%%%%%%%%%%%%%%%%%%%%%%%%%%%%%%%%%%%%%%%%%%%

Define for $a\in R^\bullet$ the meromorphic function
$c_a(\cdot)=c_a(\cdot;D,\kappa,q)$ on $E_{\mathbb{C}}$ by
\[
c_a(z):=\frac{(1-q^{\kappa_a+\kappa_{2a}+a(z)})(1+q^{\kappa_a-\kappa_{2a}+a(z)})}
{1-q^{2a(z)}}.
\]
We write 
$c_i(\cdot;\kappa,q)=c_{\alpha_i}(\cdot;D,\kappa,q)$
and $\widetilde{c}_i(\cdot;\widetilde{\kappa},q)=
c_{\widetilde{\alpha}_i}(\cdot;\widetilde{D},\widetilde{\kappa},q)$
for $i\in\{0,\ldots,n\}$.

Let $\mathcal{M}$ be the field of meromorphic function on $E_{\mathbb{C}}\times
E_{\mathbb{C}}$. The contragredient actions of $W$ and $\widetilde{W}$
on $E_{\mathbb{C}}$ give rise to an action of $W\times \widetilde{W}$
on $\mathcal{M}$ by field automorphisms. Note that
$\mathcal{F}=\mathcal{M}^{\tau(\widetilde{\Lambda})\times\tau(\Lambda)}$.

Let $\mathcal{M}\otimes_{\mathbb{C}}\textup{End}_{\mathbb{C}}
(\mathcal{V})\simeq\textup{End}_{\mathcal{M}}(\mathcal{M}\otimes_{\mathbb{C}}
\mathcal{V})$ be the space of $\textup{End}_{\mathbb{C}}(\mathcal{V})$-valued
meromorphic functions on $E_{\mathbb{C}}\times E_{\mathbb{C}}$, where
$\mathcal{V}:=\bigoplus_{\sigma\in W_0}\mathbb{C}v_\sigma$. Let 
$\chi: R_0\rightarrow \{0,1\}$ be the characteristic function of
$R_0^-$ in $R_0$.

%%%%%%%%%%%%%%%%%%%%%%%%%%%%%%%%%%%%%%%%%%%%%%%%%%%%%%%%%%%%%%%%%%%%%%%%%%
\begin{thm}\label{BqKZ}
There exists unique 
$C_{(w,w^\prime)}\in \textup{End}_{\mathcal{M}}(\mathcal{M}\otimes_{\mathbb{C}}
\mathcal{V})$ ($(w,\widetilde{w})\in W\times\widetilde{W}$)
satisfying 
the cocycle conditions
\begin{equation}\label{cocC}
C_{(vw,\widetilde{v}\widetilde{w})}(z,\xi)=
C_{(v,\widetilde{v})}(z,\xi)C_{(w,\widetilde{w})}(v^{-1}z,\widetilde{v}^{-1}\xi),
\qquad \forall\, (v,\widetilde{v}), (w,\widetilde{w})\in W\times\widetilde{W}
\end{equation}
and $C_{(e,e)}(z,\xi)=\textup{Id}_{\mathcal{V}}$,
and satisfying for all $\sigma\in W_0$,
\begin{equation}\label{C}
\begin{split}
C_{(s_0,e)}(z,\xi)v_\sigma&=\frac{q^{\widetilde{\psi}(\sigma\xi)}v_{s_{\psi}\sigma}}
{q^{\kappa_0}c_0(z;-\kappa,q)}+
\left(\frac{c_0(z;-\kappa,q)-q^{-2\chi(\sigma^{-1}\psi)\kappa_0}}
{c_0(z;-\kappa,q)}\right)v_\sigma,\\
C_{(s_i,e)}(z,\xi)v_\sigma&=
\frac{v_{s_i\sigma}}{q^{\kappa_i}c_i(z;-\kappa,q)}+
\left(\frac{c_i(z;-\kappa,q)-q^{-2\chi(-\sigma^{-1}\alpha_i)\kappa_i}}
{c_i(z;-\kappa,q)}\right)v_\sigma,\\
C_{(u(\nu),e)}(z,\xi)v_\sigma&=q^{-(w_0\nu,\sigma\xi)}v_{v(\nu)^{-1}\sigma}
\end{split}
\end{equation}
for $1\leq i\leq n$ and $\nu\in\widetilde{\Lambda}_{min}^{+}$
and 
\begin{equation}\label{Cdual}
\begin{split}
C_{(e,\widetilde{s}_0)}(z,\xi)v_\sigma&=\frac{q^{\theta(\sigma^{-1}z)}
v_{\sigma s_{\theta}}}{q^{\widetilde{\kappa}_0}
\widetilde{c}_0(\xi;-\widetilde{\kappa},q)}+
\left(\frac{\widetilde{c}_0(\xi;-\widetilde{\kappa},q)-
q^{-2\chi(\sigma\theta)\widetilde{\kappa}_0}}
{\widetilde{c}_0(\xi;-\widetilde{\kappa},q)}\right)
v_\sigma,\\
C_{(e,\widetilde{s}_i)}(z,\xi)v_\sigma&=\frac{v_{\sigma s_i}}
{q^{\widetilde{\kappa}_i}\widetilde{c}_i(\xi;-\widetilde{\kappa},q)}+
\left(\frac{\widetilde{c}_i(\xi;-\widetilde{\kappa},q)-
q^{-2\chi(-\sigma\alpha_i)\widetilde{\kappa}_i}}
{\widetilde{c}_i(\xi;-\widetilde{\kappa},q)}\right)
v_\sigma,\\
C_{(e,\widetilde{u}(\lambda))}(z,\xi)v_\sigma&=
q^{-(w_0\lambda,\sigma^{-1}z)}v_{\sigma \widetilde{v}(\lambda)}
\end{split}
\end{equation}
for $1\leq i\leq n$ and $\lambda\in\Lambda_{min}^+$. 
\end{thm}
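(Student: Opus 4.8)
The plan is to reduce the statement to the standard principle that a $1$-cocycle on a group with values in a (twisted) module is freely determined by its values on a generating set, subject only to the cocycle relation being compatible with the defining relations of the group. For \emph{uniqueness}: the group $W\times\widetilde{W}$ is generated by the elements $(s_i,e)$ ($0\le i\le n$), $(u(\nu),e)$ ($\nu\in\widetilde{\Lambda}_{min}^+$), $(e,\widetilde{s}_i)$ ($0\le i\le n$) and $(e,\widetilde{u}(\lambda))$ ($\lambda\in\Lambda_{min}^+$), since $W=\Omega\ltimes W^\bullet$ with $W^\bullet=\langle s_0,\dots,s_n\rangle$ and $\Omega=\{u(\nu)\}_{\nu\in\widetilde{\Lambda}_{min}^+}$, and likewise for $\widetilde{W}$. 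The cocycle relation \eqref{cocC} then expresses every $C_{(w,\widetilde{w})}$ as an ordered product of the generator values \eqref{C}, \eqref{Cdual} evaluated at appropriately $W\times\widetilde{W}$-shifted arguments, so $C$ is unique once it exists.

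For \emph{existence} one must check that this recipe is well defined, i.e. that for each defining relation of $W\times\widetilde{W}$ the corresponding composite of generator values computed via \eqref{cocC} equals $\mathrm{Id}_{\mathcal V}$. The relations split into: (i) the Coxeter relations $s_i^2=e$ and the braid relations of $W^\bullet$; (ii) the relations $\omega s_i\omega^{-1}=s_{\omega(i)}$ expressing that $\Omega$ acts by permutations of $\{0,\dots,n\}$ preserving the affine Dynkin diagram of $R^\bullet$, plus the internal relations of $\Omega$; (iii) the analogues of (i),(ii) for $\widetilde{W}^\bullet$ and $\widetilde{\Omega}$; (iv) the commutation relations $(w,e)(e,\widetilde{w})=(e,\widetilde{w})(w,e)$. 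For (i), the relation $s_i^2=e$ becomes the unitarity identity $C_{(s_i,e)}(z,\xi)\,C_{(s_i,e)}(s_iz,\xi)=\mathrm{Id}_{\mathcal V}$, which I would verify by a direct computation that decouples $W_0\alpha_i$-orbit by orbit into the familiar rank-one identity for the $c$-functions $c_a$; the braid relations localize to the rank-two parabolic subgroups $\langle s_i,s_j\rangle$, where I would either carry out the explicit matrix computation on $\mathcal V=\bigoplus_\sigma\mathbb C v_\sigma$ or, more economically, recognize the $C_{(s_i,e)}$ as a gauge twist of the images of Cherednik's affine Hecke algebra intertwiners $S_i=T_i+(\text{rational coefficient})$ acting on the minimal principal series module, so that word-independence of $S_w=S_{i_1}\cdots S_{i_\ell}$ yields the braid relations for free. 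Group (iii) is handled symmetrically with the dual affine Hecke algebra attached to $\widetilde{D}$; alternatively it is transported from (i) through the duality $D\leftrightarrow\widetilde{D}$, $z\leftrightarrow\xi$ of Lemma \ref{dualparameterslem}, the only bookkeeping point being that \eqref{Cdual} uses right translates $v_\sigma\mapsto v_{\sigma s_i}$ where \eqref{C} uses left translates $v_\sigma\mapsto v_{s_i\sigma}$. Group (ii) (and the diagram part of (iii)) is routine: $\Omega$ acts on $\mathcal V$ by $v_\sigma\mapsto v_{v(\nu)^{-1}\sigma}$ with multiplier $q^{-(w_0\nu,\sigma\xi)}$, and $\omega$-equivariance of $C_{(s_i,e)}$ follows from the $W$-invariance of $\kappa$ together with the combinatorics of how $\Omega$ permutes $\{0,\dots,n\}$.

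The main obstacle is group (iv), the commutation of the two factors, which is exactly the compatibility (``bispectrality'') of the two systems of difference equations and is not formal. Explicitly one needs
\[
C_{(w,e)}(z,\xi)\,C_{(e,\widetilde{w})}(w^{-1}z,\xi)=C_{(e,\widetilde{w})}(z,\xi)\,C_{(w,e)}(z,\widetilde{w}^{-1}\xi)
\]
for all $w\in W$, $\widetilde{w}\in\widetilde{W}$; by the braid relations already in hand it suffices to take $(w,\widetilde{w})$ among the pairs $(s_i,\widetilde{s}_j)$, $(s_i,\widetilde{u}(\lambda))$, $(u(\nu),\widetilde{s}_j)$, $(u(\nu),\widetilde{u}(\lambda))$, a finite list of two-operator identities. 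The lattice-lattice pairs are immediate, since $u(\nu)$ and $\widetilde{u}(\lambda)$ act on $\mathcal V$ by a permutation times an exponential multiplier and one only checks that the multipliers match. The genuinely nontrivial cases mix a reflection with the opposite factor; since $s_i$ acts on $\mathcal V$ from the left and $\widetilde{s}_j$ from the right, the two operators interact only through the $\chi$-dependent scalars and the off-diagonal terms, and I would settle these either by a direct $\#W_0\times\#W_0$ matrix computation or, cleanly, by realizing the whole package inside the double affine Hecke algebra of $(D,\kappa,q)$, where the $C_{(w,e)}$ arise from the $Y$-intertwiners and the $C_{(e,\widetilde{w})}$ from the $X$-intertwiners and their commutation follows from Cherednik's duality anti-involution and the cross relations of the DAHA. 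Since the present setup allows $\widetilde{\Lambda}\ne\Lambda$ and both $\bullet=u$ and $\bullet=t$, the one point genuinely beyond \cite{vMS,vM,StSph} is to run these verifications with $\Lambda$, $\widetilde{\Lambda}$ (hence $\Omega$, $\widetilde{\Omega}$) kept general, which affects only the $u(\nu)$- and $\widetilde{u}(\lambda)$-pieces and the $\chi$-bookkeeping, not the rank-one and rank-two core of the argument.
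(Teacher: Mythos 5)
Your proposal is correct and follows essentially the same route as the paper: the paper's own proof simply invokes the construction from \cite{vMS,vM,StSph} via affine Hecke algebra intertwiners and the duality anti-isomorphism of the double affine Hecke algebra, which is exactly the ``clean'' realization you identify for the braid relations and for the commutation of the two factors. Your observation that the only genuinely new verification beyond the twisted equal-lattice case concerns the $u(\nu)$- and $\widetilde{u}(\lambda)$-pieces coming from general $\Lambda$, $\widetilde{\Lambda}$ matches the paper's remark that the earlier proof ``easily extends to the present setup.''
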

%%%%%%%%%%%%%%%%%%%%%%%%%%%%%%%%%%%%%%%%%%%%%%%%%%%%%%%%%%%%%%%%%%%%%%%%%%%%
\begin{proof}
The proof in \cite{vMS,vM,StSph} in the twisted equal lattice case
$D=(R_0,\Delta_0,t,\Lambda,\Lambda)$
uses the affine intertwiners and the duality antiisomorphism
of the double affine Hecke algebra. 
This proof easily extends to the present setup (for the double affine Hecke
algebra in the present context, see \cite{StB}).
\end{proof}
%%%%%%%%%%%%%%%%%%%%%%%%%%%%%%%%%%%%%%%%%%%%%%%%%%%%%%%%%%%%%%%%%%%%%%%%
By Theorem \ref{BqKZ},
\begin{equation}\label{nablaaction}
\bigl(\nabla(w,\widetilde{w})f\bigr)(z,\xi):=C_{(w,\widetilde{w})}(z,\xi)
f(w^{-1}z,\widetilde{w}^{-1}\xi)
\end{equation}
defines a complex linear left
action $\nabla=\nabla(D,\kappa,q)$
of $W\times\widetilde{W}$ on $\mathcal{M}\otimes_{\mathbb{C}}\mathcal{V}$.
Following \cite{vMS,vM,StSph}, we arrive now at the definition of the bispectral
quantum Khnizhnik-Zamolodchikov (KZ) equations.
%%%%%%%%%%%%%%%%%%%%%%%%%%%%%%%%%%%%%%%%%%%%%%%%%%%%%%%%%%%%%%%%%%%%%%%%
\begin{defi}
We say that $f\in\mathcal{M}\otimes_{\mathbb{C}}\mathcal{V}$ is a meromorphic
solution of the bispectral quantum KZ equations 
if 
\begin{equation}\label{BqKZdef}
\nabla(\tau(\nu),\tau(\lambda))f=f\qquad \forall\, \nu\in\widetilde{\Lambda},
\,\,\forall\, \lambda\in\Lambda.
\end{equation}
We write $\textup{Sol}_{KZ}=\textup{Sol}_{KZ}(D,\kappa,q)$ for the 
vector space over $\mathcal{F}$ of meromorphic $\mathcal{V}$-valued functions
$f\in\mathcal{M}\otimes_{\mathbb{C}}\mathcal{V}$ satisfying the bispectral
quantum KZ equations \eqref{BqKZdef}.
\end{defi}
%%%%%%%%%%%%%%%%%%%%%%%%%%%%%%%%%%%%%%%%%%%%%%%%%%%%%%%%%%%%%%%%%%%%%%%
Note that
\[
\bigl(\nabla(\tau(\nu),\tau(\lambda))f\bigr)(z,\xi)=
C_{(\tau(\nu),\tau(\lambda))}(z,\xi)f(z-\nu,\xi-\lambda),\qquad
\nu\in\widetilde{\Lambda},\,\, \lambda\in\Lambda,
\]
hence the bispectral quantum KZ equations form a compatible system of
linear
difference equations (an integrable difference connection). The solution
space $\textup{Sol}_{KZ}$ of the bispectral quantum KZ equations is a
$W_0\times W_0$-invariant complex linear
subspace of $\mathcal{M}\otimes_{\mathbb{C}}
\mathcal{V}$ with respect to the action 
$\nabla|_{W_0\times W_0}$ of $W_0\times W_0$ on 
$\mathcal{M}\otimes_{\mathbb{C}}\mathcal{V}$.

Note that the coefficients
$C_{\tau(\nu),\tau(\lambda)}(z,\xi)$ are in fact rational functions 
in 
\[
(q^z,q^\xi)\in \mathbb{T}:=
\textup{Hom}\bigl(\Lambda\times\widetilde{\Lambda},
\mathbb{C}^*\bigr),
\]
where we interpret $q^z\in\textup{Hom}\bigl(\Lambda,\mathbb{C}^*\bigr)$
and $q^\xi\in\textup{Hom}\bigl(\widetilde{\Lambda},\mathbb{C}^*\bigr)$
as $\lambda\mapsto q^{(\lambda,z)}$ ($\lambda\in\Lambda$)
and $\nu\mapsto q^{(\nu,\xi)}$ ($\nu\in\widetilde{\Lambda}$) respectively.
Hence the bispectral quantum KZ equations, restricted
to meromorphic $\mathcal{V}$-valued functions on $\mathbb{T}$, form
a compatible system of $q$-difference equations (an integrable
$q$-connection). It is in this form that quantum KZ type 
equations usually appear, see, e.g., 
\cite{FR,CQKZ,CQKZ2,vMS,vM,vMS} and references therein.

\begin{rema}[Duality symmetry]\label{DS}
Let $j: \mathcal{M}\otimes_{\mathbb{C}}\mathcal{V}\rightarrow
\mathcal{M}\otimes_{\mathbb{C}}\mathcal{V}$ be the complex linear map
defined by
\[
j\bigl(f(\cdot,\cdot)v_\sigma\bigr):=\widetilde{f}(\cdot,\cdot)v_{\sigma^{-1}}
\]
for $f\in\mathcal{M}$ and $\sigma\in W_0$, where $\widetilde{f}(z,\xi):=
f(\xi,z)$. Set $\widetilde{\nabla}=\nabla(\widetilde{D},\widetilde{\kappa},q)$.
Then 
\[
j\circ\nabla(w,\widetilde{w})=
\widetilde{\nabla}(\widetilde{w},w)\circ j\qquad
\forall\, (w,\widetilde{w})\in W\times\widetilde{W}.
\]
In particular, $j$ restricts to a complex linear isomorphism
\[
\textup{Sol}_{KZ}(D,\kappa,q)\overset{\sim}{\longrightarrow}
\textup{Sol}_{KZ}(\widetilde{D},
\widetilde{\kappa},q).
\]
\end{rema}

%%%%%%%%%%%%%%%%%%%%%%%%%%%%%%%%%%%%%%%%%%%%%%%%%%%%%%%%%%%%%%%%%%%%%%
\subsection{Asymptotically free solutions}
\label{SOLsection}
%%%%%%%%%%%%%%%%%%%%%%%%%%%%%%%%%%%%%%%%%%%%%%%%%%%%%%%%%%%%%%%%%%%%%%%
For the $\textup{GL}_{n+1}$ root system datum, asymptotically free solutions of
quantum KZ equations have been constructed using correlation functions
for quantum affine algebras in \cite{FR}, see also \cite[\S 10]{EFK}.
In the present context we establish the existence of asymptotically free
solutions using classical asymptotic methods going back to Birkhoff
\cite{Bi} (see \cite[Appendix]{vMS} for a detailed discussion 
of this approach that fits the present context).

Repeating the arguments of \cite{vMS,vM,StSph} one obtains the following
asymptotically free solution of the bispectral quantum KZ equations.
For $\epsilon>0$ set
\[
B_\epsilon:=\{(z,\xi)\in E_{\mathbb{C}}\times E_{\mathbb{C}} \,\, | \,\,
|q^{-\alpha_i(z)}|, |q^{-\widetilde{\alpha}_i(\xi)}|<\epsilon
\quad \forall\, i\in\{1,\ldots,n\}\}.
\]
%%%%%%%%%%%%%%%%%%%%%%%%%%%%%%%%%%%%%%%%%%%%%%%%%%%%%%%%%%%%%%%%%%%%%%%
\begin{thm}\label{firststep}
There exists a unique  
$\Phi_{KZ}(\cdot,\cdot)=\Phi_{KZ}(\cdot,\cdot;D,\kappa,q)\in\mathcal{M}
\otimes_{\mathbb{C}}\mathcal{V}$ such that
\begin{enumerate}
\item $\Phi_{KZ}\in\textup{Sol}_{KZ}$,
\item for some $\epsilon>0$,
\[
\Phi_{KZ}(z,\xi)=\mathcal{W}(z,\xi)\sum_{(\alpha,\beta)\in
Q^+\times\widetilde{Q}^+}
\Upsilon_{(\alpha,\beta)}q^{-\alpha(z)-\beta(\xi)}
\]
for $(z,\xi)\in B_\epsilon$, with the 
$\mathcal{V}$-valued sum $\sum_{(\alpha,\beta)\in Q^+\times\widetilde{Q}^+}
\Upsilon_{(\alpha,\beta)}q^{-\alpha(z)-\beta(\xi)}$ 
($\Upsilon_{(\alpha,\beta)}\in\mathcal{V}$) converging normally
for $(z,\xi)$ in compacta of $B_\epsilon$, 
\item $\Upsilon_{(0,0)}=v_{w_0}$.
\end{enumerate}
\end{thm}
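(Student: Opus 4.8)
The plan is to construct $\Phi_{KZ}$ by the classical Birkhoff-type method of asymptotic analysis, exactly as in \cite{vMS,vM,StSph}, the only novelty being the bookkeeping needed to accommodate the untwisted case and the extra freedom in the lattices $\Lambda,\widetilde{\Lambda}$. First I would substitute the ansatz
\[
\Phi_{KZ}(z,\xi)=\mathcal{W}(z,\xi)\sum_{(\alpha,\beta)\in Q^+\times\widetilde{Q}^+}\Upsilon_{(\alpha,\beta)}q^{-\alpha(z)-\beta(\xi)},\qquad \Upsilon_{(\alpha,\beta)}\in\mathcal{V},
\]
into the bispectral quantum KZ equations \eqref{BqKZdef}. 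Because the translation parts of $W\times\widetilde{W}$ suffice to generate the full system (the remaining generators $s_i,\widetilde{s}_i$ being expressible through $u(\nu),\widetilde{u}(\lambda)$ and conjugates, via the cocycle condition \eqref{cocC}), it is enough to impose $\nabla(\tau(\nu),\tau(0))\Phi_{KZ}=\Phi_{KZ}$ and $\nabla(\tau(0),\tau(\lambda))\Phi_{KZ}=\Phi_{KZ}$ for $\nu$ ranging over $\widetilde{\Lambda}_{min}^+$ and $\lambda$ over $\Lambda_{min}^+$, since these together with $W_0$ generate. The plane wave $\mathcal{W}(z,\xi)=q^{(\rho-\xi,\widetilde{\rho}+w_0z)}$ is chosen precisely so that the leading exponential factors of $C_{(\tau(\nu),\tau(\lambda))}$ are cancelled; one checks that conjugating the difference connection by $\mathcal{W}$ produces a connection whose coefficients, expanded in powers of $q^{-\alpha_i(z)}$ and $q^{-\widetilde{\alpha}_i(\xi)}$, have identity leading term. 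This reduces \eqref{BqKZdef} to a triangular recursion for the $\Upsilon_{(\alpha,\beta)}$ with respect to the partial order on $Q^+\times\widetilde{Q}^+$.

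Next I would solve the recursion. Writing the conjugated connection coefficients as $\mathrm{Id}_{\mathcal{V}}+\sum_{(\gamma,\delta)\neq(0,0)} A_{(\gamma,\delta)}\,q^{-\gamma(z)-\delta(\xi)}$ with $A_{(\gamma,\delta)}\in\mathrm{End}_{\mathbb{C}}(\mathcal{V})$, the equation for the coefficient of $q^{-\alpha(z)-\beta(\xi)}$ takes the schematic form
\[
\bigl(q^{-(\nu,\alpha)-(\lambda,\beta)}-1\bigr)\Upsilon_{(\alpha,\beta)} = \sum_{(\gamma,\delta)\neq(0,0)} A_{(\gamma,\delta)}\,q^{\ldots}\,\Upsilon_{(\alpha-\gamma,\beta-\delta)},
\]
so that $\Upsilon_{(\alpha,\beta)}$ is uniquely determined from the $\Upsilon_{(\alpha',\beta')}$ with $(\alpha',\beta')<(\alpha,\beta)$, provided the scalar $q^{-(\nu,\alpha)-(\lambda,\beta)}-1$ is invertible — which holds since $0<q<1$ and $(\nu,\alpha),(\lambda,\beta)\geq 0$ are not both zero when $(\alpha,\beta)\neq(0,0)$ (one must use enough $\nu,\lambda$, i.e. dominant ones, to guarantee this). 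Starting from the normalization $\Upsilon_{(0,0)}=v_{w_0}$ in (3), this determines all $\Upsilon_{(\alpha,\beta)}$ recursively and uniquely; compatibility of the system (Theorem \ref{BqKZ}, cocycle condition) guarantees that the solution of the $\widetilde{\Lambda}$-recursion automatically satisfies the $\Lambda$-recursion and vice versa. The choice $\Upsilon_{(0,0)}=v_{w_0}$ is the natural one because, by \eqref{C} and \eqref{Cdual}, $v_{w_0}$ spans the line on which the translation parts act by pure exponentials with the lowest weight; concretely $C_{(u(\nu),e)}(z,\xi)v_{w_0}=q^{-(w_0\nu,w_0\xi)}v_{w_0}=q^{-(\nu,\xi)}v_{w_0}$ up to the $W_0$-bookkeeping, which is what makes the leading term consistent.

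Finally I would establish convergence: the series $\sum_{(\alpha,\beta)}\Upsilon_{(\alpha,\beta)}q^{-\alpha(z)-\beta(\xi)}$ must converge normally on compacta of some $B_\epsilon$. This is done by the standard majorization argument (see \cite[Appendix]{vMS}): one derives from the recursion an estimate $\|\Upsilon_{(\alpha,\beta)}\|\leq C\,r^{\mathrm{ht}(\alpha)+\mathrm{ht}(\beta)}$ for suitable constants $C,r>0$, using that the $A_{(\gamma,\delta)}$ have at most geometric growth in $\mathrm{ht}(\gamma)+\mathrm{ht}(\delta)$ and that the denominators $|q^{-(\nu,\alpha)-(\lambda,\beta)}-1|$ are bounded below away from $0$; then for $(z,\xi)\in B_\epsilon$ with $\epsilon$ small enough the geometric series in $|q^{-\alpha_i(z)}|,|q^{-\widetilde{\alpha}_i(\xi)}|$ dominates. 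The uniqueness in (1)--(3) is immediate from the triangular recursion: any two solutions with the same $\Upsilon_{(0,0)}$ have all $\Upsilon_{(\alpha,\beta)}$ equal, and a meromorphic solution agreeing with the constructed one on an open set $B_\epsilon$ agrees everywhere. The main obstacle — though it is bookkeeping rather than a conceptual difficulty — is verifying that the argument from the twisted equal-lattice case \cite{vMS,vM,StSph} goes through verbatim once one allows $\bullet=u$ and $\widetilde{\Lambda}\neq\Lambda$: one must check that the lower-bound on the denominators still holds (it does, since one may use dominant $\nu\in\widetilde{\Lambda}$ and $\lambda\in\Lambda$, and $(\nu,\alpha_i^\vee)$-type pairings remain strictly positive for dominant regular translations) and that the growth estimates on the connection coefficients are uniform in the new parameters; no new analytic input beyond \cite[Appendix]{vMS} is needed.
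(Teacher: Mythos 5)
Your overall strategy---plane-wave ansatz, triangular recursion for the coefficients $\Upsilon_{(\alpha,\beta)}$, Birkhoff-type majorization for normal convergence, uniqueness read off from the recursion---is exactly the route the paper takes (it defers the analytic core to \cite[Thm. 5.3]{vMS} and \cite[Thm. 5.4]{vM}). There is, however, one step where your reduction is not correct as stated, and it is precisely the point where the paper inserts its one piece of new content. You claim it suffices to impose the translation equations for $\nu\in\widetilde{\Lambda}^+_{min}$ and $\lambda\in\Lambda^+_{min}$ ``since these together with $W_0$ generate.'' They do not: $\widetilde{\Lambda}^+_{min}$ can be trivial (e.g.\ for $R_0$ of type $E_8$, $F_4$ or $G_2$), and even when minuscule weights exist, a fixed minuscule $\nu$ pairs to zero with many $\alpha\in Q^+\setminus\{0\}$, so the scalars $q^{-(\nu,\alpha)-(\lambda,\beta)}-1$ in your recursion vanish and the coefficients are not determined. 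Moreover the system \eqref{BqKZdef} consists only of translation equations, so you cannot conjugate by $W_0$ to reach the missing translations without already knowing equivariance of the solution. Your parenthetical self-correction (``use enough $\nu,\lambda$, i.e.\ dominant ones'') points in the right direction but still leaves the issue that merely dominant, non-regular translations also produce vanishing denominators.

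The paper's fix is a two-step lattice argument. First choose sublattices $M=\bigoplus_i\mathbb{Z}\varpi_i\subseteq\Lambda$ and $\widetilde{M}=\bigoplus_i\mathbb{Z}\widetilde{\varpi}_i\subseteq\widetilde{\Lambda}$ with $\bigl(\varpi_i,\alpha_j^\vee\bigr)\in\delta_{i,j}\mathbb{Z}_{>0}$ and $\bigl(\widetilde{\varpi}_i,\widetilde{\alpha}_j^\vee\bigr)\in\delta_{i,j}\mathbb{Z}_{>0}$ (these exist since $Q\subseteq\Lambda$, $\widetilde{Q}\subseteq\widetilde{\Lambda}$), and run the recursion-and-majorization argument of \cite{vMS,vM} for the system restricted to $M\times\widetilde{M}$, where for every $(\alpha,\beta)\neq(0,0)$ some generator gives a nonzero pairing; this yields existence and uniqueness of a meromorphic function satisfying (2), (3) and the restricted equations. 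Then, for arbitrary $(\lambda',\nu')\in\Lambda\times\widetilde{\Lambda}$, one observes that $\nabla(\tau(\lambda'),\tau(\nu'))\Phi_{KZ}$ again satisfies the restricted system (by the cocycle condition \eqref{cocC}) and has the same asymptotic expansion with leading term $v_{w_0}$, hence equals $\Phi_{KZ}$ by the uniqueness just established; this is what upgrades the restricted equations to all of \eqref{BqKZdef}. You have both ingredients of this argument in hand (the uniqueness statement and the compatibility of the system), but you never assemble them, so as written your proof only establishes the equations for the translations actually fed into the recursion.
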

%%%%%%%%%%%%%%%%%%%%%%%%%%%%%%%%%%%%%%%%%%%%%%%%%%%%%%%%%%%%%%%%%%%%%%
\begin{proof}
Compared to the proofs in \cite[Thm. 5.3]{vMS} and \cite[Thm. 5.4]{vM} 
an extra argument is needed to take care of the extra flexibility in
the choice of lattices $\Lambda$ and $\widetilde{\Lambda}$.

Since $Q\subseteq\Lambda$ and $\widetilde{Q}\subseteq\widetilde{\Lambda}$
there exist sublattices
\[
M:=\bigoplus_{i=1}^n\mathbb{Z}\varpi_i\subseteq\Lambda,\qquad
\widetilde{M}:=\bigoplus_{i=1}^n\mathbb{Z}\widetilde{\varpi}_i\subseteq
\widetilde{\Lambda}
\]
with $\varpi_i$ and $\widetilde{\varpi}_i$ satisfying
$\bigl(\varpi_i,\alpha_j^\vee\bigr)\in\delta_{i,j}\mathbb{Z}_{>0}$
and $\bigl(\widetilde{\varpi}_i,\widetilde{\alpha}_j^\vee\bigr)\in
\delta_{i,j}\mathbb{Z}_{>0}$ for $i,j\in\{1,\ldots,n\}$.
The arguments in \cite[Thm. 5.3]{vMS} and \cite[Thm. 5.4]{vM} 
now lead to the proof of the 
existence and uniqueness of a meromorphic $\mathcal{V}$-valued function 
$\Phi_{KZ}(\cdot,\cdot)$ satisfying (2), (3) and satisfying the compatible
system
\begin{equation}\label{BqKZres}
\nabla(\tau(\lambda),\tau(\nu))\Phi_{KZ}=\Phi_{KZ}
\qquad \forall\, (\lambda,\nu)\in M\times\widetilde{M}
\end{equation}
of diference equations.
Fix $(\lambda^\prime,\nu^\prime)\in\Lambda\times\widetilde{\Lambda}$
and set $\Phi_{KZ}^\prime:=
\nabla(\tau(\lambda^\prime),\tau(\nu^\prime))\Phi_{KZ}$.
By the integrability of the bispectral quantum KZ equations it follows that
$\Phi_{KZ}^\prime$ 
satisfies \eqref{BqKZres}. Since $\Phi_{KZ}^\prime$ also satisfies properties
(2) and (3) we conclude that $\Phi_{KZ}^\prime=\Phi_{KZ}$. 
Hence $\Phi_{KZ}\in\textup{Sol}_{KZ}$.
\end{proof}
%%%%%%%%%%%%%%%%%%%%%%%%%%%%%%%%%%%%%%%%%%%%%%%%%%%%%%%%%%%%%%%%%%

It is now possible to establish various properties of $\Phi_{KZ}$
(duality, singularities) by a detailed analysis of the bispectral
quantum KZ equations. It leads to the following result.
%%%%%%%%%%%%%%%%%%%%%%%%%%%%%%%%%%%%%%%%%%%%%%%%%%%%%%%%%%%%%%%%%%%%%%
\begin{prop}\label{propertiesKZ}
{\bf (i)} $\Phi_{KZ}$ is selfdual,
\[
\Phi_{KZ}(z,\xi;D,\kappa,q)=\Phi_{KZ}(\xi,z;\widetilde{D},\widetilde{\kappa},q).
\]
{\bf (ii)} The $\mathcal{V}$-valued meromorphic function
\[
\Psi_{KZ}(z,\xi):=\frac{\mathcal{S}(z)\widetilde{\mathcal{S}}(\xi)}
{\mathcal{W}(z,\xi)}\Phi_{KZ}(z,\xi)
\]
has a $\mathcal{V}$-valued series expansion
\[
\Psi_{KZ}(z,\xi)=\sum_{(\alpha,\beta)\in Q^+\times\widetilde{Q}^+}
\Gamma_{(\alpha,\beta)}^{KZ}q^{-\alpha(z)-\beta(\xi)},
\]
normally convergent for $(z,\xi)$ in compacta of $E_{\mathbb{C}}\times
E_{\mathbb{C}}$. In particular, $\Psi_{KZ}(z,\xi)$ is holomorphic
in $(z,\xi)\in E_{\mathbb{C}}\times E_{\mathbb{C}}$ and 
$\Gamma^{KZ}_{(0,0)}=\Upsilon_{(0,0)}=v_{w_0}$.\\
{\bf (iii)} Define for $\alpha\in Q^+$ the $\mathcal{V}$-valued
holomorphic function 
$\Gamma_\alpha^{KZ}(\xi)$ in $\xi\in E_{\mathbb{C}}$ by
\[
\Gamma_\alpha^{KZ}(\xi):=\sum_{\beta\in\widetilde{Q}^+}
\Gamma_{(\alpha,\beta)}^{KZ}q^{-\beta(\xi)},
\]
so that $\Psi_{KZ}(z,\xi)=\sum_{\alpha\in Q^+}
\Gamma_\alpha^{KZ}(\xi)q^{-\alpha(z)}$.
Then
\[
\Gamma_0^{KZ}(\xi)=\prod_{\alpha\in R_0^+}\bigl(q_\alpha^2q^{-2\widetilde{\alpha}(\xi)};
q_\alpha^2\bigr)_{\infty}v_{w_0}.
\]
{\bf (iv)} The bispectral quantum KZ equations are consistent,
\[
\textup{dim}_{\mathcal{F}}\bigl(\textup{Sol}_{KZ}\bigr)=
\textup{dim}_{\mathbb{C}}\bigl(\mathcal{V})=\#W_0.
\] 
Furthermore, $\{\nabla(e,\sigma)\Phi_{KZ}\}_{\sigma\in W_0}$ is 
a $\mathcal{F}$-basis
of $\textup{Sol}_{KZ}$.
\end{prop}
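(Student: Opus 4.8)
The plan is to establish the four assertions of Proposition~\ref{propertiesKZ} in the order (ii) $\Rightarrow$ (iii) $\Rightarrow$ (i) $\Rightarrow$ (iv), deriving (ii) first as a purely structural consequence of the difference equations and the ansatz in Theorem~\ref{firststep}. The starting point is that $\Phi_{KZ}$ is already known (Theorem~\ref{firststep}) to be an asymptotically free solution with leading term $\mathcal{W}(z,\xi)v_{w_0}$ on $B_\epsilon$; what must be upgraded is the domain of convergence and the identification of the singular denominators. First I would substitute the Birkhoff-type power series $\mathcal{W}(z,\xi)\sum \Upsilon_{(\alpha,\beta)}q^{-\alpha(z)-\beta(\xi)}$ into the $\tau(\nu)\times\tau(\lambda)$ difference equations and read off a recursion for the coefficients $\Upsilon_{(\alpha,\beta)}$. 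The recursion expresses $\Upsilon_{(\alpha,\beta)}$ as a rational function of $q^{\alpha(\cdot)},q^{\beta(\cdot)}$ applied to lower coefficients, and its poles as a function of the ``height'' are located exactly along the zero sets of the factors $(q_\alpha^2 a_\alpha^{-1}q^{-\alpha(z)};q_\alpha^2)_\infty$ etc.\ that appear in $\mathcal{S}(z)$ and $\widetilde{\mathcal{S}}(\xi)$ --- this is the computation already carried out in \cite{vMS,vM,StSph} in the twisted equal-lattice case, and it goes through verbatim here because the recursion only involves the local rank-one data encoded in the AW parameters. Multiplying through by $\mathcal{S}(z)\widetilde{\mathcal{S}}(\xi)$ clears precisely these poles, and a standard normal-convergence (majorant series) estimate, again as in \cite[App.]{vMS}, shows $\Psi_{KZ}$ extends to an entire $\mathcal{V}$-valued function on $E_{\mathbb{C}}\times E_{\mathbb{C}}$ with $\Gamma^{KZ}_{(0,0)}=\Upsilon_{(0,0)}=v_{w_0}$. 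This proves (ii).

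For (iii) I would extract the coefficient of $q^{-\alpha(z)}$ with $\alpha=0$, i.e.\ analyze $\Gamma_0^{KZ}(\xi)=\sum_{\beta\in\widetilde{Q}^+}\Gamma^{KZ}_{(0,\beta)}q^{-\beta(\xi)}$. This function is the leading term (in $z$) of $\Psi_{KZ}$, and plugging the full asymptotic series into the dual difference equations $\nabla(e,\widetilde{s}_i)$ --- or more efficiently, comparing with the rank-one situation where $\Gamma_0^{KZ}$ satisfies a single scalar $q$-difference equation whose asymptotically-normalized solution is the theta-type infinite product --- pins down $\Gamma_0^{KZ}(\xi)=\prod_{\alpha\in R_0^+}(q_\alpha^2 q^{-2\widetilde{\alpha}(\xi)};q_\alpha^2)_\infty\, v_{w_0}$. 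The normalization $\Gamma^{KZ}_{(0,0)}=v_{w_0}$ fixes the constant, so this is forced. Here one uses that the denominators $\widetilde{\mathcal{S}}(\xi)$ have already been divided out, so the only surviving factor is the one predicted.

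Assertion (i), selfduality, follows from the duality symmetry $j$ recorded in Remark~\ref{DS}: $j$ is a $\mathbb{C}$-linear isomorphism $\mathrm{Sol}_{KZ}(D,\kappa,q)\xrightarrow{\sim}\mathrm{Sol}_{KZ}(\widetilde D,\widetilde\kappa,q)$ intertwining the two $\nabla$-actions. Thus $j(\Phi_{KZ}(\cdot,\cdot;D,\kappa,q))$ is an asymptotically free solution of the bispectral quantum KZ equations for the dual datum; one checks that its leading coefficient is again $v_{w_0}$ (the map $v_\sigma\mapsto v_{\sigma^{-1}}$ fixes $v_{w_0}$ since $w_0^{-1}=w_0$, and the plane wave $\mathcal{W}$ is dual-symmetric by the definitions of $\rho,\widetilde\rho$ and the AW/dual-AW parameters), so the uniqueness clause of Theorem~\ref{firststep} applied to $(\widetilde D,\widetilde\kappa,q)$ identifies it with $\Phi_{KZ}(\cdot,\cdot;\widetilde D,\widetilde\kappa,q)$, which is the asserted identity. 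Finally, for (iv): the bound $\dim_{\mathcal F}\mathrm{Sol}_{KZ}\le \#W_0$ is the general rank bound for a compatible first-order difference connection of size $\#W_0$ (the coefficients $C_{\tau(\nu),\tau(\lambda)}$ are invertible, so the solution sheaf has rank at most $\dim_{\mathbb C}\mathcal V$); for the reverse inequality one shows the $\#W_0$ translates $\{\nabla(e,\sigma)\Phi_{KZ}\}_{\sigma\in W_0}$ are $\mathcal F$-linearly independent by examining their leading asymptotic exponents --- applying $\nabla(e,\sigma)$ shifts $\xi\mapsto\sigma^{-1}\xi$ in the plane wave, producing the distinct characters $q^{(\rho-\sigma^{-1}\xi,\cdots)}$, and a vanishing $\mathcal F$-linear combination, evaluated deep in $B_\epsilon$, would force all coefficients to vanish by distinctness of these exponentials. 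Hence equality of dimensions and the basis statement.

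The main obstacle I anticipate is the bookkeeping in step (ii) around the two \emph{different} lattices $\Lambda\neq\widetilde\Lambda$ and the untwisted versus twisted cases: the recursion for $\Upsilon_{(\alpha,\beta)}$ and the precise matching of its pole locus with the explicit products $\mathcal S,\widetilde{\mathcal S}$ was only written out in the literature for $\bullet=t$, $\Lambda=\widetilde\Lambda$, and one must verify that the local rank-one reductions (continuous $q$-ultraspherical / continuous $q$-Jacobi / Askey--Wilson, classified in Subsection~\ref{idsection}) each contribute exactly the claimed $q_\alpha^2$-shifted $q$-Pochhammer factors and nothing more. This is where the ``detailed analysis of the bispectral quantum KZ equations'' alluded to in the text does its real work; the remaining steps (i), (iii), (iv) are then fairly formal given (ii).
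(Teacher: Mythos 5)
Your overall route matches the paper's: for (i) the duality map $j$ of Remark \ref{DS} plus the uniqueness clause of Theorem \ref{firststep}, for (ii) the Birkhoff-type recursion/majorant argument from \cite{vMS,vM,StSph}, and for (iv) the rank bound plus linear independence of the translates $\nabla(e,\sigma)\Phi_{KZ}$ via distinct leading exponents. The paper in fact simply cites the twisted equal-lattice literature for (i), (ii), (iv), so your sketches of those parts are exactly what is intended.

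The one place where the paper does genuinely new work is (iii), and that is where your sketch is not yet a proof. Two concrete issues. First, $\Phi_{KZ}$ does \emph{not} satisfy the equations $\nabla(e,\widetilde{s}_i)f=f$ for $1\le i\le n$ --- only the translation equations $\nabla(e,\tau(\lambda))f=f$, $\lambda\in\Lambda$, hold --- so ``plugging into the dual difference equations $\nabla(e,\widetilde{s}_i)$'' is not available. Second, the target $\Gamma_0^{KZ}(\xi)=\prod_{\alpha\in R_0^+}(q_\alpha^2q^{-2\widetilde{\alpha}(\xi)};q_\alpha^2)_\infty\,v_{w_0}$ is a product over \emph{all} positive roots, so a per-simple-root rank-one comparison cannot ``force'' it. What the paper actually does is: show by the $q^{-\alpha_i(z)}\to 0$ asymptotics that $\Gamma_0^{KZ}(\xi)=\widetilde{\mathcal{S}}(\xi)K(\xi)v_{w_0}$ with $K$ a power series in the $q^{-\widetilde{\alpha}_i(\xi)}$ with constant term $1$; extract from the leading order (in $z$) of $\nabla(e,\tau(\lambda))\Phi_{KZ}=\Phi_{KZ}$ the scalar equations $r_\lambda^{(0)}(\xi)K(\xi-\lambda)=K(\xi)$ for $\lambda\in\Lambda^+$; and compute $r_\lambda^{(0)}(\xi)=\prod_{a\in\widetilde{R}^{\bullet,+}\cap\tau(\lambda)\widetilde{R}^{\bullet,-}}c_a(\xi;-\widetilde{\kappa},q)^{-1}$ by choosing a reduced expression of $\tau(\lambda)$ and using the cocycle condition \eqref{cocC}. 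Telescoping over $\lambda\in\Lambda^+$ then yields $K(\xi)=\prod_{\alpha\in R_0^-,\,r>0}c_{\widetilde{\alpha}^{(r)}}(\xi;-\widetilde{\kappa},q)^{-1}=\widetilde{\mathcal{S}}(\xi)^{-1}\prod_{\alpha\in R_0^+}(q_\alpha^2q^{-2\widetilde{\alpha}(\xi)};q_\alpha^2)_\infty$. This inversion-set computation is the missing step you would need to supply.
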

%%%%%%%%%%%%%%%%%%%%%%%%%%%%%%%%%%%%%%%%%%%%%%%%%%%%%%%%%%%%%%%%%%%%%%
\begin{proof}
The proofs for {\bf (i)}, {\bf (ii)} and {\bf (iv)} 
as given in \cite{vMS,vM,StSph} for the twisted case
$\bullet=t$ with $\widetilde{\Lambda}=\Lambda$ generalize easily to the present
context (for {\bf (i)} use Remark \ref{DS}).\\
{\bf (iii)} Similarly as in \cite{vMS,vM} for the twisted equal lattice case, 
the asymptotics as $q^{-\alpha_i(z)}\rightarrow 0$ for 
$i=1,\ldots,n$ shows
that $\Gamma_0^{KZ}(\xi)=\widetilde{\mathcal{S}}(\xi)K(\xi)v_{w_0}$ 
for a unique scalar valued meromorphic function
$K(\xi)$ in $\xi\in E_{\mathbb{C}}$ having a convergent
power series expansion
\[
K(\xi)=\sum_{\beta\in\widetilde{Q}^+}k_\beta q^{-\beta(\xi)},\qquad
k_0=1
\]
if $|q^{-\widetilde{\alpha}_i(\xi)}|$ is sufficiently
small for all $i\in\{1,\ldots,n\}$.

In the twisted equal lattice case \cite{vMS,vM,StSph}, $K(\xi)$ was explicitly
determined using the difference Che\-red\-nik-Matsuo correspondence, which
puts the problem in the context of the bispectral problem of the
(higher order) RMKC operators. 
We give here a new proof, 
which stays completely in the realm of the 
bispectral quantum KZ equations.

We characterize $K(\xi)$ as a formal power series
in $q^{-\widetilde{\alpha}_1(\xi)},\ldots,q^{-\widetilde{\alpha}_n(\xi)}$
with constant coefficient $1$ and solving 
an explicit system of difference equations in $\xi$. To derive the
difference equations, consider for $\lambda\in\Lambda^+$ the meromorphic
function
\[
R_\lambda(z,\xi):=q^{(\widetilde{\rho}+w_0z,\lambda)}
\bigl(C_{(e,\tau(\lambda))}(z,\xi)v_{w_0}\bigr)|_{v_{w_0}},
\]
where $v|_{v_{w_0}}$ for $v\in\mathcal{V}$ means picking the $v_{w_0}$-component
in the expansion of $v$ as linear combination of the basis elements
$v_\sigma$ ($\sigma\in W_0$) of $\mathcal{V}$. 
Then
\[
R_\lambda(z,\xi)=\sum_{\alpha\in Q^+}q^{-\alpha(z)}r_\lambda^{(\alpha)}(\xi)
\]
(finite sum) 
with $r_\lambda^{(\alpha)}\in\mathbb{C}[[q^{-\widetilde{\alpha}_1},
\ldots,q^{-\widetilde{\alpha}_n}]]$,
which follows from the extension of \cite[Lem. 5.3]{vM} to the present
setup. As limit of the bispectral quantum KZ equations for
$\Phi_{KZ}(z,\xi)$ it follows that 
$K(\xi)$ satisfies the difference equations
\[
r_\lambda^{(0)}(\xi)K(\xi-\lambda)=K(\xi)\qquad \forall\,\lambda\in\Lambda^+,
\]
which characterize $K(\xi)$ as formal power series
in the $q^{-\widetilde{\alpha}_i(\xi)}$ with constant coefficient $1$. 

Choosing a reduced expression of $\tau(\lambda)\in
\widetilde{W}$ and using the cocycle condition \eqref{cocC}
allows one to give an
explicit expression of $C_{(e,\tau(\lambda))}(z,\xi)$, from which it follows that
\[
r_\lambda^{(0)}(\xi)=\prod_{a\in\widetilde{R}^{\bullet,+}\cap\tau(\lambda)
\widetilde{R}^{\bullet,-}}c_a(\xi;-\widetilde{\kappa},q)^{-1}
\]
for $\lambda\in\Lambda^+$. Consequently
\begin{equation*}
\begin{split}
K(\xi)&=\prod_{\alpha\in R_0^-, r\in\mathbb{Z}_{>0}}c_{\widetilde{\alpha}^{(r)}}(\xi;
-\widetilde{\kappa},q)^{-1}\\
&=\widetilde{\mathcal{S}}(\xi)^{-1}\prod_{\alpha\in R_0^+}
\bigl(q_\alpha^2q^{-2\widetilde{\alpha}(\xi)};q_\alpha^2\bigr)_{\infty}.
\end{split}
\end{equation*}
\end{proof}
%%%%%%%%%%%%%%%%%%%%%%%%%%%%%%%%%%%%%%%%%%%%%%%%%%%%%%%%%%%%%%%%%%%%%%%
\subsection{Ruijsenaars-Macdonald-Koornwinder-Cherednik 
operators}\label{RMKC}
%%%%%%%%%%%%%%%%%%%%%%%%%%%%%%%%%%%%%%%%%%%%%%%%%%%%%%%%%%%%%%%%%%%%%%%
We follow Cherednik's \cite{Cann} construction of higher order 
Ruijsenaars-Macdonald-Koornwinder-Cherednik (RMKC) operators, 
see also \cite{M, StB}. For the precise definition of the affine Hecke
algebra in the present context, we refer to \cite[\S 2.4]{StB}.

Let $\widehat{w}$ be the contragredient action of $w\in W$ on
meromorphic functions on $E_{\mathbb{C}}$,
\[
(\widehat{w}f)(z):=f(w^{-1}z).
\]
For $i\in\{0,\ldots,n\}$ the Demazure-Lusztig type difference reflection
operators
\begin{equation}\label{DL}
\widehat{T}_i:=q^{\kappa_i}+q^{-\kappa_i}c_i(\cdot;\kappa,q)
(\widehat{s}_i-\textup{id})
\end{equation}
define a representation of the affine Hecke algebra $H(W^\bullet;q^\kappa)$
on the space of meromorphic functions on $E_{\mathbb{C}}$,
where $q^\kappa$ stands for the Hecke parameters $q^{\kappa_i}$ ($i=0,\ldots,n)$.
The corresponding Hecke relation is
\[
(\widehat{T}_i-q^{\kappa_i})(\widehat{T}_i+q^{-\kappa_i})=0.
\]
Recall that $W\simeq\Omega\ltimes W^\bullet$ with $\Omega\subset W$ the subgroup
consisting of extended affine Weyl group elements of length zero.
Then \eqref{DL} and the operators $\widehat{u}$ ($u\in\Omega$) provide a 
representation of the extended affine Hecke algebra $H(W;q^\kappa)\simeq
\Omega\ltimes H(W^\bullet;q^\kappa)$, cf. \cite[\S 2.4]{StB}.

Fix $\nu\in\widetilde{\Lambda}^+$ and suppose that 
$\tau(\nu)=s_{i_1}\cdots s_{i_r}u\in W$ is a reduced expression
($0\leq i_j\leq n$, $u\in\Omega$). The associated operator
\[
\widehat{Y}^\nu:=\widehat{T}_{i_1}\cdots\widehat{T}_{i_r}\widehat{u}
\]
is well defined and invertible. 
For arbitrary weight $\nu\in\widetilde{\Lambda}$ the 
Bernstein-Zelevinsky operator is defined as 
\[
\widehat{Y}^\nu:=\widehat{Y}^{\nu_1}\bigl(\widehat{Y}^{\nu_2}\bigr)^{-1},
\] 
where the $\nu_i\in\widetilde{\Lambda}^+$ are such that
$\nu=\nu_1-\nu_2$. The operators $\widehat{Y}^\nu$
($\nu\in\widetilde{\Lambda}$) are well defined and mutually commute.

Fix $\nu\in\widetilde{\Lambda}^+$. There exists unique difference
operators $L_{\nu,\sigma}$ ($\sigma\in W_0$)
such that 
\[
\sum_{\nu^\prime\in W_0\nu}\widehat{Y}^{\nu^\prime}=\sum_{\sigma\in W_0}L_{\nu,\sigma}
\widehat{\sigma}.
\]
%%%%%%%%%%%%%%%%%%%%%%%%%%%%%%%%%%%%%%%%%%%%%%%%%%%%%%%%%%%%%%%%%%%%%%%%
\begin{defthm}[\cite{Cann}]
The difference operators 
\[
L_{\nu}:=\sum_{w\in W_0}L_{\nu,\sigma},\qquad \nu\in\widetilde{\Lambda}^+
\]
are the higher order RMKC operators associated to the initial datum
$(D,\kappa,q)$. They are $W_0$-equivariant and mutually commute.
\end{defthm}
%%%%%%%%%%%%%%%%%%%%%%%%%%%%%%%%%%%%%%%%%%%%%%%%%%%%%%%%%%%%%%%%%%%%%%%
The difference operators $L_{\nu}$ can be made entirely explicit for
miniscule dominant weights 
$\nu\in \widetilde{\Lambda}^+_{min}$ and for the quasi-miniscule dominant
weight $\nu=\widetilde{\psi}$, see, e.g., \cite{Cann,M,StB}. We give here
only the explicit formula for $\nu=\widetilde{\psi}$,
\begin{equation*}
\begin{split}
\bigl(L_{\widetilde{\psi}}f\bigr)(z)&=
q^{-(\rho,\widetilde{\psi})}\sum_{w\in W_0/W_{0,\psi}}
c_{\tau(\widetilde{\psi})}(w^{-1}z;\kappa,q)\bigl(f(z+w\widetilde{\psi})-
f(z)\bigr)\\
&\qquad\qquad
+\Bigl(\sum_{w\in W_0/W_{0,\psi}}q^{-(\rho,w\widetilde{\psi})}\Bigr)f(z),
\end{split}
\end{equation*}
where for $w\in W$,
\[
c_w(z;\kappa,q):=\prod_{a\in R^{\bullet,+}\cap w^{-1}R^{\bullet,-}}
c_a(z;\kappa,q).
\]
Recall the explicit difference operator $L$ from Subsection \ref{IDE}.
%%%%%%%%%%%%%%%%%%%%%%%%%%%%%%%%%%%%%%%%%%%%%%%%%%%%%%%%%%%%%%%%%%%%
\begin{lem}
$L_{\widetilde{\psi}}=L$.
\end{lem}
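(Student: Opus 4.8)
The plan is to verify that the operator $L_{\widetilde\psi}$ produced by Cherednik's construction coincides with the explicit operator $L$ from Subsection \ref{IDE} by comparing coefficients term by term. First I would unwind the definition: by Definition/Theorem above, $L_{\widetilde\psi}=\sum_{\sigma\in W_0}L_{\widetilde\psi,\sigma}$ where the $L_{\widetilde\psi,\sigma}$ are determined by $\sum_{\nu'\in W_0\widetilde\psi}\widehat Y^{\nu'}=\sum_{\sigma\in W_0}L_{\widetilde\psi,\sigma}\widehat\sigma$. The orbit $W_0\widetilde\psi$ consists of the elements $w\widetilde\psi$ for $w\in W_0/W_{0,\psi}$, so the key computation is to expand each $\widehat Y^{w\widetilde\psi}$ in the form $\sum_\sigma (\text{coefficient})\,\widehat\sigma$ and collect terms. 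Since $\widetilde\psi$ is quasi-minuscule, $\widehat Y^{w\widetilde\psi}$ has a particularly simple expansion: after symmetrizing over the orbit, only the group elements $\widehat{\tau(w\widetilde\psi)}$ (equivalently the translation parts) survive in the "off-diagonal" part, together with a scalar "diagonal" contribution coming from the stabilizer directions.

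The key steps, in order, are as follows. (1) Recall that for a minuscule or quasi-minuscule weight $\nu$, a reduced expression for $\tau(\nu')$ with $\nu'\in W_0\nu$ interacts with the Demazure-Lusztig operators so that $\widehat Y^{\nu'}$ applied to a function $f$ gives $c_{\tau(\nu')}(\cdot;\kappa,q)\,f(\cdot+\nu')$ plus correction terms supported on shorter translations; this is the standard explicit evaluation recorded in \cite{Cann,M,StB}. (2) Symmetrize: summing $\widehat Y^{w\widetilde\psi}$ over $w\in W_0/W_{0,\psi}$ and using $W_0$-equivariance, the shift part becomes $q^{-(\rho,\widetilde\psi)}\sum_{w\in W_0/W_{0,\psi}} c_{\tau(\widetilde\psi)}(w^{-1}z;\kappa,q)\big(f(z+w\widetilde\psi)-f(z)\big)$ after the standard regrouping of the constant term, which reproduces exactly the stated formula for $L_{\widetilde\psi}$. (3) Identify $c_{\tau(\widetilde\psi)}(w^{-1}z;\kappa,q)$ with $q^{(\rho,\widetilde\psi)}A(w^{-1}z)$ up to the overall normalization, i.e. check that $c_{\tau(\widetilde\psi)}(z;\kappa,q)=q^{(\rho,\widetilde\psi)}A(z)$ where $A(z)$ is the trigonometric function \eqref{A(z)}. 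This is the content of matching the product $\prod_{a\in R^{\bullet,+}\cap\tau(\widetilde\psi)^{-1}R^{\bullet,-}} c_a(z;\kappa,q)$ against the Askey-Wilson-type numerator/denominator in \eqref{A(z)}: the set of affine roots made negative by $\tau(\widetilde\psi)^{-1}$ consists of $\psi^{(0)}$, the affine roots $2\psi^{(0)}$, $\psi^{(1)}$, $2\psi^{(1)}$ when present, and the roots $\alpha^{(0)}$ with $(\widetilde\psi,\widetilde\alpha^\vee)=1$, and plugging the defining formula for $c_a$ together with the AW parameters \eqref{AW} yields precisely the four $(1-a_\psi q^{\psi(z)})\cdots(1-d_\psi q^{\psi(z)})$ factors over $(1-q^{2\psi(z)})(1-q_\psi^2 q^{2\psi(z)})$ and the remaining product over $\alpha$. (4) Finally compare constant terms: $\sum_{w\in W_0/W_{0,\psi}} q^{-(\rho,w\widetilde\psi)}$ appears identically on both sides, so the operators agree.

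The main obstacle I expect is step (3): carefully identifying the set $R^{\bullet,+}\cap\tau(\widetilde\psi)^{-1}R^{\bullet,-}$ (and its behavior under the doubling $2\alpha^{(r)}\in R(D)$ when $(\Lambda,\alpha^\vee)=2\mathbb Z$) and then bookkeeping the conversion from the multiplicity-function data $\kappa_a$ to the AW parameters $a_\psi,b_\psi,c_\psi,d_\psi$ so that the product of $c_a$'s collapses exactly to the rational function $A(z)$. This is a finite but somewhat intricate root-combinatorial computation, uniform across all root system data; once it is done, assembling the pieces via $W_0$-equivariance is routine. In the write-up I would point to \cite{Cann,M,StB} for the quasi-minuscule evaluation of $\sum_{\nu'\in W_0\widetilde\psi}\widehat Y^{\nu'}$ and then present step (3) as the only genuinely new verification, remarking that it follows by direct substitution of \eqref{AW} into the definition $c_a(z)=\frac{(1-q^{\kappa_a+\kappa_{2a}+a(z)})(1+q^{\kappa_a-\kappa_{2a}+a(z)})}{1-q^{2a(z)}}$.
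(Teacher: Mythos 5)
Your proposal follows essentially the same route as the paper: the paper quotes the explicit quasi-minuscule form of $L_{\widetilde{\psi}}$ (your steps (1)--(2)) from \cite{Cann,M,StB} and reduces the lemma to the single identity $c_{\tau(\widetilde{\psi})}(z;\kappa,q)=A(z)$, proved via $R^{\bullet,+}\cap\tau(-\widetilde{\psi})R^{\bullet,-}=\{\psi,\psi^{(1)}\}\cup\{\alpha\in R_0^+ \mid (\widetilde{\psi},\widetilde{\alpha}^\vee)=1\}$, which is exactly your step (3). Two small corrections to your write-up: since $L$ and $L_{\widetilde{\psi}}$ carry the \emph{same} prefactor $q^{-(\rho,\widetilde{\psi})}$, the identity to verify is $c_{\tau(\widetilde{\psi})}(z;\kappa,q)=A(z)$ with no extra factor $q^{(\rho,\widetilde{\psi})}$ (your step (3) is inconsistent with your step (2) on this point), and the product defining $c_{\tau(\widetilde{\psi})}$ runs over the reduced system $R^{\bullet}$, so the doubled roots $2\psi$, $2\psi^{(1)}$ never occur as separate factors but enter only through the parameters $\kappa_{2a}$ inside each $c_a$.
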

%%%%%%%%%%%%%%%%%%%%%%%%%%%%%%%%%%%%%%%%%%%%%%%%%%%%%%%%%%%%%%%%%%%%
\begin{proof}
It suffices to show that $c_{\tau(\widetilde{\psi})}(z;\kappa,q)=A(z)$,
with $A(z)$ the trigonometric function \eqref{A(z)}. This follows from
the fact that
\[
R^{\bullet,+}\cap \tau(-\widetilde{\psi})R^{\bullet,-}=
\{\psi,\psi^{(1)}\}\cup\{\alpha\in R_0^+ \,\, | \,\, 
\bigl(\widetilde{\psi},\widetilde{\alpha}^\vee\bigr)=1
\}.
\]
\end{proof}
%%%%%%%%%%%%%%%%%%%%%%%%%%%%%%%%%%%%%%%%%%%%%%%%%%%%%%%%

%%%%%%%%%%%%%%%%%%%%%%%%%%%%%%%%%%%%%%%%%%%%%%
\subsection{The bispectral problem for RMKC operators}\label{biRMKC}
%%%%%%%%%%%%%%%%%%%%%%%%%%%%%%%%%%%%%%%%%%%%%
We denote the higher order RMKC operators with respect to the dual initial datum
$(\widetilde{D},\widetilde{\kappa},q)$ by
$\widetilde{L}_{\lambda}$ ($\lambda\in\Lambda^+$). Note that the 
RMKC operator $\widetilde{L}:=\widetilde{L}_\theta$ is explicitly given by
\[
\widetilde{L}=q^{-(\widetilde{\rho},\theta)}\sum_{w\in W_0/W_{0,\theta}}
\widetilde{A}(w^{-1}z)\bigl(f(z+w\theta)-f(z)\bigr)+
\Bigl(\sum_{w\in W_0/W_{0,\theta}}q^{-(\widetilde{\rho},w\theta)}\Bigr)f(z),
\]
with
\begin{equation*}
\begin{split}
\widetilde{A}(z)&=\frac{(1-\widetilde{a}_\theta q^{\widetilde{\theta}(z)})
(1-\widetilde{b}_\theta q^{\widetilde{\theta}(z)})
(1-\widetilde{c}_\theta q^{\widetilde{\theta}(z)})
(1-\widetilde{d}_\theta q^{\widetilde{\theta}(z)})}
{(1-q^{2\widetilde{\theta}(z)})(1-q_\theta^2q^{2\widetilde{\theta}(z)})}\\
&\times
\prod_{\alpha\in R_0^+: (\theta,\alpha^\vee)=1}
\frac{(1-\widetilde{a}_\alpha q^{\widetilde{\alpha}(z)})
(1-\widetilde{b}_\alpha q^{\widetilde{\alpha}(z)})}
{(1-q^{2\widetilde{\alpha}(z)})}
\end{split}
\end{equation*}
(here we use that $\widetilde{a}_\alpha=a_{\widetilde{\alpha}}(\widetilde{D},
\widetilde{\kappa},q),\ldots,\widetilde{d}_\alpha=d_{\widetilde{\alpha}}(
\widetilde{D},\widetilde{\kappa},q)$).

%%%%%%%%%%%%%%%%%%%%%%%%%%%%%%%%%%%%%%%%%%%%%%%%%%%%%%%%%%%%%%%%%%%%%%%%%%
\begin{defi}
The system
\begin{equation}\label{BS}
\begin{split}
L_\nu f(\cdot,\xi)&=\Bigl(\sum_{w\in W_0/W_{0,\nu}}q^{(w\nu,\xi)}\Bigr)f(\cdot,\xi),
\qquad \nu\in\widetilde{\Lambda}^+,\\
\widetilde{L}_\lambda f(z,\cdot)&=\Bigl(\sum_{w\in W_0/W_{0,\lambda}}
q^{(w\lambda,z)}\Bigr)f(z,\cdot),\qquad \lambda\in\Lambda^+
\end{split}
\end{equation}
of difference equations for $f\in\mathcal{M}$ is called the
bispectral problem of the RMKC operators.
We write $\textup{Sol}_{RMKC}$ for the vector space over $\mathcal{F}$
consisting of $f\in\mathcal{M}$ satisfying \eqref{BS}.
\end{defi}
%%%%%%%%%%%%%%%%%%%%%%%%%%%%%%%%%%%%%%%%%%%%%%%%%%%%%%%%%%%%
Since the (higher order) RMKC operators are $W_0$-equivariant,
$\textup{Sol}_{RMKC}\subset\mathcal{M}$ is $W_0\times W_0$-invariant
with respect to the contragredient action of $W_0\times W_0$ on $\mathcal{M}$.

%%%%%%%%%%%%%%%%%%%%%%%%%%%%%%%%%%%%%%%%%%%%%
\subsection{The difference Cherednik-Matsuo correspondence}\label{CMK}
%%%%%%%%%%%%%%%%%%%%%%%%%%%%%%%%%%%%%%%%%%%%
Recall that $\mathcal{M}\otimes_{\mathbb{C}}\mathcal{V}$ is a 
left $W_0\times W_0$-module 
with respect to the $\nabla$-action \eqref{nablaaction}.
Let $\chi: \mathcal{M}\otimes_{\mathbb{C}}
\mathcal{V}\rightarrow\mathcal{M}$ be the $\mathcal{M}$-linear $W_0\times
W_0$-equivariant map
\[
\chi\bigl(\sum_{w\in W_0}f_\sigma\otimes v_\sigma\bigr):=
q^{\kappa_{w_0}}\sum_{w\in W_0}q^{-\kappa_w}f_w,
\]
where $\kappa_w:=\sum_{\alpha\in R_0^+\cap w^{-1}R_0^-}\kappa_\alpha$ for $w\in W_0$.
%%%%%%%%%%%%%%%%%%%%%%%%%%%%%%%%%%%%%%%%%%%%%%%%%%%%%%%%
\begin{thm}
The map $\chi$ restricts to an injective, $\mathcal{F}$-linear,
$W_0\times W_0$-equivariant map
\[
\chi: \textup{Sol}_{KZ}\hookrightarrow \textup{Sol}_{RMKC}.
\]
\end{thm}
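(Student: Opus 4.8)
The $\mathcal{F}$-linearity of $\chi$ is immediate from its $\mathcal{M}$-linearity together with $\mathcal{F}\subseteq\mathcal{M}$, and I would verify the $W_0\times W_0$-equivariance (between the $\nabla$-action \eqref{nablaaction} on $\mathcal{M}\otimes_{\mathbb{C}}\mathcal{V}$ and the contragredient action on $\mathcal{M}$) directly on the generators $s_i$ of either copy of $W_0$: using \eqref{C}, \eqref{Cdual}, the $W$-invariance of $\kappa$, and the length formula $\kappa_{s_iw}=\kappa_w\pm\kappa_{\alpha_i}$, one checks that $\sum_{w\in W_0}q^{-\kappa_w}\bigl(C_{(s_i,e)}(z,\xi)v_\sigma\bigr)|_{v_w}=q^{-\kappa_\sigma}$ (and the analogous identity in the second slot), whence $\chi\circ\nabla(s_i,e)=\widehat{s}_i\circ\chi$ and $\chi\circ\nabla(e,s_i)=\widehat{s}_i\circ\chi$. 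Since $\textup{Sol}_{KZ}$ and $\textup{Sol}_{RMKC}$ are precisely the $W_0\times W_0$-stable parts of $\mathcal{M}\otimes_{\mathbb{C}}\mathcal{V}$ and $\mathcal{M}$, once the image of $\chi$ is known to lie in $\textup{Sol}_{RMKC}$ the restriction is automatically $\mathcal{F}$-linear and equivariant. Moreover, since $\{\nabla(e,\sigma)\Phi_{KZ}\}_{\sigma\in W_0}$ is an $\mathcal{F}$-basis of $\textup{Sol}_{KZ}$ by Proposition \ref{propertiesKZ}{\bf (iv)}, since $\textup{Sol}_{RMKC}$ is an $\mathcal{F}$-submodule stable under the contragredient $W_0\times W_0$-action, and since $\chi$ is equivariant, it suffices for the inclusion $\chi(\textup{Sol}_{KZ})\subseteq\textup{Sol}_{RMKC}$ to prove the single statement $\chi(\Phi_{KZ})\in\textup{Sol}_{RMKC}$.

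The heart of the proof is thus to show that $\chi(f)$ solves the bispectral eigenvalue problem \eqref{BS} for every $f\in\textup{Sol}_{KZ}$, which is the bispectral form of the difference Cherednik--Matsuo correspondence. My plan is to follow the scheme of \cite{vMS,vM,StSph} (which settles the twisted equal-lattice case) and of \cite{CQKZ2,CInd,Ka,StI} (which handle the single-operator / single-variable correspondence over general root system data), together with Cherednik's construction \cite{Cann} of the RMKC operators recalled in Subsection \ref{RMKC}. The essential input is a Hecke-theoretic compatibility of $\chi$: it intertwines the $\nabla$-action of $W\times\widetilde{W}$ on $\mathcal{M}\otimes_{\mathbb{C}}\mathcal{V}$ with the Demazure--Lusztig representation \eqref{DL} of the affine Hecke algebra on $\mathcal{M}$ underlying the RMKC operators, in such a way that the defining translation invariances \eqref{BqKZdef} of $\textup{Sol}_{KZ}$ pass, after symmetrization over the $W_0$-orbits of $\nu$ and $\lambda$ and since $\sum_{\nu'\in W_0\nu}\widehat{Y}^{\nu'}$ is built into the very definition of $L_\nu$, to the equations \eqref{BS} for $\chi(f)$; the eigenvalues $\sum_{w\in W_0/W_{0,\nu}}q^{(w\nu,\xi)}$ and $\sum_{w\in W_0/W_{0,\lambda}}q^{(w\lambda,z)}$ are the scalars by which the Bernstein center of the affine Hecke algebra acts on the minimal principal series module $\mathcal{V}$ of central character $q^\xi$, respectively $q^z$. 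For the second family of equations I would either repeat the argument in the second slot using \eqref{Cdual} or transport the first-slot argument via the duality symmetry $j$ of Remark \ref{DS}. The passage from $\bullet=t$, $\widetilde{\Lambda}=\Lambda$ to arbitrary root system data I do not expect to require new ideas: the extra flexibility in the lattices is handled, as in the proof of Theorem \ref{firststep}, by first establishing everything for the auxiliary sublattices $M\subseteq\Lambda$, $\widetilde{M}\subseteq\widetilde{\Lambda}$ and then invoking integrability, and the untwisted case is entirely parallel. This intertwining is the step I expect to be the main obstacle; all the rest is formal or follows from the asymptotic analysis of $\Phi_{KZ}$.

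For injectivity, recall from Proposition \ref{propertiesKZ}{\bf (ii)}--{\bf (iv)} the $\mathcal{F}$-basis $\{\nabla(e,\sigma)\Phi_{KZ}\}_{\sigma\in W_0}$ of $\textup{Sol}_{KZ}$ and the normally convergent expansion
\[
\Phi_{KZ}(z,\xi)=\frac{\mathcal{W}(z,\xi)}{\mathcal{S}(z)\widetilde{\mathcal{S}}(\xi)}\sum_{(\alpha,\beta)\in Q^+\times\widetilde{Q}^+}\Gamma_{(\alpha,\beta)}^{KZ}\,q^{-\alpha(z)-\beta(\xi)},\qquad \Gamma_{(0,0)}^{KZ}=v_{w_0}.
\]
Since $\chi$ extracts from the $v_{w_0}$-component the factor $q^{\kappa_{w_0}}q^{-\kappa_{w_0}}=1$, the meromorphic function $\chi(\Phi_{KZ})(z,\xi)$ has the expansion $\mathcal{W}(z,\xi)\,\mathcal{S}(z)^{-1}\widetilde{\mathcal{S}}(\xi)^{-1}\bigl(1+\cdots\bigr)$ in $q^{-\alpha_i(z)},q^{-\widetilde{\alpha}_i(\xi)}$, and by equivariance $\chi(\nabla(e,\sigma)\Phi_{KZ})(z,\xi)=\chi(\Phi_{KZ})(z,\sigma^{-1}\xi)$, whose leading term carries the plane-wave exponential $q^{-(\xi,\sigma(\widetilde{\rho}+w_0z))}$. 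For generic $z$ these exponentials are pairwise distinct as $\sigma$ runs over $W_0$; since a nonzero element of $\mathcal{F}=\mathcal{M}^{\tau(\widetilde{\Lambda})\times\tau(\Lambda)}$ is rational in $(q^z,q^\xi)$, hence of bounded growth in the relevant asymptotic sectors, it cannot bridge such an exponential gap, so the functions $\chi(\nabla(e,\sigma)\Phi_{KZ})$ ($\sigma\in W_0$) are $\mathcal{F}$-linearly independent (the asymptotic argument of \cite{vMS,vM,StSph}). Thus $\chi$ carries an $\mathcal{F}$-basis of $\textup{Sol}_{KZ}$ to linearly independent elements of $\textup{Sol}_{RMKC}$, so $\chi|_{\textup{Sol}_{KZ}}$ is injective.
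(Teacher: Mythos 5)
Your proposal takes essentially the same route as the paper: the inclusion of the image into $\textup{Sol}_{RMKC}$ is deferred to the Hecke-algebraic intertwining arguments of \cite{vMS,vM} extending \cite{CQKZ2}, and injectivity is obtained exactly as in the paper by showing that the functions $\chi(\nabla(e,\sigma)\Phi_{KZ})$ ($\sigma\in W_0$) are $\mathcal{F}$-linearly independent via their distinct plane-wave asymptotics. One minor slip: elements of $\mathcal{F}$ are $\widetilde{\Lambda}\times\Lambda$-translation invariant but not in general rational in $(q^z,q^\xi)$ (theta-function quotients occur); this does not affect your argument, since translation invariance alone makes such coefficients constant along the orbits used to pass to the asymptotic regime, so they still cannot bridge the exponential gap.
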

%%%%%%%%%%%%%%%%%%%%%%%%%%%%%%%%%%%%%%%%%%%%%%%%%%%%%%%%
\begin{proof}
The proofs in \cite{vMS,vM}, extending the results of Cherednik \cite{CQKZ2}
to the bispectral setting, generalize easily to the present setting.
In particular, 
the injectivity is proved by showing that the meromorphic functions
$\chi(\nabla(e,w)\Phi_{KZ}\bigr)$ ($w\in W_0$) are $\mathcal{F}$-linear
independent.
\end{proof}
%%%%%%%%%%%%%%%%%%%%%%%%%%%%%%%%%%%%%%%%%%%%%%%%%%%%%%%

%%%%%%%%%%%%%%%%%%%%%%%%%%%%%%%%%%%%%%%%%%%%%
\subsection{Basic Harish-Chandra series}\label{HCsub}
%%%%%%%%%%%%%%%%%%%%%%%%%%%%%%%%%%%%%%%%%%%%%

%%%%%%%%%%%%%%%%%%%%%%%%%%%%%%%%%%%%%%%%%%%%%%%%%%%%
\begin{defi}
The basic Harish-Chandra series $\Phi(\cdot,\cdot)=\Phi(\cdot,\cdot;D,\kappa,q)$
is defined by
\[
\Phi:=\chi\bigl(\Phi_{KZ}\bigr)\in\textup{Sol}_{RMKC}.
\]
\end{defi}
%%%%%%%%%%%%%%%%%%%%%%%%%%%%%%%%%%%%%%%%%%%%%%%%%%%%
We are now in the position to prove all the fundamental properties of
the basic Harish-Chandra series as stated in Section \ref{Intro}.\\

\noindent
{\bf Proof of Theorem \ref{THM1}.}
The results on the asymptotically free
solution $\Phi_{KZ}$ of the bispectral quantum KZ equations 
from Subsection \ref{SOLsection} show that
the basic Harish-Chandra series $\Phi$
satisfies all the properties as stated in Theorem \ref{THM1}, with
$\Gamma_\alpha(\xi)=\chi\bigl(\Gamma_\alpha^{KZ}(\xi)\bigr)$ 
($\alpha\in Q^+$). In particular, the eigenvalue equation \eqref{EE}
for $\Phi$ follows from the fact that $\Phi$ solves the bispectral problem
\eqref{BS} of the RMKC operators since $L=L_{\widetilde{\psi}}$.
It thus suffices to prove the uniqueness claim. 

This follows from the results in \cite[\S 3]{CInd} (untwisted case) and 
\cite[\S 2]{LS} on the analog of the Harish-Chandra homomorphism and from
the subsequent formal analysis of the basic Harish-Chandra series in
\cite[\S 4]{LS}. These results show
that the eigenvalue equations \eqref{EE} for $\Phi$ is equivalent
to a system of recurrence relations for the expansion coefficients
$\Gamma_\alpha(\xi)$ ($\alpha\in Q^+$) of the form
\begin{equation*}
\Bigl(\sum_{w\in W_0/W_{0,\psi}}q^{(w\widetilde{\psi},\xi)}-
\sum_{w\in W_0/W_{0,\psi}}q^{(w\widetilde{\psi},\xi+w_0\alpha)}\Bigr)
\Gamma_\alpha(\xi)=
\sum_{\stackrel{\beta\in Q^+\setminus\{0\}:}{\alpha-\beta\in Q^+}}d_{\alpha,\beta}(\xi)
\Gamma_{\alpha-\beta}(\xi)
\end{equation*}
for suitable holomorphic functions $d_{\alpha,\beta}(\xi)$ in $\xi\in 
E_{\mathbb{C}}$. Hence the eigenvalue equations \eqref{EE} determine the
expansion coefficients $\Gamma_\alpha$ ($\alpha\in 
Q^+\setminus\{0\}$) uniquely in
terms of $\Gamma_0$. 
$\qquad\qquad\qquad\qquad\qquad\qquad\qquad\qquad\qquad\qquad\,\,
\qquad\qquad\qquad\qquad\qquad\qquad\,\,\Box$

\noindent
{\bf Proof of Theorem \ref{THM2}.} The selfduality of the basic Harish-Chandra
series $\Phi$ follows immediately from the selfduality of $\Phi_{KZ}$
(see Proposition \ref{propertiesKZ}{\bf (i)}).
$\qquad\qquad\quad\,\Box$

\noindent
{\bf Proof of Theorem \ref{THM3}.} By Proposition 
\ref{propertiesKZ}{\bf (iv)} there exists unique
$m_{\tau_1,\tau_2}^{\sigma}\in\mathcal{F}$ ($\sigma,\tau_1,\tau_2\in W_0$)
such that
\[
\nabla(\sigma,\tau_2)\Phi_{KZ}=
\sum_{\tau_1\in W_0}m_{\tau_1,\tau_2}^{\sigma}\bigl(\nabla(e,\tau_1)\Phi_{KZ}
\bigr)
\]
for all $\sigma,\tau_2\in W_0$. Applying the injective, 
$W_0\times W_0$-equivariant Cherednik map $\chi$ shows
that
\begin{equation}\label{cc}
\Phi(\sigma^{-1}z,\tau_2^{-1}\xi)=\sum_{\tau_1\in W_0}
m_{\tau_1,\tau_2}^{\sigma}(z,\xi)\Phi(z,\tau_1^{-1}\xi)
\end{equation}
as meromorphic functions in
$(z,\xi)\in E_{\mathbb{C}}\times E_{\mathbb{C}}$.
By the injectivity of $\chi|_{\textup{Sol}_{KZ}}$ it follows 
that the equations \eqref{cc} determine 
the $m_{\tau_1,\tau_2}^{\sigma}\in\mathcal{F}$
uniquely. 
$\qquad\qquad\qquad\qquad\quad\,\,\Box$

\noindent
{\bf Proof of Proposition \ref{PROP5}.}
Theorem \ref{THM1} implies that
\begin{equation}\label{EC}
\Phi(z,\xi)=q^{-(\rho+w_0\xi,z)}\sum_{\alpha\in Q^+}\widehat{\Gamma}_\alpha(\xi)
q^{-(\alpha,z)}
\end{equation}
for generic $\xi\in E_{\mathbb{C}}$ if $\Re(z)$ is
sufficiently deep in the negative fundamental Weyl chamber $E_-$, 
with leading coefficient
\[
\widehat{\Gamma}_0(\xi)=q^{(\widetilde{\rho},\rho-\xi)}
\frac{\Gamma_0(\xi)}{\widetilde{\mathcal{S}}(\xi)}.
\]
Such (formal) power series solutions of the 
spectral problem of the RMKC operators are unique up to normalization
(see \cite[Thm. 4.4]{LS}). These two observations are valid without
any restrictions on the initial datum $(D,\kappa,q)$. 
Under the assumptions on $(D,\kappa,q)$ as stated in the proposition, 
comparison with the series expansion of the multivariable Baker-Akhiezer
function $\psi(-w_0\xi,\cdot)$ from
\cite[Rem. 3.6]{ChE} shows that
$\Phi(z,\xi)=\textup{cst}(\xi)\psi(-w_0\xi,z)$.
A straightforward computation proves 
that the leading coefficient $\widehat{\Gamma}_0(\xi)$ of the power
series expansion \eqref{EC} of $\Phi(\cdot,\xi)$ 
coincides with
the leading coefficient $\Delta^\prime(\xi)=\Delta^\prime(-w_0\xi)$ 
of $\psi(-w_0\xi,\cdot)$ up to a nonzero multiplicative constant
(see \cite[\S 2.1.4]{ChE} for the definition of $\Delta^\prime(\xi$)). 
This shows that $\textup{cst}(\xi)$ is independent of $\xi$.
$\qquad\qquad\qquad\qquad\qquad\qquad\qquad\qquad
\qquad\qquad\qquad\qquad\qquad\qquad\quad\Box$

%%%%%%%%%%%%%%%%%%%%%%%%%%%%%%%%%%%%%%%%%%%%%%%%%%%%%%%%%%%%%%%%%%%%
\begin{rema}
In the proof of Proposition \ref{PROP5} we could have used the selfduality
of $\Phi$ (Theorem \ref{THM2}) and $\psi$ (\cite[Thm. 3.3(iii)]{ChE})
to immediately conclude that $\textup{cst}(\xi)$ is independent of $\xi$.
The current proof has the advantage that the selfduality of the normalized
multivariable Baker-Akhiezer function $\psi$ becomes a 
consequence of the selfduality of $\Phi$. 
\end{rema}
%%%%%%%%%%%%%%%%%%%%%%%%%%%%%%%%%%%%%%%%%%%%%%%%%%%%%%%%%%%%%%%%%%%%%%

%%%%%%%%%%%%%%%%%%%%%%%%%%%%%%%%%%%%%%%%%%%%%%%%%%%%%%%%%%%%%%%%%%%%%%
\section{The  connection cocycle}\label{CC}
%%%%%%%%%%%%%%%%%%%%%%%%%%%%%%%%%%%%%%%%%%%%%%%%%%%%%%%%%%%%%%%%%%%%%%
In this section we prove the explicit expressions for the
connection coefficients as stated in
Theorem \ref{THM4} using rank reduction. The strategy is as follows.

Fix $i\in\{1,\ldots,n\}$. Let $i^*\in\{1,\ldots,n\}$ be 
the corresponding index such that
$-w_0\alpha_i=\alpha_{i^*}$. Let $\widetilde{\delta}_i\in\widetilde{\Lambda}$
be a weight such that $(\widetilde{\delta}_i,\widetilde{\alpha}_i^\vee)=0$
and $(\widetilde{\delta}_i,\widetilde{\alpha}_j^\vee\bigr)>0$ if 
$j\not=i$ (one can take for instance $\widetilde{\delta}_i=
\sum_{j\not=i}\widetilde{\varpi}_j$ with $\widetilde{\varpi}_j\in
\widetilde{\Lambda}$ as in the proof of Theorem \ref{firststep}).
 
Recall the holomorphic function
\[
\Psi(z,\xi)=\sum_{\alpha\in Q^+}\Gamma_\alpha(\xi)q^{-\alpha(z)}
\]
from Theorem \ref{THM1}, such that
\begin{equation}\label{altexp}
\Phi(z,\xi)=\frac{\mathcal{W}(z,\xi)}
{\mathcal{S}(z)\widetilde{\mathcal{S}}(\xi)}\Psi(z,\xi).
\end{equation}
Define the holomorphic function $\mathcal{S}_i(x)$ in $x\in\mathbb{C}$ by
\[
\mathcal{S}_i(x):=\bigl(q_i^2a_i^{-1}q^{-x},q_i^2b_i^{-1}q^{-x},
q_i^2c_i^{-1}q^{-x},q_i^2d_i^{-1}q^{-x};q_i^2\bigr)_{\infty}
\]
and the holomorphic function $\Psi_i(x,\xi)$ in $(x,\xi)\in\mathbb{C}\times 
E_{\mathbb{C}}$ by
\[
\Psi_i(x,\xi):=\sum_{r=0}^{\infty}\Gamma_{r\alpha_i}(\xi)q^{-rx}.
\]
Then
\begin{equation*}
\begin{split}
\lim_{m\rightarrow\infty}\mathcal{S}(z-m\widetilde{\delta}_i)&=
\mathcal{S}_i(\alpha_i(z)),\\
\lim_{m\rightarrow\infty}\Psi(z-m\widetilde{\delta}_i,\xi)&=
\Psi_i(\alpha_i(z),\xi),
\end{split}
\end{equation*}
uniformly on compacta. We define now the meromorphic function
$\Phi_i(x,\xi)$ in $(x,\xi)\in\mathbb{C}\times E_{\mathbb{C}}$ by
\[
\Phi_i(x,\xi):=\frac{\mathcal{W}_i(x,\widetilde{\alpha}_{i^*}(\xi))}
{\mathcal{S}_i(x)\widetilde{\mathcal{S}}(\xi)}\Psi_i(x,\xi)
\]
with the one variable plane wave
\[
\mathcal{W}_i(x,y):=q^{\frac{1}{2\mu_i}(\kappa_i+\kappa_{2\alpha_i}-x)
(\kappa_i+\kappa_{\alpha_i^{(1)}}-y)},
\]
cf. \cite[(2.1)]{StQ}. 
We will prove that
$\Phi_i(\cdot,\xi)$ is the asymptotically free solution of the 
Askey-Wilson \cite{AW} second
order difference operator, with associated AW parameters given by
$(a_i,b_i,c_i,d_i)$. This allows us to compute
the connection coefficients using results from the classical theory
\cite{GR} on basic
hypergeometric series.

%%%%%%%%%%%%%%%%%%%%%%%%%%%%%%%%%%%%%%%%%%%%%%%%%%%%%%%%%%%%%%%%%%%%%%%
\subsection{Vanishing connection coefficients}
%%%%%%%%%%%%%%%%%%%%%%%%%%%%%%%%%%%%%%%%%%%%%%%%%%%%%%%%%%%%%%%%%%%%%%
We first show that most of the connection coefficients are zero.

%%%%%%%%%%%%%%%%%%%%%%%%%%%%%%%%%%%%%%%%%%%%%%%%%%%%%%%%%%%%%%%%%%%%
\begin{prop}\label{zero}
Let $\tau_1,\tau_2\in W_0$.
Then $m_{\tau_1,\tau_2}^{s_i}\equiv 0$ if $\tau_1\not\in\{\tau_2,\tau_2s_{i^*}\}$.
\end{prop}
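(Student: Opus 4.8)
The plan is to exploit the combinatorial structure of the $\nabla$-action on $\mathcal{M}\otimes_{\mathbb{C}}\mathcal{V}$ together with the explicit formulas for the cocycle $C$ in Theorem \ref{BqKZ}, and then transport the vanishing statement through the Cherednik map $\chi$. Recall from the proof of Theorem \ref{THM3} that the connection coefficients are defined by $\nabla(s_i,\tau_2)\Phi_{KZ}=\sum_{\tau_1\in W_0}m_{\tau_1,\tau_2}^{s_i}\,\nabla(e,\tau_1)\Phi_{KZ}$, and that the coefficients are uniquely determined by this identity since $\{\nabla(e,\tau_1)\Phi_{KZ}\}_{\tau_1\in W_0}$ is an $\mathcal{F}$-basis of $\textup{Sol}_{KZ}$ (Proposition \ref{propertiesKZ}(iv)). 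Using $m_{\tau_1,\tau_2}^\sigma(z,\xi)=m_{\tau_2^{-1}\tau_1,e}^\sigma(z,\tau_2^{-1}\xi)$, it suffices to treat $\tau_2=e$ and show $m_{\tau_1,e}^{s_i}\equiv 0$ unless $\tau_1\in\{e,s_{i^*}\}$.

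First I would identify the ``direction'' in which each basis solution $\nabla(e,\tau_1)\Phi_{KZ}$ grows. From Theorem \ref{THM1}(2a), $\Phi(z,\tau_1^{-1}\xi)$ has plane-wave leading term governed by $q^{(\rho-\tau_1^{-1}\xi,\widetilde{\rho}+w_0z)}$, so the solutions are distinguished by the $W_0$-orbit point $\tau_1^{-1}\xi$ appearing in the exponential. The idea is to apply $\nabla(s_i,e)$ to $\Phi_{KZ}$ and read off, from the explicit formula for $C_{(s_i,e)}$ in \eqref{C}, precisely which $v_\sigma$-components — equivalently which shifted spectral exponents — can be produced. Concretely, $C_{(s_i,e)}(z,\xi)$ acts on $v_\sigma$ by a scalar multiple of $v_\sigma$ plus a scalar multiple of $v_{s_i\sigma}$; combined with the $\xi$-translation part of the $\nabla$-action being trivial here (we are applying a pure $W_0\times\{e\}$ element), the leading asymptotics of $\nabla(s_i,e)\Phi_{KZ}$ involves only the spectral exponent $\xi$ itself (from the $v_\sigma$ term) and $s_{i^*}\xi$ — the latter because the dual/spectral reflection visible in the plane wave $\mathcal{W}(z,\xi)=q^{(\rho-\xi,\widetilde\rho+w_0z)}$ couples $s_i$ acting on $z$ with $w_0 s_i w_0=s_{i^*}$ acting on $\xi$. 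Matching these two possible exponents against the basis $\{q^{(\rho-\tau_1^{-1}\xi,\widetilde\rho+w_0z)}\}_{\tau_1\in W_0}$ forces $\tau_1^{-1}\xi\in\{\xi, s_{i^*}\xi\}$ for generic $\xi$, hence $\tau_1\in\{e, s_{i^*}\}$; all other $m_{\tau_1,e}^{s_i}$ must vanish by uniqueness of the expansion coefficients. The statement for general $\tau_2$ then follows by the translation identity above.

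The technical heart — and the step I expect to be the main obstacle — is making rigorous the claim that the asymptotic expansion of $\nabla(s_i,e)\Phi_{KZ}$ only ``sees'' the two spectral exponents $\xi$ and $s_{i^*}\xi$. One has to work not with a single plane wave but with the full series $\Phi_{KZ}(z,\xi)=\mathcal{W}(z,\xi)\sum_{(\alpha,\beta)}\Upsilon_{(\alpha,\beta)}q^{-\alpha(z)-\beta(\xi)}$, track how $C_{(s_i,e)}$ transforms it (the rational factors $c_i(z;-\kappa,q)^{-1}$ expand as geometric series in $q^{-\alpha_i(z)}$, staying within the asymptotic sector), and argue that the resulting expansion, grouped by leading exponential behaviour in $z$, is an $\mathcal{F}$-linear combination of at most the two basis solutions indexed by $e$ and $s_{i^*}$. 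I would do this by writing $\nabla(s_i,e)\Phi_{KZ}-m_{e,e}^{s_i}\nabla(e,e)\Phi_{KZ}-m_{s_{i^*},e}^{s_i}\nabla(e,s_{i^*})\Phi_{KZ}$ for a suitable choice of the two scalars extracted from leading terms, showing it is a solution in $\textup{Sol}_{KZ}$ whose asymptotic expansion has strictly smaller ``spectral support'', and then invoking the uniqueness in Proposition \ref{propertiesKZ}(iv) — equivalently the linear independence over $\mathcal{F}$ of the full basis — to conclude the remainder is zero. An alternative, and perhaps cleaner, route is to observe that $\nabla(s_i,e)$ commutes with the $\xi$-difference equations coming from $\widetilde W$ acting via $\{e\}\times\widetilde W$, so $\nabla(s_i,e)\Phi_{KZ}$ lies in the same $\textup{Sol}_{KZ}$ and its $\widetilde\Lambda$-translation behaviour in $\xi$ is unchanged; since each basis element $\nabla(e,\tau_1)\Phi_{KZ}$ is a ``$\xi$-eigenfunction'' for these translations with eigencharacter determined by $\tau_1^{-1}\xi$ modulo $q^{-\widetilde\alpha_i}$-corrections, comparing eigencharacters of $\nabla(s_i,e)\Phi_{KZ}$ (which can involve only $\xi$ and $s_{i^*}\xi$, by the explicit form of $C_{(s_i,e)}$) against those of the basis pins down the support of the expansion. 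Either way, once the spectral support is shown to be contained in $\{e,s_{i^*}\}$, the proposition is immediate.
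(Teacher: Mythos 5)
Your overall strategy -- reduce to $\tau_2=e$ via $m^{\sigma}_{\tau_1,\tau_2}(z,\xi)=m^{\sigma}_{\tau_2^{-1}\tau_1,e}(z,\tau_2^{-1}\xi)$, then separate the terms $\Phi(z,\tau^{-1}\xi)$ by their exponential growth in $z$ for generic $\xi$ and invoke uniqueness of the expansion -- is the same as the paper's. But the step you yourself flag as ``the technical heart'' is exactly where the proposal breaks down, and the two justifications you offer for it do not work. The claim that the expansion of $\nabla(s_i,e)\Phi_{KZ}$ can only involve the spectral exponents $\xi$ and $s_{i^*}\xi$ \emph{is} the proposition; deducing it from the fact that $C_{(s_i,e)}$ maps $v_\sigma$ into $\mathbb{C}v_\sigma+\mathbb{C}v_{s_i\sigma}$ conflates the index of the basis vector of $\mathcal{V}$ with the spectral parameter $\tau^{-1}\xi$ (the leading term of $\Phi_{KZ}(\cdot,\xi)$ is the single vector $\mathcal{W}(z,\xi)v_{w_0}$, and the basis solutions $\nabla(e,\tau)\Phi_{KZ}$ are in no sense supported on single $v_\sigma$'s). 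The real obstruction is that the defining series of $\Phi_{KZ}(s_i^{-1}z,\xi)$ converges where $q^{+\alpha_i(z)}$ is small, i.e.\ in the sector \emph{opposite} to the one where the basis $\{\nabla(e,\tau)\Phi_{KZ}\}$ has its expansions, so ``tracking'' the series through $C_{(s_i,e)}$ (whose poles you propose to expand geometrically) never produces a convergent expansion in the right sector -- that re-expansion across the wall $\alpha_i=0$ is the rank-one connection problem itself. Your fallback, subtracting two terms and arguing the remainder has ``strictly smaller spectral support,'' is circular: to see the remainder's support shrink you already need to know the support of $\nabla(s_i,e)\Phi_{KZ}$ is contained in $\{e,s_{i^*}\}$.

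The missing idea, which is what the paper actually uses, is a limit transition along a direction $\widetilde{\delta}_i\in\widetilde{\Lambda}$ with $(\widetilde{\delta}_i,\widetilde{\alpha}_i^\vee)=0$ and $(\widetilde{\delta}_i,\widetilde{\alpha}_j^\vee)>0$ for $j\neq i$. Because $s_i\widetilde{\delta}_i=\widetilde{\delta}_i$, one has $\Phi(s_i(z)-m\widetilde{\delta}_i,\xi)=\Phi(s_i(z-m\widetilde{\delta}_i),\xi)$, so the left-hand side of the connection identity becomes accessible through the \emph{known} expansion of $\Phi$ (only the coordinates $\alpha_j(z)$, $j\neq i$, are sent to infinity, and those all move into the convergence region); meanwhile the terms on the right pick up factors $q^{m(w_0\widetilde{\delta}_i,\tau^{-1}\xi-\sigma^{-1}\xi)}$, and since the stabilizer of $w_0\widetilde{\delta}_i$ is $\langle s_{i^*}\rangle$, for $\sigma^{-1}\xi$ generic dominant exactly the two cosets $\tau\in\{\sigma,\sigma s_{i^*}\}$ survive with exponent zero while all others (and the left-hand side, since $\sigma\notin\{e,s_{i^*}\}$) tend to zero. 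A second limit along $\widetilde{\varpi}_i$ then separates $\tau=\sigma$ from $\tau=\sigma s_{i^*}$ and kills $m^{s_i}_{\sigma,e}$. Without this $s_i$-fixed direction (or an equivalent device) your argument has no way to compare $\Phi(s_iz,\xi)$ with the asymptotically free basis, so as written the proof does not go through.
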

%%%%%%%%%%%%%%%%%%%%%%%%%%%%%%%%%%%%%%%%%%%%%%%%%%%%%%%%%%%%%%%%%%
\begin{proof}
Fix $\sigma\not\in\{e,s_{i^*}\}$.
Since $m_{\tau_1,\tau_2}^{\sigma}(z,\xi)=
m_{\tau_2^{-1}\tau_1,e}^{\sigma}(z,\tau_2^{-1}\xi)$ it suffices to show
that $m_{\sigma,e}^{s_i}\equiv 0$.

Rewriting the identity
\[
\Phi(s_iz,\xi)=\sum_{\tau\in W_0}m_{\tau,e}^{s_i}(z,\xi)\Phi(z,\tau^{-1}\xi)
\]
using \eqref{altexp}
gives
\begin{equation}\label{a1}
\Psi(s_iz,\xi)=\frac{\mathcal{S}(s_iz)\widetilde{\mathcal{S}}(\xi)}
{\mathcal{S}(z)}\sum_{\tau\in W_0}\frac{
m_{\tau,e}^{s_i}(z,\xi)\mathcal{W}(z,\tau^{-1}\xi)}{\widetilde{\mathcal{S}}
(\tau^{-1}\xi)\mathcal{W}(s_iz,\xi)}\Psi(z,\tau^{-1}\xi).
\end{equation}
In \eqref{a1} we replace $z$ by $z-m\widetilde{\delta}_i$
and multiply the resulting identity by 
\[
q^{m(w_0\widetilde{\delta}_i,
\xi-\sigma^{-1}\xi)}\frac{\mathcal{S}(z-m\widetilde{\delta}_i)}
{\mathcal{S}(s_iz-m\widetilde{\delta}_i)}. 
\]
It gives
\begin{equation}\label{a2}
\begin{split}
&\frac{\mathcal{S}(z-m\widetilde{\delta}_i)}
{\mathcal{S}(s_iz-m\widetilde{\delta}_i)}
q^{m(w_0\widetilde{\delta}_i,\xi-\sigma^{-1}\xi)}\Psi(s_iz-m\widetilde{\delta}_i,
\xi)=\\
&=\frac{\widetilde{\mathcal{S}}(\xi)}{\mathcal{W}(s_iz,\xi)}
\sum_{\tau\in W_0}\frac{m_{\tau,e}^{s_i}(z,\xi)
\mathcal{W}(z,\tau^{-1}\xi)}{\widetilde{\mathcal{S}}(\tau^{-1}\xi)}
q^{m(w_0\widetilde{\delta}_i,\tau^{-1}\xi-\sigma^{-1}\xi)}
\Psi(z-m\widetilde{\delta}_i,\tau^{-1}\xi)
\end{split}
\end{equation}
since
$m_{\tau,e}^{s_i}$ is $\widetilde{\Lambda}\times\Lambda$-translation
invariant and 
\[
\frac{\mathcal{W}(z-m\widetilde{\delta}_i,\tau^{-1}\xi)}
{\mathcal{W}(s_iz-m\widetilde{\delta}_i,\xi)}=
q^{m(w_0\widetilde{\delta}_i,\tau^{-1}\xi-\xi)}
\frac{\mathcal{W}(z,\tau^{-1}\xi)}{\mathcal{W}(s_iz,\xi)}.
\]
Set
\[
E_{\mathbb{C}}^+:=\{\xi\in E_{\mathbb{C}}\,\, | \,\, \Re\bigl(\alpha_i(\xi)\bigr)>0
\quad \forall\, i \}
\]
and fix generic $(z,\xi)\in E_{\mathbb{C}}\times\sigma E_{\mathbb{C}}^+$.
Taking the limit $m\rightarrow\infty$ in \eqref{a2} then gives
\begin{equation}\label{a3}
\sum_{\tau\in\{\sigma,\sigma s_{i^*}\}}
\frac{m_{\tau,e}^{s_i}(z,\xi)\mathcal{W}(z,\tau^{-1}\xi)}
{\widetilde{\mathcal{S}}(\tau^{-1}\xi)}\Psi_i(\alpha_i(z),\tau^{-1}\xi)=0.
\end{equation}
Let $\widetilde{\varpi}_i\in\widetilde{\Lambda}$ such that
$(\widetilde{\varpi}_i,\widetilde{\alpha}_j^\vee)\in\delta_{i,j}\mathbb{Z}_{>0}$.
Replace $z$ by
$z-m\widetilde{\varpi}_i$ in \eqref{a3} and multiply both sides of the identity
by $q^{-m(\rho+w_0\sigma^{-1}\xi,\widetilde{\varpi}_i)}$. Then
\[
\sum_{\tau\in\{\sigma,\sigma s_{i^*}\}}
\frac{m_{\tau,e}^{s_i}(z,\xi)\mathcal{W}(z,\tau^{-1}\xi)}
{\widetilde{\mathcal{S}}(\tau^{-1}\xi)}
q^{m(w_0(\tau^{-1}\xi-\sigma^{-1}\xi),\widetilde{\varpi}_i)}
\Psi_i(\alpha_i(z)-m(\widetilde{\varpi}_i,\alpha_i),\tau^{-1}\xi)=0.
\]
Taking the limit $m\rightarrow\infty$ we get
\[
\frac{m_{\sigma,e}^{s_i}(z,\xi)\mathcal{W}(z,\sigma^{-1}\xi)
\Gamma_0(\sigma^{-1}\xi)}
{\widetilde{\mathcal{S}}(\sigma^{-1}\xi)}=0.
\]
Hence $m_{\sigma,e}^{s_i}(z,\xi)=0$ for generic 
$(z,\xi)\in E_{\mathbb{C}}\times \sigma E_{\mathbb{C}}^+$.
Since $m_{\sigma,e}^{s_i}$ is $\widetilde{\Lambda}\times\Lambda$-translation
invariant, we get $m_{\sigma,e}^{s_i}\equiv 0$. 
\end{proof}

%%%%%%%%%%%%%%%%%%%%%%%%%%%%%%%%%%%%%%%%%%%%%%%%%%%%%%%%%%%%%%%%%%%%%%
\subsection{Rank reduction}
%%%%%%%%%%%%%%%%%%%%%%%%%%%%%%%%%%%%%%%%%%%%%%%%%%%%%%%%%%%%%%%%%%%%%
The aim is to compute $\Phi_i(x,\xi)$ explicitly in terms of
basic hypergeometric series. The following proposition is fundamental.

%%%%%%%%%%%%%%%%%%%%%%%%%%%%%%%%%%%%%%%%%%%%%%%%%%%%%%%%%%%%%%%%%%%%%
\begin{prop}\label{XYX}
{\bf (i)} We have
\[
\Phi_i(-\alpha_i(z),\xi)=m_{e,e}^{s_i}(z,\xi)\Phi_i(\alpha_i(z),\xi)+
m_{s_{i^*},e}^{s_i}(z,\xi)\Phi_i(\alpha_i(z),s_{i^*}\xi)
\]
as meromorphic functions in $(z,\xi)\in E_{\mathbb{C}}\times E_{\mathbb{C}}$.\\
{\bf (ii)} We have the eigenvalue equations
\begin{equation}\label{AWde}
\mathcal{M}_i\Phi_i(\cdot,\xi)=\bigl(q^{\widetilde{\alpha}_{i^*}(\xi)}+
q^{-\widetilde{\alpha}_{i^*}(\xi)}-\widetilde{a}_i-\widetilde{a}_i^{-1}\bigr)
\Phi_i(\cdot,\xi)
\end{equation}
as meromorphic functions in $(\cdot,\xi)\in\mathbb{C}\times E_{\mathbb{C}}$, 
with $\mathcal{M}_i$ the
Askey-Wilson \cite{AW} second order difference operator
\begin{equation*}
\begin{split}
\bigl(\mathcal{M}_ig\bigr)(x):=&A_i(x)(g(x-2\mu_i)-g(x))+
A_i(-x)(g(x+2\mu_i)-g(x)),\\
A_i(x):=&\frac{(1-a_iq^{-x})(1-b_iq^{-x})(1-c_iq^{-x})(1-d_iq^{-x})}
{\widetilde{a}_i(1-q^{-2x})(1-q_i^2q^{-2x})}.
\end{split}
\end{equation*}
\end{prop}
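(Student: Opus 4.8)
Both parts rest on the limiting device $z \mapsto z - m\widetilde{\delta}_i$, $m \to \infty$. Since $\alpha_i(\widetilde{\delta}_i) = 0$ while $\alpha_j(\widetilde{\delta}_i) > 0$ for $j \neq i$, one has not only the convergences $\mathcal{S}(z - m\widetilde{\delta}_i) \to \mathcal{S}_i(\alpha_i(z))$ and $\Psi(z - m\widetilde{\delta}_i,\xi) \to \Psi_i(\alpha_i(z),\xi)$ recorded above, but more generally, for any fixed $v \in \widetilde{\Lambda}$ and generic $\xi$,
\[
q^{m(\rho - \xi, w_0\widetilde{\delta}_i)}\,\Phi(z + v - m\widetilde{\delta}_i,\xi) \;\longrightarrow\; \frac{q^{(\rho - \xi,\, \widetilde{\rho} + w_0(z+v))}}{\mathcal{W}_i\bigl(\alpha_i(z) + \alpha_i(v),\, \widetilde{\alpha}_{i^*}(\xi)\bigr)}\,\Phi_i\bigl(\alpha_i(z) + \alpha_i(v),\xi\bigr)
\]
uniformly on compacta, by \eqref{altexp} and the explicit plane wave $\mathcal{W}(z,\xi) = q^{(\rho - \xi,\, \widetilde{\rho} + w_0 z)}$; the rescaling factor $q^{m(\rho - \xi, w_0\widetilde{\delta}_i)}$ depends on neither $v$ nor the relevant Weyl group data. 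The prefactor on the right is what produces the one-variable plane wave $\mathcal{W}_i$ and, via $w_0\alpha_i = -\alpha_{i^*}$ together with the $\rho$-identities $(\rho,\alpha_{i^*}^\vee) = (\kappa_i + \kappa_{\alpha_i^{(1)}})/\mu_i$ and $(\widetilde{\rho},\alpha_{i^*}) = \kappa_i + \kappa_{2\alpha_i}$, the spectral argument $\widetilde{\alpha}_{i^*}(\xi)$.

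For \textbf{(i)}: apply this to the connection relation of Definition/Theorem \ref{THM3} with $\sigma = s_i$, $\tau_2 = e$, which by Proposition \ref{zero} reads $\Phi(s_i z,\xi) = m_{e,e}^{s_i}(z,\xi)\Phi(z,\xi) + m_{s_{i^*},e}^{s_i}(z,\xi)\Phi(z,s_{i^*}\xi)$. Substitute $z \mapsto z - m\widetilde{\delta}_i$ (the connection coefficients are unchanged, being in $\mathcal{F}$, and $\alpha_i(s_i(z - m\widetilde{\delta}_i)) = -\alpha_i(z)$), multiply through by $q^{m(\rho - \xi, w_0\widetilde{\delta}_i)}$, and let $m \to \infty$. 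All three terms acquire the same meromorphic prefactor in the limit: for the $s_{i^*}$-term one uses $\widetilde{\alpha}_{i^*}(s_{i^*}\xi) = -\widetilde{\alpha}_{i^*}(\xi)$ and $(\xi - s_{i^*}\xi, w_0\widetilde{\delta}_i) = (\xi,\alpha_{i^*}^\vee)(w_0\alpha_{i^*}, \widetilde{\delta}_i) = -(\xi,\alpha_{i^*}^\vee)\,\alpha_i(\widetilde{\delta}_i) = 0$, so the $m$-dependent factor is common, and cancelling it yields (i).

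For \textbf{(ii)}: the eigenvalue equation $L\Phi(\cdot,\xi) = \bigl(\sum_{w\in W_0/W_{0,\psi}}q^{\psi(w^{-1}\xi)}\bigr)\Phi(\cdot,\xi)$ of a single RMKC operator cannot degenerate to \eqref{AWde}, since its eigenvalue is $W_0$-invariant in $\xi$ whereas the Askey--Wilson eigenvalue is only $s_{i^*}$-invariant. Instead one runs the limit on the bispectral quantum KZ side, where the cocycle $C_{(w,\widetilde{w})}$ is given explicitly by Theorem \ref{BqKZ}: restricting the bispectral quantum KZ equations \eqref{BqKZdef} to the translations $\nu \in \mathbb{Z}\widetilde{\alpha}_i$ (and $\lambda = 0$), writing $\tau(\widetilde{\alpha}_i)$ as a word in the simple reflections, and passing to the limit $z \mapsto z - m\widetilde{\delta}_i$, $m \to \infty$, one reads off from \eqref{C} and the cocycle relation \eqref{cocC} that the system collapses to the rank-one bispectral quantum KZ equations for the Askey--Wilson initial datum with AW parameters $(a_i,b_i,c_i,d_i)$ and dual spectral variable $\widetilde{\alpha}_{i^*}(\xi)$; only a two-dimensional subspace of $\mathcal{V}$ (the one carrying the rank-one principal series in the $\alpha_i$-direction) survives, the $w_0$-twist enforced by the boundary condition $\Upsilon_{(0,0)} = v_{w_0}$ of $\Phi_{KZ}$ being what converts $\widetilde{\alpha}_i$ into $\widetilde{\alpha}_{i^*} = -w_0\widetilde{\alpha}_i$. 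Consequently $\Phi_{KZ}(z - m\widetilde{\delta}_i,\xi)$, suitably renormalized, tends to the rank-one asymptotically free quantum KZ solution; applying the rank-one difference Cherednik--Matsuo map and comparing normalizations — the factors of $\widetilde{\mathcal{S}}(\xi)$ and $\Gamma_0(\xi)$ over roots $\neq \alpha_i$ contribute only a $\xi$-dependent constant, which drops out of the difference equation in $x$ — identifies $\Phi_i(\cdot,\xi)$ with the rank-one basic Harish--Chandra series, which by the rank-one theory (\cite{StQ}) is the asymptotically free solution of \eqref{AWde}. Alternatively, \eqref{AWde} can be verified directly from the recursion for the coefficients $\Gamma_\alpha(\xi)$ of Theorem \ref{THM1}, which closes on the sub-family $\{\Gamma_{r\alpha_i}(\xi)\}_{r \geq 0}$ because $r\alpha_i - \beta \in Q^+$ with $\beta \in Q^+ \setminus \{0\}$ forces $\beta \in \mathbb{Z}_{>0}\alpha_i$.

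The main obstacle is \textbf{(ii)}: showing that the degeneration of the rank-$n$ bispectral quantum KZ data (equivalently, of the rank-$n$ recursion) reproduces \emph{exactly} the rank-one Askey--Wilson data — the parameters $(a_i,b_i,c_i,d_i)$ rather than their duals, the precise eigenvalue $q^{\widetilde{\alpha}_{i^*}(\xi)} + q^{-\widetilde{\alpha}_{i^*}(\xi)} - \widetilde{a}_i - \widetilde{a}_i^{-1}$ with its normalizing constant $\widetilde{a}_i$ inherited from $A_i$, and the $w_0$-twist producing $\widetilde{\alpha}_{i^*}(\xi)$ in place of $\widetilde{\alpha}_i(\xi)$ — together with keeping careful track of which of the generators (respectively, which summands of the recursion) survive the limit. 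Part (i) is comparatively routine once the limit formula above and the $\rho$-identities are in hand.
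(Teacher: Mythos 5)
Part \textbf{(i)} of your argument is essentially the paper's: substitute $z\mapsto z-m\widetilde{\delta}_i$ into the connection relation $\Phi(s_iz,\xi)=m_{e,e}^{s_i}(z,\xi)\Phi(z,\xi)+m_{s_{i^*},e}^{s_i}(z,\xi)\Phi(z,s_{i^*}\xi)$, use the $\widetilde{\Lambda}\times\Lambda$-invariance of the connection coefficients, renormalize by the plane wave, and pass to the limit; your bookkeeping of the $m$-dependent exponential factors (in particular $(\xi-s_{i^*}\xi,w_0\widetilde{\delta}_i)=0$) is correct and matches the paper's use of the ratios $\mathcal{W}(s_iz,\xi)/\mathcal{W}(z,\xi)$ and $\mathcal{W}(s_iz,\xi)/\mathcal{W}(z,s_{i^*}\xi)$.

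For part \textbf{(ii)} there is a genuine gap, and you have in effect flagged it yourself: both of your proposed routes (degenerating the bispectral quantum KZ system to a rank-one system, or closing the coefficient recursion on $\{\Gamma_{r\alpha_i}(\xi)\}_{r\geq 0}$) stop exactly at the point where the real work begins, namely the identification of the degenerate equation with the Askey--Wilson equation \eqref{AWde} with the stated parameters and eigenvalue. The paper does \emph{not} avoid the operator degeneration on the grounds you give (the non-$W_0$-invariance of the AW eigenvalue); it degenerates the eigenvalue equation for $L=L_{\widetilde{\psi}}$ directly, via \cite[Prop.~3.13]{LS}, and the non-$s_{i^*}$-invariant part of the eigenvalue is absorbed into constants such as $\textup{cst}_i$. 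The crucial subtlety your sketch misses is that this degeneration does not in general produce $\mathcal{M}_i$: it produces a first-order (step $\mu_i$) operator $\mathcal{N}_i$ when $\alpha_i\notin W_0\psi$, a \emph{quadratic} relation in $\mathcal{N}_i$ when $\alpha_i\in W_0\psi$ and $(\widetilde{Q},\widetilde{\alpha}_i^\vee)=\mathbb{Z}$, and $\mathcal{M}_i$ itself only when $\alpha_i\in W_0\psi$ and $(\widetilde{Q},\widetilde{\alpha}_i^\vee)=2\mathbb{Z}$. One then needs the factorization $\mathcal{M}_i-q^{\widetilde{\alpha}_{i^*}(\xi)}-q^{-\widetilde{\alpha}_{i^*}(\xi)}+\widetilde{a}_i+\widetilde{a}_i^{-1}=(\mathcal{N}_i+\mu)(\mathcal{N}_i-\mu)$ with $\mu=q^{\widetilde{\alpha}_{i^*}(\xi)/2}+q^{-\widetilde{\alpha}_{i^*}(\xi)/2}$ (valid because $c_i=q_ia_i$, $d_i=q_ib_i$ in those cases), and, in the quasi-miniscule case, a uniqueness argument identifying $\Phi_i(\cdot,\xi)$ with the asymptotically free $\mathcal{N}_i$-eigenfunction, whose \emph{existence} rests on a quadratic transformation formula for very-well-poised ${}_8\phi_7$ series from \cite[\S 5]{StQ}. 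Without this case analysis and the factorization/uniqueness step, neither of your sketches yields \eqref{AWde}; in particular the observation that the recursion closes on $\{\Gamma_{r\alpha_i}\}$ (which is correct) only shows that the limit satisfies \emph{some} rank-one recursion, not the AW one.
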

%%%%%%%%%%%%%%%%%%%%%%%%%%%%%%%%%%%%%%%%%%%%%%%%%%%%%%%%%%%%%%%%%%%
\begin{proof}
{\bf (i)} By Proposition \ref{zero},
\[
\Phi(s_iz,\xi)=m_{e,e}^{s_i}(z,\xi)\Phi(z,\xi)+
m_{s_{i^*},e}^{s_i}(z,\xi)\Phi(z,s_{i^*}\xi).
\]
The result now follows by multiplying both sides of this identity by
\[
\frac{\mathcal{W}_i(-\alpha_i(z),\widetilde{\alpha}_{i^*}(\xi))}
{\mathcal{W}(s_iz,\xi)},
\]
replacing $z$ by $z-m\widetilde{\delta}_i$ and taking the limit
$m\rightarrow\infty$, using that the connection coefficients
are $\widetilde{\Lambda}\times\Lambda$-translation invariant and that
\[
\frac{\mathcal{W}(s_iz,\xi)}{\mathcal{W}(z,\xi)}=
\frac{\mathcal{W}_i(-\alpha_i(z),\widetilde{\alpha}_{i^*}(\xi))}
{\mathcal{W}_i(\alpha_i(z),\widetilde{\alpha}_{i^*}(\xi))},\qquad
\frac{\mathcal{W}(s_iz,\xi)}{\mathcal{W}(z,s_{i^*}\xi)}=
\frac{\mathcal{W}_i(-\alpha_i(z),\widetilde{\alpha}_{i^*}(\xi))}
{\mathcal{W}_i(\alpha_i(z),-\widetilde{\alpha}_{i^*}(\xi))}.
\]
{\bf (ii)} Define a second difference operator $\mathcal{N}_i$ by
\begin{equation*}
\begin{split}
\bigl(\mathcal{N}_ig\bigr)(x)&:=B_i(x)g(x-\mu_i)+B_i(-x)g(y+\mu_i),\\
B_i(x)&:=\frac{(1-a_iq^{-x})(1-b_iq^{-x})}{q^{\kappa_i}(1-q^{-2x})}.
\end{split}
\end{equation*}
{}From (the proof of) \cite[Prop. 3.13]{LS} we obtain the following result.
\begin{enumerate}
\item[{\bf Case a.}] If $\alpha_i\not\in W_0\psi$ then
$\bigl(\widetilde{Q},\widetilde{\alpha}_i^\vee\bigr)=\mathbb{Z}$ and 
\begin{equation}\label{b1}
\mathcal{N}_i\Phi_i(\cdot,\xi)=\bigl(q^{\widetilde{\alpha}_{i^*}(\xi)/2}+
q^{-\widetilde{\alpha}_{i^*}(\xi)/2}\bigr)\Phi_i(\cdot,\xi)
\end{equation}
as meromorphic functions in $(\cdot,\xi)\in\mathbb{C}\times E_{\mathbb{C}}$.
\item[{\bf Case b.}] If $\alpha_i\in W_0\psi$ and 
$\bigl(\widetilde{Q},\widetilde{\alpha}_i^\vee\bigr)=\mathbb{Z}$ then
\begin{equation}\label{b2}
\bigl(\mathcal{N}_i+q^{\widetilde{\alpha}_{i^*}(\xi)/2}+
q^{-\widetilde{\alpha}_{i^*}(\xi)/2}+\textup{cst}_i\bigr)
\bigl(\mathcal{N}_i-q^{\widetilde{\alpha}_{i^*}(\xi)/2}
-q^{-\widetilde{\alpha}_{i^*}(\xi)}\bigr)\Phi_i(\cdot,\xi)=0
\end{equation}
as meromorphic functions in $\mathbb{C}\times E_{\mathbb{C}}$, 
where
\[
\textup{cst}_i:=q^{\widetilde{\alpha}_{i^*}(\xi)/2}
\sum_{\sigma}q^{-\widetilde{\psi}(\sigma^{-1}w_0\xi)}
\]
with the sum running over the $\sigma\in W_0/W_{0,\psi}$ satisfying
$\bigl(\sigma\widetilde{\psi},\widetilde{\alpha}_i^\vee\bigr)=-1$.
\item[{\bf Case c.}] If $\alpha_i\in W_0\psi$ and
$\bigl(\widetilde{Q},\widetilde{\alpha}_i^\vee\bigr)=2\mathbb{Z}$ then
\[
\mathcal{M}_i\Phi_i(\cdot,\xi)=\bigl(q^{\widetilde{\alpha}_{i^*}(\xi)}+
q^{-\widetilde{\alpha}_{i^*}(\xi)}-\widetilde{a}_i-\widetilde{a}_i^{-1}\bigr)
\Phi_i(\cdot,\xi)
\]
as meromorphic functions in $(\cdot,\xi)\in \mathbb{C}\times E_{\mathbb{C}}$.
\end{enumerate}
It thus suffices to show that in case {\bf a} (resp. case {\bf b}),
\eqref{b1} (resp. \eqref{b2}) implies \eqref{AWde}.

In both cases {\bf a} and {\bf b} we have
$\kappa_{\alpha_i^{(1)}}=\kappa_{\alpha_i}$,
$\kappa_{2\alpha_i^{(1)}}=\kappa_{2\alpha_i}$ 
since $\bigl(\widetilde{\Lambda},\widetilde{\alpha}_i^\vee\bigr)=\mathbb{Z}$,
hence $c_i=q_ia_i$, $d_i=q_ib_i$ for the corresponding AW parameters.
Consequently
\begin{equation*}
\begin{split}
\mathcal{M}_i-q^{\widetilde{\alpha}_{i^*}(\xi)}-
&q^{-\widetilde{\alpha}_{i^*}(\xi)}+\widetilde{a}_i+\widetilde{a}_i^{-1}=\\
&=\bigl(\mathcal{N}_i+q^{\widetilde{\alpha}_{i^*}(\xi)/2}+
q^{-\widetilde{\alpha}_{i^*}(\xi)/2}\bigr)
\bigl(\mathcal{N}_i-q^{\widetilde{\alpha}_{i^*}(\xi)/2}-
q^{-\widetilde{\alpha}_{i^*}(\xi)/2}\bigr),
\end{split}
\end{equation*}
see \cite[\S 4]{StQ}. This shows that \eqref{AWde} is correct for case
{\bf a}. 

\noindent
{\bf Case b.} Fix generic $\xi\in E_{\mathbb{C}}$ and write
\[
\mathcal{L}_i:=\bigl(\mathcal{N}_i+q^{\widetilde{\alpha}_{i^*}(\xi)/2}+
q^{-\widetilde{\alpha}_{i^*}(\xi)/2}+\textup{cst}_i\bigr)
\bigl(\mathcal{N}_i-q^{\widetilde{\alpha}_{i^*}(\xi)/2}
-q^{-\widetilde{\alpha}_{i^*}(\xi)}\bigr),
\]
so that $\mathcal{L}_i\Phi_i(\cdot,\xi)=0$.
By \cite[\S 5]{StQ}
there exists a unique meromorphic function $g(x)$ 
in $x\in\mathbb{C}$ satisfying
\begin{equation}\label{d}
g(x)=\frac{\mathcal{W}_i(x,\widetilde{\alpha}_{i^*}(\xi))}
{\mathcal{S}_i(x)\widetilde{\mathcal{S}}(\xi)}\sum_{r=0}^{\infty}g_rq^{-rx}
\qquad (g_0=\Gamma_0(\xi))
\end{equation}
with the series converging normally for $x$ in compacta of $\mathbb{C}$, 
such that 
\[
\mathcal{N}_ig=\bigl(q^{\widetilde{\alpha}_{i^*}(\xi)/2}
+q^{-\widetilde{\alpha}_{i^*}(\xi)/2}\bigr)g.
\]
Then $g$ also satisfies $\mathcal{L}_ig=0$, and, together with
the asymptotic properties \eqref{d} of $g$, this eigenvalue equation 
characterizes $g$. Hence $g=\Phi_i(\cdot,\xi)$. We conclude
that $\Phi_i(\cdot,\xi)$ satisfies \eqref{b1}. 
As for case {\bf a}, this implies that $\Phi_i(\cdot,\xi)$ 
also satisfies the desired eigenvalue equation \eqref{AWde}.
\end{proof}
%%%%%%%%%%%%%%%%%%%%%%%%%%%%%%%%%%%%%%%%%%%%%%%%%%%%%%%%%%%%%%%%%%
\begin{rema}
The factorization of the Askey-Wilson difference operator appearing in the
above proof relates to quadratic transformation formulas
for the associated eigenfunctions, see \cite[\S 5]{StQ}. 
In fact, the quadratic 
transformation formula \cite[5.1]{StQ} guarantees the existence of the
function $g$.
\end{rema}

%%%%%%%%%%%%%%%%%%%%%%%%%%%%%%%%%%%%%%%%%%%%%%%%%%%%%%%%%%%%%%%%%%
\subsection{Explicit expressions}
%%%%%%%%%%%%%%%%%%%%%%%%%%%%%%%%%%%%%%%%%%%%%%%%%%%%%%%%%%%%%%%%%
For generic $b_j$
the ${}_{r+1}\phi_r$ basic hypergeometric series is defined by
\[
{}_{r+1}\phi_r\left(
\begin{matrix} a_1,a_2,\ldots,a_{r+1}\\ b_1,b_2,\ldots,b_r\end{matrix};
q,z\right):=
\sum_{j=0}^{\infty}\frac{\bigl(a_1,a_2,\ldots,a_{r+1};q\bigr)_j}
{\bigl(q,b_1,\ldots,b_r;q\bigr)_j}z^j,\qquad |z|<1,
\]
where $\bigl(a_1,\ldots,a_s;q\bigr)_j=
\prod_{r=1}^s\prod_{i=0}^{j-1}(1-a_rq^i)$ for $j\in\mathbb{Z}_{\geq 0}\cup
\{\infty\}$ (empty products are equal to one by convention).
The very-well-poised
${}_8\phi_7$ series is defined by
\begin{equation*}
\begin{split}
{}_8W_7\bigl(\alpha_0;\alpha_1,\alpha_2,\alpha_3,\alpha_4,\alpha_5;q,z\bigr)&=
{}_8\phi_7\left(\begin{matrix}
\alpha_0,q\alpha_0^{\frac{1}{2}},-q\alpha_0^{\frac{1}{2}},
\alpha_1,\ldots,\alpha_5\\
\alpha_0^{\frac{1}{2}},-\alpha_0^{-\frac{1}{2}},q\alpha_0/\alpha_1,\ldots,
q\alpha_0/\alpha_5\end{matrix};q,z\right)\\
&=\sum_{r=0}^{\infty}\frac{1-\alpha_0q^{2r}}{1-\alpha_0}z^r
\prod_{j=0}^5\frac{\bigl(\alpha_j;q\bigr)_r}
{\bigl(q\alpha_0/\alpha_j;q\bigr)_r},\qquad |z|<1.
\end{split}
\end{equation*}
If $z=\alpha_0^2q^2/\alpha_1\alpha_2\alpha_3\alpha_4\alpha_5$ 
(which is the case below) then
it has a meromorphic continuation to $(\alpha_0,\ldots,\alpha_5)\in
\bigl(\mathbb{C}^*\bigr)^6$ by \cite[III.36]{GR}, for which we will use
the same notation.

The results from the previous subsection characterize $\Phi_i(\cdot,\xi)$
as asymptotically free eigenfunction of the AW second order difference
operator $\mathcal{M}_i$. In \cite[Prop. 2.2]{StQ} an explicit expression
of this eigenfunction has been obtained (it essentially
traces back to \cite{IR}). In our present notations, the result is as follows.
%%%%%%%%%%%%%%%%%%%%%%%%%%%%%%%%%%%%%%%%%%%%%%%%%%%%%%%%%%%%%%%%%%%%%%%%%%
\begin{prop}\label{explicitPhi}
We have
\begin{equation*}
\begin{split}
&\Phi_{i}(x,\xi)=\mathcal{W}_i(x,\widetilde{\alpha}_{i^*}(\xi))
\frac{\Gamma_0(\xi)}{\mathcal{S}_i(x)\widetilde{\mathcal{S}}(\xi)}\\
&\,\,\times
\frac{\bigl(\frac{q_i^2a_i}{\widetilde{a}_i}q^{-x-\widetilde{\alpha}_{i^*}(\xi)},
\frac{q_i^2b_i}{\widetilde{a}_i}q^{-x-\widetilde{\alpha}_{i^*}(\xi)},
\frac{q_i^2c_i}{\widetilde{a}_i}q^{-x-\widetilde{\alpha}_{i^*}(\xi)},
\frac{q_i^2\widetilde{a}_i}{d_i}q^{-x-\widetilde{\alpha}_{i^*}(\xi)},
d_iq^{-x};q_i^2\bigr)_{\infty}}
{\bigl(
\frac{q_i^4}{d_i}q^{-x-2\widetilde{\alpha}_{i^*}(\xi)};q_i^2\bigr)_{\infty}}\\
&\,\,\times{}_8W_7\Bigl(\frac{q_i^2}{d_i}q^{-x-2\widetilde{\alpha}_{i^*}(\xi)};
\frac{q_i^2}{\widetilde{a}_i}q^{-\widetilde{\alpha}_{i^*}(\xi)},
\frac{q_i^2}{\widetilde{d}_i}q^{-\widetilde{\alpha}_{i^*}(\xi)},
\widetilde{b}_iq^{-\widetilde{\alpha}_{i^*}(\xi)},
\widetilde{c}_iq^{-\widetilde{\alpha}_{i^*}(\xi)},
\frac{q_i^2}{d_i}q^{-x};q_i^2,d_iq^{-x}\Bigr)
\end{split}
\end{equation*}
as meromorphic functions in $(x,\xi)\in\mathbb{C}\times E_{\mathbb{C}}$.
\end{prop}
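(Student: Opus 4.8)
The plan is to recognize $\Phi_i(\cdot,\xi)$ as the asymptotically free eigenfunction of the Askey--Wilson second order difference operator $\mathcal{M}_i$ and then to quote the explicit expression for the latter. By Proposition \ref{XYX}{\bf (ii)}, $\Phi_i(\cdot,\xi)$ solves $\mathcal{M}_i\Phi_i(\cdot,\xi)=E_i(\xi)\Phi_i(\cdot,\xi)$ with $E_i(\xi):=q^{\widetilde{\alpha}_{i^*}(\xi)}+q^{-\widetilde{\alpha}_{i^*}(\xi)}-\widetilde{a}_i-\widetilde{a}_i^{-1}$, while by its definition and Theorem \ref{THM1} it has the form $\Phi_i(x,\xi)=\frac{\mathcal{W}_i(x,\widetilde{\alpha}_{i^*}(\xi))}{\mathcal{S}_i(x)\widetilde{\mathcal{S}}(\xi)}\sum_{r\geq 0}\Gamma_{r\alpha_i}(\xi)q^{-rx}$, the power series converging normally on compacta of $\mathbb{C}\times E_{\mathbb{C}}$ with leading coefficient $\Gamma_0(\xi)=\prod_{\alpha\in R_0^+}\bigl(q_\alpha^2q^{-2\widetilde{\alpha}(\xi)};q_\alpha^2\bigr)_{\infty}$. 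Substituting this shape into the difference equation and comparing coefficients of $q^{-rx}$ shows that for generic $\xi$ the coefficients $\Gamma_{r\alpha_i}(\xi)$ ($r\geq 1$) are determined recursively in terms of $\Gamma_0(\xi)$, so $\Phi_i(\cdot,\xi)$ is the unique meromorphic function of this shape with $\mathcal{M}_i$-eigenvalue $E_i(\xi)$ and leading coefficient $\Gamma_0(\xi)$ (cf. the uniqueness used in the proof of Proposition \ref{XYX}{\bf (ii)}, Case {\bf b}, and in \cite[\S 5]{StQ}).

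It then suffices to check that the right hand side of the asserted identity --- which is precisely the explicit asymptotically free Askey--Wilson eigenfunction written down in \cite[Prop.~2.2]{StQ}, going back to \cite{IR}, with Askey--Wilson parameters $(a_i,b_i,c_i,d_i)$ and spectral point $q^{-\widetilde{\alpha}_{i^*}(\xi)}$ (equivalently $q^{\widetilde{\alpha}_{i^*}(\xi)}$, by $2\mu_i$-translation invariance via \eqref{fe}) --- has these same three properties. First, that it is annihilated by $\mathcal{M}_i-E_i(\xi)$: this is established in \cite[\S 2]{StQ} once the spectral point is parametrized so that the Askey--Wilson eigenvalue becomes $E_i(\xi)$. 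Second, that its $q^{-x}$-expansion has the prescribed shape with leading coefficient $\Gamma_0(\xi)$: the plane wave $\mathcal{W}_i(x,\widetilde{\alpha}_{i^*}(\xi))$ and the factor $1/\mathcal{S}_i(x)$ appear explicitly (cf. \cite[(2.1)]{StQ}), and as $q^{-x}\to 0$ the remaining infinite products $\bigl(\frac{q_i^2a_i}{\widetilde{a}_i}q^{-x-\widetilde{\alpha}_{i^*}(\xi)},\ldots,d_iq^{-x};q_i^2\bigr)_{\infty}$ and $\bigl(\frac{q_i^4}{d_i}q^{-x-2\widetilde{\alpha}_{i^*}(\xi)};q_i^2\bigr)_{\infty}$, as well as the ${}_8W_7$ (whose argument $d_iq^{-x}$ tends to $0$), all tend to $1$, leaving leading coefficient exactly $\Gamma_0(\xi)$ rather than a scalar multiple of it. Third, that the series argument $z=d_iq^{-x}$ equals $\alpha_0^2q_i^4/(\alpha_1\alpha_2\alpha_3\alpha_4\alpha_5)$ for the displayed parameters --- a short computation using \eqref{AW} and \eqref{dualAW} --- so that the meromorphic continuation \cite[III.36]{GR} invoked just above the statement is the one at play. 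By the uniqueness of the first paragraph this yields the identity of meromorphic functions in $(x,\xi)\in\mathbb{C}\times E_{\mathbb{C}}$.

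The essential content beyond citing \cite{StQ,IR,GR} is the bookkeeping in the second and third checks: maintaining the dictionary between the Askey--Wilson parameter conventions of \cite{StQ,IR} and the present $a_i,b_i,c_i,d_i$ and their duals, and pinning down the normalization so that the leading coefficient comes out to be $\Gamma_0(\xi)$ on the nose. There is no genuine analytic obstacle: the rank one eigenfunction problem and its solution by a very-well-poised ${}_8W_7$ are classical \cite{IR,GR}, and the case distinction (Cases {\bf a}, {\bf b}, {\bf c}) in Proposition \ref{XYX}{\bf (ii)} has already been reduced to the single Askey--Wilson equation. So the proof comes down to verifying that both sides of the formula are annihilated by $\mathcal{M}_i-E_i(\xi)$ and have the same leading behaviour as $q^{-x}\to 0$.
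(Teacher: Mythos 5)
Your proposal is correct and follows essentially the same route as the paper, which simply observes that Proposition \ref{XYX} characterizes $\Phi_i(\cdot,\xi)$ as the asymptotically free eigenfunction of $\mathcal{M}_i$ and then quotes the explicit ${}_8W_7$ expression from \cite[Prop.~2.2]{StQ}. Your write-up merely makes explicit the uniqueness argument (recursion on the coefficients of $q^{-rx}$), the normalization check, and the verification of the very-well-poised balancing condition, all of which the paper leaves implicit.
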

%%%%%%%%%%%%%%%%%%%%%%%%%%%%%%%%%%%%%%%%%%%%%%%%%%%%%%%%%%%%%%%%%%%%%
We are now in a position to determine the connection coefficients
explicitly.

\noindent
{\bf Proof of Theorem \ref{THM4}.} Define meromorphic functions
$n_{\pm}^{s_i}(x,\xi)$ in $(x,\xi)\in\mathbb{C}\times E_{\mathbb{C}}$ by
\begin{equation*}
\begin{split}
n_+^{s_i}(x,\xi)&:=\frac{\mathfrak{e}_{\alpha_i}(x,\widetilde{\alpha}_{i^*}(\xi))-
\widetilde{\mathfrak{e}}_{\alpha_i}(\widetilde{\alpha}_{i^*}(\xi),x)}
{\widetilde{\mathfrak{e}}_{\alpha_i}(\widetilde{\alpha}_{i^*}(\xi),-x)},\\
n_-^{s_i}(x,\xi)&:=\frac{\mathfrak{e}_{\alpha_i}(x,
-\widetilde{\alpha}_{i^*}(\xi))}{\widetilde{\mathfrak{e}}_{\alpha_i}
(\widetilde{\alpha}_{i^*}(\xi),-x)}.
\end{split}
\end{equation*} 
It follows from \cite[Cor. 2.6]{StQ} that
\begin{equation}\label{rankoneversion}
\Phi_i(-x,\xi)=n_+^{s_i}(x,\xi)\Phi_i(x,\xi)+n_-^{s_i}(x,\xi)
\Phi_i(x,s_{i^*}\xi)
\end{equation}
as meromorphic functions in $(x,\xi)\in\mathbb{C}\times E_{\mathbb{C}}$.
Note furthermore that the $n_{\pm}^{s_i}(\cdot,\xi)$ are $2\mu_i$-translation 
invariant. 

For meromorphic functions $f$ and $g$ on $\mathbb{C}$ define
the Wronskian $\lbrack f,g\rbrack$ to be the meromorphic function
\[
\lbrack f,g\rbrack(x):=w_i(x)A_i(x)\bigl(f(x-2\mu_i)g(x)-f(x)g(x-2\mu_i)\bigr),
\]
with $w_i(x)$ the weight function \cite{AW} of the 
Askey-Wilson polynomials,
\[
w_i(x):=\frac{\bigl(q^{2x},q^{-2x};q_i^2\bigr)_{\infty}}
{\bigl(a_iq^{x},a_iq^{-x},b_iq^x,b_iq^{-x},c_iq^x,c_iq^{-x},d_iq^x,d_iq^{-x};
q_i^2\bigr)_{\infty}}.
\]
Since $w_i(x+2\mu_i)A_i(x+2\mu_i)=w_i(x)A_i(-x)$ and 
\begin{equation*}
\begin{split}
\bigl(\mathcal{M}_if\bigr)(x)g(x)-f(x)\bigl(\mathcal{M}_ig\bigr)(x)&=
A_i(x)\bigl(f(x-2\mu_i)g(x)-f(x)g(x-2\mu_i)\bigr)\\
&+A_i(-x)\bigl(f(x+2\mu_i)g(x)-f(x)g(x+2\mu_i)\bigr)
\end{split}
\end{equation*}
it follows that $\lbrack f,g\rbrack(\cdot)$ is $2\mu_i$-translation invariant
if $f$ and $g$ are eigenfunctions of $\mathcal{M}_i$ with the same eigenvalue.
Using the asymptotic expansion of $\Phi_i(\cdot,\xi)$ we thus get
\begin{equation*}
\begin{split}
\lbrack \Phi_i(\cdot,\xi),&\Phi_i(\cdot,s_{i^*}\xi)\rbrack(x)=
\lim_{m\rightarrow\infty}
\lbrack \Phi_i(\cdot,\xi),\Phi_i(\cdot,s_{i^*}\xi)\rbrack(x-2m\mu_i)\\
&=\bigl(q^{-\widetilde{\alpha}_{i^*}(\xi)}-q^{\widetilde{\alpha}_{i^*}(\xi)}\bigr)
\frac{\mathcal{W}_i(x,\widetilde{\alpha}_{i^*}(\xi))
\mathcal{W}_i(x,-\widetilde{\alpha}_{i^*}(\xi))\Gamma_0(\xi)
\Gamma_0(s_{i^*}\xi)\theta(q^{2x};q_i^2)}
{\widetilde{\mathcal{S}}(\xi)\widetilde{\mathcal{S}}(s_{i^*}\xi)
\theta(a_iq^x,b_iq^x,c_iq^x,d_iq^x;q_i^2)}.
\end{split}
\end{equation*}
Write $\Phi_i^-(x,\xi):=\Phi_i(-x,\xi)$.
{}From \eqref{rankoneversion} we now conclude that
%it now follows that
\begin{equation*}
\begin{split}
n_{+}^{s_i}(x,\xi)&=
\frac{\lbrack \Phi_i^-(\cdot,\xi),\Phi_i(\cdot,s_{i^*}\xi)\rbrack(x)}
{\lbrack \Phi_i(\cdot,\xi),\Phi_i(\cdot,s_{i^*}\xi)\rbrack(x)},\\
n_-^{s_i}(x,\xi)&=-
\frac{\lbrack \Phi_i^-(\cdot,\xi),\Phi_i(\cdot,\xi)\rbrack(x)}
{\lbrack \Phi_i(\cdot,\xi),\Phi_i(\cdot,s_{i^*}\xi)\rbrack(x)}
\end{split}
\end{equation*}
as meromorphic functions in $(x,\xi)\in\mathbb{C}\times E_{\mathbb{C}}$.
On the other hand, since $m_{\tau_1,\tau_2}^{s_i}(\cdot,\xi)$ is 
$\widetilde{\Lambda}$-translation invariant and 
$\alpha_i(\widetilde{\alpha}_i)=2\mu_i$, we have by Proposition
\ref{XYX}{\bf (i)},
\begin{equation*}
\begin{split}
m_{e,e}^{s_i}(z,\xi)&=
\frac{\lbrack \Phi_i^-(\cdot,\xi),\Phi_i(\cdot,s_{i^*}\xi)\rbrack(\alpha_i(z))}
{\lbrack \Phi_i(\cdot,\xi),\Phi_i(\cdot,s_{i^*}\xi)\rbrack(\alpha_i(z))},\\
m_{s_{i^*},e}^{s_i}(z,\xi)&=-
\frac{\lbrack \Phi_i^-(\cdot,\xi),\Phi_i(\cdot,\xi)\rbrack(\alpha_i(z))}
{\lbrack \Phi_i(\cdot,\xi),\Phi_i(\cdot,s_{i^*}\xi)\rbrack(\alpha_i(z))}
\end{split}
\end{equation*}
as meromorphic functions in $(z,\xi)\in E_{\mathbb{C}}\times E_{\mathbb{C}}$.
Hence
\[
m_{e,e}^{s_i}(z,\xi)=n_+^{s_i}(\alpha_i(z),\xi),\qquad
m_{s_{i^*},e}^{s_i}(z,\xi)=n_-^{s_i}(\alpha_i(z),\xi)
\]
as meromorphic functions in $(z,\xi)\in E_{\mathbb{C}}\times E_{\mathbb{C}}$,
which completes the proof of the theorem.
$\qquad\qquad\qquad\qquad\qquad\qquad\qquad\qquad\qquad
\qquad\qquad\qquad\qquad\qquad\qquad\qquad\quad\Box$

%%%%%%%%%%%%%%%%%%%%%%%%%%%%%%%%%%%%%%%%%%%%%%%%%%%%%%%%%%%%%%%%%%%%%
\section{Higher rank addition formula for theta functions}
\label{qcsection}
%%%%%%%%%%%%%%%%%%%%%%%%%%%%%%%%%%%%%%%%%%%%%%%%%%%%%%%%%%%%%%%%%%%%

Recall from Subsection \ref{csection} that
the basic hypergeometric function $\mathcal{E}_{sph}$
is a distinguished Weyl group invariant
solution $\mathcal{E}_{sph}\in\textup{Sol}_{RMKC}^{W_0\times W_0}$
of the bispectral problem of the RMKC operators in case the root system
datum is of the form $D=(R_0,\Delta_0,t,\Lambda,\Lambda)$ and $\kappa_a>0$
for all $a\in R$. It has 
an explicit $c$-function expansion (see \eqref{cfunctionexpansion})
with quantum $c$-function $\mathfrak{c}_{sph}$ as defined in 
Definition \ref{quantumc}. In particular, the quantum $\mathfrak{c}_{sph}$
satisfies \eqref{relationsc}. In this section we investigate
the equations \eqref{relationsc} directly.

%%%%%%%%%%%%%%%%%%%%%%%%%%%%%%%%%%%%%%%%%%%%%%%%%%%%%%%%%%%%%%%%%%%%%%
\begin{rema}\label{notationsrelations}
The notations in \cite{StSph} are matched to the present ones as follows:
$t,\gamma,k_a,\gamma_0$ in \cite{StSph} correpond to $q^z,q^{-\xi},q^{\kappa_a},
q^{-\rho}$ (where $q^z$ is viewed as element of the complex algebraic
torus $\textup{Hom}(\Lambda,\mathbb{C}^*)$ by $\lambda\mapsto
q^{(\lambda,z)}$), and the basic Harish-Chandra series $\Phi(t,\gamma)$
in \cite[4.6]{StSph} matches with our renormalized Harish-Chandra series
\[
\Bigl(\sum_{w\in W_0}q^{2\kappa_w}\Bigr)^{-1}\mathcal{W}(z,\xi)^{-1}
\frac{\vartheta_\Lambda(z+w_0\xi)}{\vartheta_\Lambda(\rho-z)\vartheta(\rho-\xi)}
\Phi(z,\xi).
\]
\end{rema}
%%%%%%%%%%%%%%%%%%%%%%%%%%%%%%%%%%%%%%%%%%%%%%%%%%%%%%%%%%%%%%%%%%

Let $(D,\kappa,q)$ be an arbitrary choice of initial datum.
Let $\Xi\in\mathcal{M}$ and write
\[
\mathfrak{c}_\Xi(z,\xi):=
\frac{\Xi(z,\xi)}{\mathcal{W}(z,\xi)}
\prod_{\alpha\in R_0^+}\frac{\theta\bigl(\widetilde{a}_\alpha 
q^{\widetilde{\alpha}(\xi)},\widetilde{b}_\alpha q^{\widetilde{\alpha}(\xi)},
\widetilde{c}_\alpha q^{\widetilde{\alpha}(\xi)},
\widetilde{d}_\alpha q^{\widetilde{\alpha}(\xi)};q_\alpha^2\bigr)}
{\theta\bigl(q^{2\widetilde{\alpha}(\xi)};q_\alpha^2\bigr)}.
\]
Using $q^{2(\lambda,\widetilde{\rho})}=\prod_{\alpha\in R_0^+}
a_\alpha^{(\lambda,\alpha^\vee)}$ and the functional equation \eqref{fe},
we have
$\mathfrak{c}_\Xi\in\mathcal{F}$ if and only if
\begin{equation}\label{quasiinvariance}
\begin{split}
\Xi(z+\mu,\xi)&=q^{(\rho-\xi,w_0\mu)}\Xi(z,\xi)\qquad\forall\,
\mu\in\widetilde{\Lambda},\\
\Xi(z,\xi+\lambda)&=q^{(\lambda,\widetilde{\rho}-w_0z)}\Xi(z,\xi)\qquad
\forall\,\lambda\in\Lambda
\end{split}
\end{equation}
(use for the theta function factors of $c_\Xi$ 
corresponding to $\alpha\in R_0^+$
with
$\bigl(\Lambda,\alpha^\vee\bigr)=\mathbb{Z}$ the fact that
$\widetilde{c}_\alpha=q_\alpha\widetilde{a}_\alpha$ and 
$\widetilde{d}_\alpha=q_\alpha\widetilde{b}_\alpha$). 
%%%%%%%%%%%%%%%%%%%%%%%%%%%%%%%%%%%%%%%%%%%%%%%%%%%%%%%%%%%%%%%%%
\begin{rema}
%%
%% $\theta$'s verandert in $\psi$'s!!!
%%
In the twisted equal lattice case $D=(R_0,\Delta,t,\Lambda,\Lambda)$
we have $\psi=\theta$ and $\mathfrak{c}_{sph}=\mathfrak{c}_{\Xi_{sph}}$ with 
\begin{equation}\label{xisph}
\Xi_{sph}(z,\xi):=\frac{\vartheta_\Lambda(\rho+(\kappa_{2a_0}-\kappa_0)
\delta_s^\vee+z+w_0\xi)}
{\vartheta_\Lambda((\kappa_{2a_0}-\kappa_0)\delta_s^\vee+z)
\vartheta_\Lambda((\kappa_{2a_0}-\kappa_{2\psi})\delta_s^\vee-\xi)}.
\end{equation}
The quasi-invariance properties \eqref{quasiinvariance}
for $\Xi_{sph}$ can be directly checked using the functional equations
\[
\vartheta_\Lambda(z+\lambda)=
q^{-\frac{|\lambda|^2}{2}}q^{-(\lambda,z)}\vartheta_\Lambda(z)
\qquad
\forall\, \lambda\in\Lambda
\]
and noting that 
$\rho+(\kappa_{2\psi}-\kappa_0)\delta_s^\vee=\widetilde{\rho}$.
\end{rema}
%%%%%%%%%%%%%%%%%%%%%%%%%%%%%%%%%%%%%%%%%%%%%%%%%%%%%%%%%%%%%%%%%%%%%%%
\begin{lem}\label{BBB}
Let $i\in\{1,\ldots,n\}$ such that $\bigl(\Lambda,\alpha_i^\vee\bigr)=
\mathbb{Z}=\bigl(\widetilde{\Lambda},\widetilde{\alpha}_i^\vee\bigr)$.
Suppose $\Xi\in\mathcal{M}$
satisfies \eqref{quasiinvariance}. Then $\mathfrak{c}_{\Xi}\in\mathcal{F}$
satisfies \eqref{relationsc}, i.e.
\begin{equation}\label{relationscXi}
\mathfrak{c}_{\Xi}(z,\xi)=m_{e,e}^{s_i}(z,\xi)\mathfrak{c}_{\Xi}(s_iz,\xi)+
m_{s_{i^*},e}^{s_i}(z,s_{i^*}\xi)\mathfrak{c}_{\Xi}(s_iz,s_{i^*}\xi),
\end{equation}
if and only if
\begin{equation}\label{towardsRiemann}
\begin{split}
\theta\bigl(q^{\widetilde{\alpha}_{i^*}(\xi)},q^{2\kappa_i-\alpha_i(z)};q_i\bigr)
\Xi(z,\xi)
&=\theta\bigl(q^{2\kappa_i},q^{\widetilde{\alpha}_{i^*}(\xi)-\alpha_i(z)};q_i\bigr)
\Xi(s_iz,\xi)\\
&-q^{\widetilde{\alpha}_{i^*}(\xi)}
\theta\bigl(q^{2\kappa_i-\widetilde{\alpha}_{i^*}(\xi)},
q^{-\alpha_i(z)};q_i\bigr)\Xi(s_iz,s_{i^*}\xi).
\end{split}
\end{equation}
\end{lem}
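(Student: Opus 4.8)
The plan is to reduce the biconditional of Lemma~\ref{BBB} to the explicit rank-one formulas for the connection coefficients in the continuous $q$-ultraspherical case, namely Proposition~\ref{cUScase}. Under the hypothesis $\bigl(\Lambda,\alpha_i^\vee\bigr)=\mathbb{Z}=\bigl(\widetilde{\Lambda},\widetilde{\alpha}_i^\vee\bigr)$ we have $q_i=q^{\mu_i}$ and the simplified expressions
\[
m_{e,e}^{s_i}(z,\xi)=q^{\frac{1}{\mu_i}(2\kappa_i-\widetilde{\alpha}_{i^*}(\xi))\alpha_i(z)}\frac{\theta\bigl(a_i,q^{\widetilde{\alpha}_{i^*}(\xi)-\alpha_i(z)};q_i\bigr)}{\theta\bigl(q^{\widetilde{\alpha}_{i^*}(\xi)},a_iq^{-\alpha_i(z)};q_i\bigr)},\qquad
m_{s_{i^*},e}^{s_i}(z,\xi)=q^{\frac{2\kappa_i}{\mu_i}(\alpha_i(z)-\widetilde{\alpha}_{i^*}(\xi))}\frac{\theta\bigl(a_iq^{-\widetilde{\alpha}_{i^*}(\xi)},q^{-\alpha_i(z)};q_i\bigr)}{\theta\bigl(a_iq^{-\alpha_i(z)},q^{-\widetilde{\alpha}_{i^*}(\xi)};q_i\bigr)},
\]
where $a_i=q^{2\kappa_i}$ since $\kappa_{2\alpha_i}=\kappa_{\alpha_i^{(1)}}=\kappa_{2\alpha_i^{(1)}}=\kappa_{\alpha_i}$ in this case. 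First I would substitute the definition $\mathfrak{c}_{\Xi}(z,\xi)=\mathcal{W}(z,\xi)^{-1}\Xi(z,\xi)\prod_{\alpha\in R_0^+}\theta(\widetilde{a}_\alpha q^{\widetilde{\alpha}(\xi)},\ldots;q_\alpha^2)/\theta(q^{2\widetilde{\alpha}(\xi)};q_\alpha^2)$ into \eqref{relationscXi} and divide through. The key observation is that the product over $R_0^+$ and the plane-wave prefactors on the three terms differ only in the factors indexed by roots in the $W_{0,i}$-orbit of $\alpha_i$, together with the $s_{i^*}$-shift in $\xi$ affecting $\widetilde{\alpha}_{i^*}$; all other factors cancel because $s_i$ permutes $R_0^+\setminus\{\alpha_i\}$ and $s_{i^*}$ permutes $R_0^+\setminus\{\alpha_{i^*}\}$.

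Second, I would track the plane-wave factors carefully. Recall $\mathcal{W}(z,\xi)=q^{(\rho-\xi,\widetilde{\rho}+w_0z)}$, so $\mathcal{W}(s_iz,\xi)/\mathcal{W}(z,\xi)=q^{-(\rho-\xi,w_0(z-s_iz))}=q^{(\rho-\xi,w_0\alpha_i)\alpha_i^\vee(z)/\ldots}$; more cleanly, using $w_0\alpha_i=-\alpha_{i^*}$ one gets a factor involving $q^{\alpha_i(z)}$ and $q^{\widetilde{\alpha}_{i^*}(\xi)}$ whose exponents, after combining with the exponential prefactors $q^{\cdots\alpha_i(z)}$ visible in $m_{e,e}^{s_i}$ and $m_{s_{i^*},e}^{s_i}$ and the quasi-invariance relations \eqref{quasiinvariance} applied to the $s_{i^*}$-shift $\xi\mapsto\xi-\widetilde{\alpha}_{i^*}(\xi)\widetilde{\alpha}_{i^*}^\vee$, will precisely reproduce the monomial prefactors $1$, $q^{2\kappa_i-\alpha_i(z)}$, $q^{-\alpha_i(z)}$ and the $-q^{\widetilde{\alpha}_{i^*}(\xi)}$ sign appearing in \eqref{towardsRiemann}. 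This bookkeeping is the step where I expect the real friction: getting every power of $q$ to match requires using $\widetilde{\rho}=\rho+(\kappa_{2\psi}-\kappa_0)\delta_s^\vee$ (as noted in the remark preceding the lemma) and the compatibility of the two lines of \eqref{quasiinvariance}, and it is easy to be off by a sign or a quadratic term from the functional equation \eqref{fe}.

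Third, once the monomial prefactors are stripped, what remains on the theta-function side is exactly the identity \eqref{Ridroot} of Proposition~\ref{addition} specialized appropriately, i.e.\ the statement that the three theta factors $\theta(q^{\widetilde{\alpha}_{i^*}(\xi)},q^{2\kappa_i-\alpha_i(z)};q_i)$, $\theta(q^{2\kappa_i},q^{\widetilde{\alpha}_{i^*}(\xi)-\alpha_i(z)};q_i)$ and $\theta(q^{2\kappa_i-\widetilde{\alpha}_{i^*}(\xi)},q^{-\alpha_i(z)};q_i)$ are the coefficients in \eqref{Ridroot} when $\Xi$ is replaced by $\vartheta_\Lambda$ of the corresponding argument. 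But I would avoid circularity: rather than invoking Proposition~\ref{addition}, I would argue that after the monomial normalization the relation \eqref{relationscXi} is \emph{equivalent} to \eqref{towardsRiemann}, term by term, because the three coefficient functions of $\Xi(z,\xi)$, $\Xi(s_iz,\xi)$, $\Xi(s_iz,s_{i^*}\xi)$ that emerge are literally $\theta(q^{\widetilde{\alpha}_{i^*}(\xi)},q^{2\kappa_i-\alpha_i(z)};q_i)$, $-\theta(q^{2\kappa_i},q^{\widetilde{\alpha}_{i^*}(\xi)-\alpha_i(z)};q_i)$, $+q^{\widetilde{\alpha}_{i^*}(\xi)}\theta(q^{2\kappa_i-\widetilde{\alpha}_{i^*}(\xi)},q^{-\alpha_i(z)};q_i)$ up to a common nonzero meromorphic factor (namely a product of $\theta$'s and a monomial that is manifestly nonzero and independent of $\Xi$). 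Dividing \eqref{relationscXi} by that common factor gives \eqref{towardsRiemann}, and conversely. The final write-up would therefore consist of: (1) substitute and cancel the $R_0^+\setminus(W_{0,i}\alpha_i)$ factors; (2) compute the common prefactor explicitly using \eqref{quasiinvariance} and \eqref{fe}; (3) identify the three residual $\theta$-coefficients and conclude the biconditional. The main obstacle, as indicated, is step (2): the precise matching of $q$-powers between the plane-wave ratios, the exponential prefactors in Proposition~\ref{cUScase}, and the quasi-invariance shift, which I would handle by writing $\widetilde{\alpha}_{i^*}^\vee=\mu_i^{-1}\cdot 2\widetilde{\alpha}_{i^*}/|\alpha_i|^2$ relations carefully and checking the answer against the known case $\Xi=\Xi_{sph}$ from \eqref{xisph}.
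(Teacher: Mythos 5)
Your plan coincides with the paper's proof, which likewise just substitutes the continuous $q$-ultraspherical expressions of Proposition \ref{cUScase} (equivalently the specialization $(\widetilde{a}_i,\widetilde{b}_i,\widetilde{c}_i,\widetilde{d}_i)=(q^{2\kappa_i},-1,q_iq^{2\kappa_i},-q_i)$) into the definition of $\mathfrak{c}_{\Xi}$, cancels the common nonvanishing theta and plane-wave factors, and reads off \eqref{towardsRiemann}; the paper records no more detail than you do, dismissing the bookkeeping as a straightforward computation, and you correctly avoid the circularity with Proposition \ref{addition}. The only minor imprecision is that the product over $R_0^+$ in $\mathfrak{c}_{\Xi}$ depends on $\xi$ alone, so only the $s_{i^*}$-permutation of $R_0^+\setminus\{\alpha_{i^*}\}$ (not the $s_i$-action on $z$) is relevant to its cancellation, which does not affect the argument.
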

%%%%%%%%%%%%%%%%%%%%%%%%%%%%%%%%%%%%%%%%%%%%%%%%%%%%%%%%%%%%%%%%%%%%%%%%
\begin{proof}
This follows from a straightforward computation using the 
expressions of the connection coefficients $m_{e,e}^{s_i}$
and $m_{s_{i^*},e}^{s_i}$ from Proposition \ref{cUScase} and using
that
\[
(\widetilde{a}_i,\widetilde{b}_i,\widetilde{c}_i,\widetilde{d}_i)=
(q^{2\kappa_i},-1,q_iq^{2\kappa_i},-q_i).
\]
\end{proof}
%%%%%%%%%%%%%%%%%%%%%%%%%%%%%%%%%%%%%%%%%%%%%%%%%%%%%%%%%%%%%%%%%%%%%
Lemma \ref{BBB} 
leads to the root system analog (Proposition \ref{addition}) of the 
addition formula \eqref{Rid} for Jacobi theta functions.
%%%%%%%%%%%%%%%%%%%%%%%%%%%%%%%%%%%%%%%%%%%%%%%%%%%%%%%%%%%%%%%%%%%%%%

\noindent
%%%%%%%%%%%%%%%%%%%%%%%%%%%%%%%%%%%%%%%%%%%%%%%%%%%%%%%%%%%%%%%%%%%%%%%%
{\bf Proofs of Proposition \ref{addition}.}\\
{\bf 1.}
It suffices to show that the equation \eqref{relationsc}
for $\Xi_{sph}$ implies that
\eqref{Ridroot} is valid with $\rho$ replaced by 
$\rho+(\kappa_{2a_0}-\kappa_0)\delta_s^\vee$. 
This follows immediately from the explicit expression \eqref{xisph}
of $\Xi_{sph}$ and the $W_0$-invariance of $\vartheta_\Lambda$ if
$\bigl(\Lambda,\alpha^\vee\bigr)=\mathbb{Z}$ for all $\alpha\in R_0$.
If the condition $\bigl(\Lambda,\alpha^\vee\bigr)=\mathbb{Z}$ 
is not valid for all $\alpha\in R_0$, 
then $\bigl(\Lambda,\alpha^\vee\bigr)=2\mathbb{Z}$
for short roots $\alpha\in R_0$ (see \cite{StSph}).
Hence $\alpha_i$ has to be a long root,
consequently $s_i(\delta_s^\vee)=\delta_s^\vee$. The result now follows
again by the explicit expression \eqref{xisph} of $\Xi_{sph}$ and the
$W_0$-invariance of $\vartheta_\Lambda$.\\
{\bf 2.} This second proof is by direct analytical methods.
Fix generic $\xi\in E_{\mathbb{C}}$ and write $g_\xi(z)$
for the right hand side of \eqref{Ridroot}. It is a holomorphic function
in $z\in E_{\mathbb{C}}$ which vanishes if
\[
\alpha_i(z)=2\kappa_i+\mu_ik+\frac{2\pi\sqrt{-1}}{\tau}m\qquad
(k,m\in\mathbb{Z}).
\]
Hence
\[
f_\xi(z):=\frac{g_\xi(z)}{\theta\bigl(q^{2\kappa_i-\alpha_i(z)};q_i\bigr)}
\]
is holomorphic in $z\in E_{\mathbb{C}}$. 
Let $\Lambda^\vee$ be the dual lattice of $\Lambda$ in $E$ with respect to
the scalar product $\bigl(\cdot,\cdot\bigr)$. By a direct computation
it follows that $f_\xi$ satisfies
\begin{equation*}
\begin{split}
f_\xi(z+\nu)&=f_\xi(z),
\qquad\qquad\qquad\qquad\qquad\,\,
\forall\,\nu\in\frac{2\pi\sqrt{-1}}{\tau}\Lambda^\vee,\\
f_\xi(z+\lambda)&=q^{-|\lambda|^2}q^{-(\lambda,\rho+z+w_0\xi)}f_\xi(z),
\qquad \forall\, \lambda\in\Lambda.
\end{split}
\end{equation*}
This implies that $f_\xi(z)=\textup{cst}_\xi\vartheta_\Lambda\bigl(
\rho+z+w_0\xi\bigr)$ for some constant $\textup{cst}_\xi$. Setting
$z=0$ shows that $\textup{cst}_\xi=
\theta\bigl(q^{\widetilde{\alpha}_{i^*}(\xi)}\bigr)$.
$\qquad\qquad\qquad\qquad\qquad\qquad\qquad\qquad\qquad
\qquad\qquad\,\,\,\Box$
%%%%%%%%%%%%%%%%%%%%%%%%%%%%%%%%%%%%%%%%%%%%%%%%%%%%%%%%%%%%%%%%%%%
\begin{rema}\label{relationRid}
To see how the addition formula \eqref{Rid} for the Jacobi theta
function can be recovered from
\eqref{Ridroot}, take $D=(R_0,\Delta,t,Q,Q)$ 
with the root system $R_0$ of type $B_n$ ($n\geq 2$) realized 
within the Euclidean space $E=\mathbb{R}^n$ with orthonormal basis
$\{\epsilon_i\}_{i=1}^n$ as 
$\{\pm \epsilon_i\pm\epsilon_j\}_{1\leq i<j\leq n}\cup\{\pm\epsilon_i\}_{i=1}^n$.
We take $\Delta_0=(\epsilon_1-\epsilon_2,\ldots,
\epsilon_{n-1}-\epsilon_n,\epsilon_n)$ as ordered basis of $R_0$.
Since $Q=\bigoplus_{i=1}^n\mathbb{Z}\epsilon_i$, the Jacobi triple product
identity implies that
\[
\vartheta_Q(z)=\bigl(q;q\bigr)_{\infty}^n\prod_{j=1}^n
\theta\bigl(-q^{\frac{1}{2}+z_j};q\bigr),
\]
where $z_j:=\epsilon_j(z)$. Then \eqref{Ridroot} for $1\leq i<n$
is easily seen to reduce to \eqref{Rid}.
\end{rema}
%%%%%%%%%%%%%%%%%%%%%%%%%%%%%%%%%%%%%%%%%%%%%%%%%%%%%%%%%%%%%%%%%%%

The following lemma gives yet another reformulation of \eqref{relationsc}.
%%%%%%%%%%%%%%%%%%%%%%%%%%%%%%%%%%%%%%%%%%%%%%%%%%%%%%%%%%%%%%%%%%%%
\begin{lem}
Suppose that $\Xi\in\mathcal{M}$ satisfying
\eqref{quasiinvariance}. Then $c_\Xi\in\mathcal{F}$ satisfies
\eqref{relationscXi} if and only if
\begin{equation*}
\begin{split}
&\frac{\theta\bigl(\widetilde{d}_iq^{\widetilde{\alpha}_{i^*}(\xi)},
d_iq^{-\alpha_i(z)};q_i^2\bigr)}
{\theta\bigl(\frac{\widetilde{d}_i}{a_i}q^{\widetilde{\alpha}_{i^*}(\xi)-
\alpha_i(z)};q_i^2\bigr)}\Xi(z,\xi)-
\frac{\theta\bigl(\widetilde{d}_iq^{\widetilde{\alpha}_{i^*}(\xi)},
d_iq^{\alpha_i(z)};q_i^2\bigr)}
{\theta\bigl(\frac{\widetilde{d}_i}{a_i}q^{\widetilde{\alpha}_{i^*}(\xi)+
\alpha_i(z)};q_i^2\bigr)}\Xi(s_iz,\xi)=\\
&=\frac{\mathfrak{e}_{\alpha_i}(-\alpha_i(z),-\widetilde{\alpha}_{i^*}(\xi))}
{\widetilde{\mathfrak{e}}_{\alpha_i}
(-\widetilde{\alpha}_{i^*}(\xi),-\alpha_i(z))}\\
&\times\left(
\frac{\theta\bigl(\widetilde{d}_iq^{-\widetilde{\alpha}_{i^*}(\xi)},
d_iq^{\alpha_i(z)};q_i^2\bigr)}
{\theta\bigl(\frac{\widetilde{d}_i}{a_i}q^{-\widetilde{\alpha}_{i^*}(\xi)+
\alpha_i(z)};q_i^2\bigr)}\Xi(s_iz,s_{i^*}\xi)-
\frac{\theta\bigl(\widetilde{d}_iq^{\widetilde{\alpha}_{i^*}(\xi)},
d_iq^{\alpha_i(z)};q_i^2\bigr)}
{\theta\bigl(\frac{\widetilde{d}_i}{a_i}q^{\widetilde{\alpha}_{i^*}(\xi)+
\alpha_i(z)};q_i^2\bigr)}\Xi(s_iz,\xi)\right).
\end{split}
\end{equation*}
\end{lem}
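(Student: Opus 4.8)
The plan is to deduce the identity directly from the definitions, the point being that both \eqref{relationscXi} and the asserted identity are $\mathcal{F}$-linear relations among the three functions $\Xi(z,\xi)$, $\Xi(s_iz,\xi)$ and $\Xi(s_iz,s_{i^*}\xi)$, so that the statement amounts to comparing the two sets of coefficients (and it is enough to match them up to one common, not identically zero, meromorphic factor); in particular no theta addition formula in the spirit of \eqref{Rid} is needed here. First I would substitute into \eqref{relationscXi} the explicit formula for $\mathfrak{c}_\Xi$, together with the explicit connection coefficients
\[
m_{e,e}^{s_i}(z,\xi)=\frac{\mathfrak{e}_{\alpha_i}(\alpha_i(z),\widetilde{\alpha}_{i^*}(\xi))-\widetilde{\mathfrak{e}}_{\alpha_i}(\widetilde{\alpha}_{i^*}(\xi),\alpha_i(z))}{\widetilde{\mathfrak{e}}_{\alpha_i}(\widetilde{\alpha}_{i^*}(\xi),-\alpha_i(z))},\qquad m_{s_{i^*},e}^{s_i}(z,\xi)=\frac{\mathfrak{e}_{\alpha_i}(\alpha_i(z),-\widetilde{\alpha}_{i^*}(\xi))}{\widetilde{\mathfrak{e}}_{\alpha_i}(\widetilde{\alpha}_{i^*}(\xi),-\alpha_i(z))}
\]
from Theorem \ref{THM4}, where in the last term of \eqref{relationscXi} one uses $m_{s_{i^*},e}^{s_i}(z,s_{i^*}\xi)$, obtained by the substitution $\widetilde{\alpha}_{i^*}(\xi)\mapsto-\widetilde{\alpha}_{i^*}(\xi)$.

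The computation first reduces to rank one. The product $\prod_{\alpha\in R_0^+}(\cdots)$ occurring in $\mathfrak{c}_\Xi$ is independent of $z$, hence is identical in the $\mathfrak{c}_\Xi(z,\xi)$ and $\mathfrak{c}_\Xi(s_iz,\xi)$ terms. In the $\mathfrak{c}_\Xi(s_iz,s_{i^*}\xi)$ term one uses that $s_{i^*}$ permutes $R_0^+\setminus\{\alpha_{i^*}\}$ and that $\widetilde{\alpha}(s_{i^*}\xi)=(s_{i^*}\widetilde{\alpha})(\xi)$ with $\widetilde{s_{i^*}\alpha}=s_{i^*}\widetilde{\alpha}$, together with the $W_0$-invariance of the AW parameters, to see that the subproduct over $R_0^+\setminus\{\alpha_{i^*}\}$ is again unaffected and only the $\alpha=\alpha_{i^*}$ factor changes, namely $\widetilde{\alpha}_{i^*}(\xi)$ is replaced by $-\widetilde{\alpha}_{i^*}(\xi)$ (note $\widetilde{a}_{i^*}=\widetilde{a}_i$, etc., since $\alpha_{i^*}\in W_0\alpha_i$). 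Dividing the resulting identity by this common subproduct, and by the $\alpha=\alpha_{i^*}$ theta factor $\theta(\widetilde{a}_iq^{\widetilde{\alpha}_{i^*}(\xi)},\widetilde{b}_iq^{\widetilde{\alpha}_{i^*}(\xi)},\widetilde{c}_iq^{\widetilde{\alpha}_{i^*}(\xi)},\widetilde{d}_iq^{\widetilde{\alpha}_{i^*}(\xi)};q_i^2)/\theta(q^{2\widetilde{\alpha}_{i^*}(\xi)};q_i^2)$, and using the plane wave ratios $\mathcal{W}(s_iz,\xi)/\mathcal{W}(z,\xi)=\mathcal{W}_i(-\alpha_i(z),\widetilde{\alpha}_{i^*}(\xi))/\mathcal{W}_i(\alpha_i(z),\widetilde{\alpha}_{i^*}(\xi))$ and $\mathcal{W}(s_iz,\xi)/\mathcal{W}(z,s_{i^*}\xi)=\mathcal{W}_i(-\alpha_i(z),\widetilde{\alpha}_{i^*}(\xi))/\mathcal{W}_i(\alpha_i(z),-\widetilde{\alpha}_{i^*}(\xi))$ from the proof of Proposition \ref{XYX}{\bf (i)}, one is left with a relation among $\Xi(z,\xi),\Xi(s_iz,\xi),\Xi(s_iz,s_{i^*}\xi)$ whose coefficients involve only the one-variable plane waves $\mathcal{W}_i$, a theta quotient in $\widetilde{\alpha}_{i^*}(\xi)$, and the values of $\mathfrak{e}_{\alpha_i}$ and $\widetilde{\mathfrak{e}}_{\alpha_i}$ at the arguments $(\pm\alpha_i(z),\pm\widetilde{\alpha}_{i^*}(\xi))$.

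Finally I would insert the definitions, e.g. $\mathfrak{e}_{\alpha_i}(x,y)=\mathcal{W}_i(x,y)^{-1}\theta(\widetilde{a}_iq^y,\widetilde{b}_iq^y,\widetilde{c}_iq^y,d_iq^{y-x}/\widetilde{a}_i;q_i^2)/\theta(q^{2y},d_iq^{-x};q_i^2)$ and the analogous expression for $\widetilde{\mathfrak{e}}_{\alpha_i}$, clear the remaining denominators, and use the functional equation \eqref{fe} (equivalently, the quasi-periodicity and inversion symmetry of $\theta$) together with the explicit values of $a_i,b_i,c_i,d_i$ and $\widetilde{a}_i,\widetilde{b}_i,\widetilde{c}_i,\widetilde{d}_i$ as powers of $q$ from \eqref{AW} and \eqref{dualAW} to shift the theta arguments. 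The goal is that the coefficients of $\Xi(z,\xi)$ and $\Xi(s_iz,\xi)$ collapse (up to signs and an overall common factor) to the quotients $\theta(\widetilde{d}_iq^{\widetilde{\alpha}_{i^*}(\xi)},d_iq^{\mp\alpha_i(z)};q_i^2)/\theta(\tfrac{\widetilde{d}_i}{a_i}q^{\widetilde{\alpha}_{i^*}(\xi)\mp\alpha_i(z)};q_i^2)$ of the stated identity, while the right-hand side collapses to the stated combination carrying the single prefactor $\mathfrak{e}_{\alpha_i}(-\alpha_i(z),-\widetilde{\alpha}_{i^*}(\xi))/\widetilde{\mathfrak{e}}_{\alpha_i}(-\widetilde{\alpha}_{i^*}(\xi),-\alpha_i(z))$. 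I expect the only real difficulty to be bookkeeping: keeping track of the exponential prefactors $q^{\pm\frac{1}{2\mu_i}(\cdots)(\cdots)}$ carried by $\mathcal{W}_i$ and by $\mathfrak{e}_{\alpha_i},\widetilde{\mathfrak{e}}_{\alpha_i}$, and of the monomials produced by each use of \eqref{fe}, and checking that they cancel exactly. The hypothesis \eqref{quasiinvariance} is needed only to guarantee $\mathfrak{c}_\Xi\in\mathcal{F}$, so that \eqref{relationscXi} is the instance of \eqref{relationsc} with $\mathfrak{c}=\mathfrak{c}_\Xi$; it plays no role in the algebraic manipulation itself.
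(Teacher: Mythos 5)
Your overall framework is right --- the lemma is indeed the assertion that two linear relations among $\Xi(z,\xi)$, $\Xi(s_iz,\xi)$, $\Xi(s_iz,s_{i^*}\xi)$ have proportional coefficient vectors, and your rank-one reduction of the product over $R_0^+$ and of the plane-wave ratios is exactly the right bookkeeping --- but the central claim that ``no theta addition formula in the spirit of \eqref{Rid} is needed'' is where the argument breaks. Carrying out your computation, the coefficient of $\Xi(s_iz,s_{i^*}\xi)$ does collapse as you predict (using, e.g., that $d_{\alpha_i}/\widetilde{a}_{\alpha_i}=\widetilde{d}_{\alpha_i}/a_{\alpha_i}$, which follows from \eqref{AW} and \eqref{dualAW}), but the coefficient of $\Xi(s_iz,\xi)$ does not. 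On the \eqref{relationscXi} side that coefficient is governed by $m_{e,e}^{s_i}(z,\xi)=\bigl(\mathfrak{e}_{\alpha_i}(x,y)-\widetilde{\mathfrak{e}}_{\alpha_i}(y,x)\bigr)/\widetilde{\mathfrak{e}}_{\alpha_i}(y,-x)$ with $x=\alpha_i(z)$, $y=\widetilde{\alpha}_{i^*}(\xi)$, while on the side of the asserted identity it is the theta quotient $\theta(\widetilde{d}_iq^{y},d_iq^{x};q_i^2)/\theta(\tfrac{\widetilde{d}_i}{a_i}q^{y+x};q_i^2)$ times $\bigl(1-\mathfrak{e}_{\alpha_i}(-x,-y)/\widetilde{\mathfrak{e}}_{\alpha_i}(-y,-x)\bigr)$. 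Equating these is a two-term-equals-two-term relation whose terms do not match pairwise (a pairwise match would force an identity between products containing $\theta(\widetilde{a}_iq^{-y},\widetilde{b}_iq^{-y},\widetilde{c}_iq^{-y};q_i^2)$ and products free of $\widetilde{a}_i,\widetilde{b}_i,\widetilde{c}_i$), so it cannot be produced by the quasi-periodicity \eqref{fe} and monomial cancellations alone: it is a genuine four-term theta relation of addition-formula type, reducing to \eqref{Rid} in the continuous $q$-ultraspherical specialization. The paper supplies precisely this missing input as \cite[Cor.~2.8]{StQ}, namely
\[
m_{e,e}^{s_i}(z,\xi)=\frac{\mathfrak{e}_{\alpha_i}(x,y)-\mathfrak{e}_{\alpha_i}(-x,-y)\,m_{s_{i^*},e}^{s_i}(z,s_{i^*}\xi)}{\mathfrak{e}_{\alpha_i}(-x,y)},
\]
an identity encoding the $W_0$-invariance of the rank-one Askey--Wilson function. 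Once this is granted, substituting it into \eqref{relationscXi} and performing your simplifications does yield the stated identity; without it, the coefficient comparison for the $\Xi(s_iz,\xi)$ term fails.
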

%%%%%%%%%%%%%%%%%%%%%%%%%%%%%%%%%%%%%%%%%%%%%%%%%%%%%%%%%%%%%%%%%%%%%%%%%
\begin{proof}
By \cite[Cor. 2.8]{StQ},
\[
m_{e,e}^{s_i}(z,\xi)=
\frac{\mathfrak{e}_{\alpha_i}(\alpha_i(z),\widetilde{\alpha}_{i^*}(\xi))
-\mathfrak{e}_{\alpha_i}(-\alpha_i(z),-\widetilde{\alpha}_{i^*}(\xi))
m_{s_{i^*},e}^{s_i}(z,s_{i^*}\xi)}{\mathfrak{e}_{\alpha_i}(-\alpha_i(z),
\widetilde{\alpha}_{i^*}(\xi))}
\]
(this identity boils down to the $W_0$-invariance 
of the Askey-Wilson function,
which is the basic hypergeometric function associated to 
$D=(R_0,\Delta_0,t,Q,Q)$ with $R_0$ of type $A_1$). Combined with
the explicit expression of the connection coefficient
$m_{s_{i^*},e}^{s_i}$ (see Theorem \ref{THM4}), we conclude that 
\eqref{relationscXi} is equivalent to
\begin{equation*}
\begin{split}
&c_\Xi(z,\xi)-\frac{\mathfrak{e}_{\alpha_i}(\alpha_i(z),
\widetilde{\alpha}_{i^*}(\xi))}
{\mathfrak{e}_{\alpha_i}(-\alpha_i(z),\widetilde{\alpha}_{i^*}(\xi))}
c_\Xi(s_iz,\xi)=\\
&=\frac{\mathfrak{e}_{\alpha_i}(\alpha_i(z),\widetilde{\alpha}_{i^*}(\xi))}
{\widetilde{\mathfrak{e}}_{\alpha_i}(-\widetilde{\alpha}_{i^*}(\xi),
-\alpha_i(z))}\left(
c_\Xi(s_iz,s_{i^*}\xi)-\frac{\mathfrak{e}_{\alpha_i}(-\alpha_i(z),
-\widetilde{\alpha}_{i^*}(\xi))}{\mathfrak{e}_{\alpha_i}(-\alpha_i(z),
\widetilde{\alpha}_{i^*}(\xi))}c_\Xi(s_iz,\xi)\right).
\end{split}
\end{equation*}
Straightforward simplifications now complete the proof of the lemma.
\end{proof}
%%%%%%%%%%%%%%%%%%%%%%%%%%%%%%%%%%%%%%%%%%%%%%%%%%%%%%%%%%%%%%%%%%%%%%%%%%
\begin{cor}
If $\Xi\in\mathcal{M}$ satisfies \eqref{quasiinvariance} and 
\begin{equation}\label{alternative}
\begin{split}
\Xi(s_iz,\xi)&=\frac{\theta\bigl(\frac{\widetilde{d}_i}{a_i}
q^{\widetilde{\alpha}_{i^*}(\xi)+\alpha_i(z)},d_iq^{-\alpha_i(z)};q_i^2\bigr)}
{\theta\bigl(\frac{\widetilde{d}_i}{a_i}q^{\widetilde{\alpha}_{i^*}(\xi)-
\alpha_i(z)},d_iq^{\alpha_i(z)};q_i^2\bigr)}\Xi(z,\xi),\\
\Xi(z,s_{i^*}\xi)&=\frac{\theta\bigl(\frac{\widetilde{d}_i}{a_i}
q^{-\widetilde{\alpha}_{i^*}(\xi)-\alpha_i(z)},\widetilde{d}_i
q^{\widetilde{\alpha}_{i^*}(\xi)};q_i^2\bigr)}
{\theta\bigl(\frac{\widetilde{d}_i}{a_i}q^{\widetilde{\alpha}_{i^*}(\xi)-
\alpha_i(z)},\widetilde{d}_iq^{-\widetilde{\alpha}_{i^*}(\xi)};q_i^2\bigr)}
\Xi(z,\xi),
\end{split}
\end{equation}
then $c_\Xi\in\mathcal{F}$ satisfies \eqref{relationscXi}.
\end{cor}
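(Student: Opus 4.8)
The plan is to invoke the lemma immediately preceding this corollary. Since $\Xi$ satisfies \eqref{quasiinvariance} we have $c_\Xi\in\mathcal{F}$, and that lemma reformulates \eqref{relationscXi} as a single identity relating $\Xi(z,\xi)$, $\Xi(s_iz,\xi)$ and $\Xi(s_iz,s_{i^*}\xi)$. So it suffices to deduce that identity from the two relations \eqref{alternative}; in fact the point is that, under \eqref{alternative}, both sides of the identity vanish identically.

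First I would eliminate $\Xi(s_iz,\xi)$ and $\Xi(s_iz,s_{i^*}\xi)$ in favour of $\Xi(z,\xi)$. The first equation of \eqref{alternative} gives $\Xi(s_iz,\xi)$ directly. For $\Xi(s_iz,s_{i^*}\xi)$ one applies the second equation of \eqref{alternative} with $z$ replaced by $s_iz$ (which flips the sign of $\alpha_i(z)$ and leaves $\widetilde{\alpha}_{i^*}(\xi)$ unchanged) and then substitutes the first equation; applying the two relations in the opposite order produces the same closed form, so the elimination is unambiguous. Writing for brevity $x=\alpha_i(z)$, $y=\widetilde{\alpha}_{i^*}(\xi)$, $A=\widetilde{d}_i/a_i$ and taking all theta functions in base $q_i^2$, one obtains
\[
\Xi(s_iz,\xi)=\frac{\theta(Aq^{y+x},d_iq^{-x})}{\theta(Aq^{y-x},d_iq^{x})}\,\Xi(z,\xi),\qquad
\Xi(s_iz,s_{i^*}\xi)=\frac{\theta(Aq^{x-y},\widetilde{d}_iq^{y},d_iq^{-x})}{\theta(\widetilde{d}_iq^{-y},Aq^{y-x},d_iq^{x})}\,\Xi(z,\xi).
\]

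Substituting these into the lemma's identity, every term becomes an explicit ratio of theta functions times $\Xi(z,\xi)$, and a routine clearing of the common factors $\theta(Aq^{y+x})$ and $\theta(d_iq^{x})$ reduces both the left-hand side and the bracketed factor on the right-hand side to
\[
\frac{\theta(\widetilde{d}_iq^{y},d_iq^{-x})}{\theta(Aq^{y-x})}\,\Xi(z,\xi)-\frac{\theta(\widetilde{d}_iq^{y},d_iq^{-x})}{\theta(Aq^{y-x})}\,\Xi(z,\xi)=0.
\]
Hence the right-hand side of the lemma's identity equals $\mathfrak{e}_{\alpha_i}(-x,-y)/\widetilde{\mathfrak{e}}_{\alpha_i}(-y,-x)$ times $0$, so both sides are identically zero and the identity holds; by the lemma, $c_\Xi\in\mathcal{F}$ satisfies \eqref{relationscXi}. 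I do not expect any real obstacle: the argument uses nothing beyond the previous lemma, and the only step demanding care is the bookkeeping of the theta arguments and signs, in particular verifying that the two orders of applying \eqref{alternative} yield the same expression for $\Xi(s_iz,s_{i^*}\xi)$, which makes the elimination legitimate.
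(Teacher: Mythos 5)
Your proposal is correct and is exactly the argument the paper intends: the corollary is stated without proof as an immediate consequence of the preceding lemma, and your computation — eliminating $\Xi(s_iz,\xi)$ and $\Xi(s_iz,s_{i^*}\xi)$ via \eqref{alternative} and checking that both the left-hand side and the bracketed factor of the lemma's identity collapse to zero — is the intended verification. The consistency check that the two orders of applying \eqref{alternative} give the same closed form for $\Xi(s_iz,s_{i^*}\xi)$ is a sensible precaution and works out as you describe.
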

%%%%%%%%%%%%%%%%%%%%%%%%%%%%%%%%%%%%%%%%%%%%%%%%%%%%%%%%%%%%%%%%%%%%%%%%%
For $D=(R_0,\Delta_0,t,\Lambda,\Lambda)$ and $i\in\{1,\ldots,n\}$
such that $\bigl(\Lambda,\alpha_i^\vee\bigr)=\mathbb{Z}$, 
the solution $\Xi_{sph}$ (see \eqref{xisph}) of \eqref{relationscXi} 
does not satisfy \eqref{alternative}; in that case, one has to resort to
the root system analog \eqref{Ridroot} of the addition formula
for the Jacobi theta function for a
direct proof of \eqref{relationscXi}. On the other hand,
for the Koornwinder root system datum (see Remark \ref{relationRid}), it can
be directly checked that $\Xi_{sph}$ satisfies \eqref{alternative}
for $i=n$ (here we use the conventions from Remark
\ref{relationRid}, i.e. $\alpha_n\in\Delta_0$ is the short simple
root), thus providing a direct proof 
that $\Xi_{sph}$ satisfies \eqref{relationscXi} for $i=n$.

%%%%%%%%%%%%%%%%%%%%%%%%%%%%%%%%%%%%%%%%%%%%%%%%
%%                                            %%
%%                References                  %%
%%                                            %%
%%%%%%%%%%%%%%%%%%%%%%%%%%%%%%%%%%%%%%%%%%%%%%%%


\begin{thebibliography}{99}
\bibitem{AW} R. Askey, J. Wilson, {\it Some basic hypergeometric orthogonal
polynomials that generalize Jacobi polynomials}, Mem. Amer. Math. Soc.
{\bf 54} (1985), no. 319.
\bibitem{AR} J. Avan, E. Ragoucy, 
{\it A new dynamical reflection algebra and related quantum 
integrable systems}, Lett. Math. Phys. {\bf 101} (2012), 85--101.
\bibitem{BFKZ} M.T. Batchelor, V. Fridkin, A. Kuniba, Y.K. Zhou,
{\it Solutions of the reflection equation for face and vertex models
associated with $A_n^{(1)}$, $B_n^{(1)}$, $C_n^{(1)}$, $D_n^{(1)}$ and 
$A_n^{(2)}$}, Phys. Lett. {\bf B376} (1996), 266--274.
\bibitem{BPO} R.E. Behrend, P.A. Pearce, D.L. O'Brien,
{\it Interaction-Round-a-Face models with fixed boundary conditions:
the ABF fusion hierarchy}, J. Stat. Phys. {\bf 84} (1996), 1--48.
\bibitem{Bi} G.D. Birkhoff, {\it The generalized Riemann problem
for linear differential equations and the allied problems for
linear difference and q-difference equations}, Proc. Amer. Acad. {\bf 49}
(1913), 521--568.
\bibitem{ChBi} O. Chalykh, {\it Bispectrality for the quantum Ruijsenaars model and its integrable deformation}, J. Math. Phys. {\bf 41} (2000), no. 8, 5139--5167.
\bibitem{Ch} O. Chalykh, {\it Macdonald polynomials and algebraic 
integrability}, Adv. Math. {\bf 166} (2002), no. 2, 193--259.
\bibitem{ChE} O. Chalykh, P. Etingof, 
{\it Orthogonality relations and Cherednik identities for multivariable
Baker-Akhiezer functions}, Adv. Math., to appear. ArXiv:1111.0515.
\bibitem{CAKZ} I. Cherednik, {\it Affine extensions of Knizhnik-Zamolodchikov
equations and Lusztig's isomorphisms}, in ``Special functions'' (Okayama, 1990),
63--77, ICM-90 Satell. Conf. Proc., Springer, Tokyo, 1991. 
\bibitem{CQKZ} I. Cherednik, {\it Quantum Knizhnik-Zamolodchikov
equations and affine root systems}, Comm. Math. Phys. {\bf 150}
(1992), 109--136.
\bibitem{CQKZ2} I. Cherednik, {\it Double affine Hecke algebras,
Knizhnik-Zamolodchikov equations, and Macdonald's operators},
Int. Math. Res. Not. IMRN {\bf 1992}, no. 9, 171--180.
\bibitem{CInt} I. Cherednik, {\it Integration of quantum many-body problems
by affine Knizhnik-Zamolodchikov equations}, Adv. Math. {\bf 106}
(1994), 65--95.
\bibitem{CInd} I. Cherednik, {\it Induced representations of double
affine Hecke algebras and applications}, Math. Res. Lett. {\bf 1} (1994),
319--337.
\bibitem{Cann} I. Cherednik, {\it Double affine Hecke algebras and
Macdonald's conjectures}, Ann. of Math. (2) {\bf 141}, no. 1 (1995),
191--216.
\bibitem{CMehta} I. Cherednik, {\it Difference Macdonald-Mehta conjecture},
Int. Math. Res. Not. IMRN {\bf 1997}, no. 10, 449--467. 
\bibitem{CB} I. Cherednik, {\it Double affine Hecke algebras},
Cambridge University Press, London Math. Soc. Lecture
Note Series 319, 2005.
\bibitem{CWhit} I. Cherednik, {\it Whittaker limits of difference
spherical functions}, Int. Math. Res. Not. {\bf 2009}, no. 20,
3793--3842.
\bibitem{vDP} J.F. van Diejen, H. Puschmann, {\it Reflectionless 
Schr{\"o}dinger operators, the dynamics of zeros, and the solitonic Sato
formula}, Duke Math. J. {\bf 104} (2000), no. 2, 269--318.
\bibitem{EFK} P.I. Etingof, I.B. Frenkel, A.A. Kirillov Jr.,
{\it Lectures on representation theory and Knizhnik-Zamolodchikov equations},
Mathematical Surveys and Monographs, {\bf 58}. Americal Mathematical
Society, Providence, RI, 1998. 
\bibitem{EK} P.I. Etingof, A.A. Kirillov Jr., {\it Macdonald's polynomials
and representations of quantum groups}, Math. Res. Lett. {\bf 1}
(1994), 279--296.
\bibitem{EV} P. Etingof, A. Varchenko, {\it Solutions of the quantum
dynamical Yang-Baxter equation and dynamical quantum groups},
Comm. Math. Phys. {\bf 196} (1998), 591--640.
\bibitem{Fe} G. Felder, {\it Elliptic quantum groups}, XIth International
Congress of Mathematical Physics (Paris, 1994), 211--218, 
Int. Press, Cambridge, MA, 1995.
\bibitem{Fi} G. Filali, {\it Elliptic dynamical reflection algebra
and partition function of SOS model with reflecting end},
J. Geom. Phys. {\bf 61} (2011), no. 10, 1789--1796.
\bibitem{FR} I. Frenkel, N. Reshetikhin, {\it Quantum affine algebras
and holonomic difference equations}, Comm. Math. Phys. {\bf 146} (1992),
1--60.
\bibitem{GR} G. Gasper, M. Rahman, {\it Basic Hypergeometric Series},
Encycl. Math. and Appl. {\bf 96}, Cambridge University Press 2004
(2nd edition).
\bibitem{GN} J.-L. Gervais, A. Neveu, {\it Novel triangle relation and
absence of tachyons in Liouville string field theory}, Nuclear Phys. B
{\bf 238} (1984), no. 1, 125--141.
\bibitem{Ha} M. Haiman, {\it Cherednik algebras, Macdonald polynomials and
combinatorics}, International Congress of Mathematics. Vol. III,
843--872, Eur. Math. Soc., Z{\"u}rich, 2006.
\bibitem{HS} G. Heckman, H. Schlichtkrull, {\it Harmonic analysis
and special functions on symmetric spaces}, Perspectives in Mathematics,
{\bf 16}. Academic Press, Inc., San Diego, CA, 1994.
\bibitem{HO} G.J. Heckman, E.M. Opdam, {\it
Root systems and hypergeometric functions I}, Compos. Math. {\bf 64}
(1987), no. 3, 329--352.
\bibitem{IR} M. Ismail, M. Rahman, {\it The associated Askey-Wilson
polynomials}, Trans. Amer. Math. Soc. {\bf 328} (1991), 201--237.
\bibitem{JKMO} M. Jimbo, A. Kuniba, T. Miwa, M. Okado, {\it The
$A_n^{(1)}$ face models}, Comm. Math. Phys. {\bf 119} (1988), 543--565.
\bibitem{JM} M. Jimbo, T. Miwa, {\it Algebraic analysis of solvable
lattice models}, CBMS Regional Conference Series in Mathematics,
{\bf 85}, Amer. Math. Soc., Providence, RI, 1995.
\bibitem{JMO} M. Jimbo, T. Miwa, M. Okado, {\it Solvable lattice
models related to the vector representation of classical simple Lie algebras},
Comm. Math. Phys. {\bf 116} (1988), 508--525.
\bibitem{Ka} S. Kato, {\it R-matrix arising from affine Hecke algebras
and its application to Macdonald's difference operators}, Comm. Math. Phys.
{\bf 165} (1994), 533--553.
\bibitem{KK} A. Kazarnovski-Krol, {\it Matrix elements of vertex operators
of the deformed $WA_n$-algebra and the Harish-Chandra solutions to 
Macdonald's difference equations}, Selecta Math. (N.S.) {\bf 5}
(1999), 257--301.
\bibitem{KS0} E. Koelink, J.V. Stokman, {\it Fourier transforms on
the quantum $\textup{SU}(1,1)$ group. With an appendix of M. Rahman},
Publ. Res. Inst. Math. Sci. {\bf 37} (2001), no. 4, 621--715.
\bibitem{Ko} H. Konno, {\it Dynamical $R$-matrices of elliptic quantum
groups and connection matrices for the $q$-KZ equations}, 
SIGMA Symmetry Integrability Geom. Methods Appl. {\bf 2} (2006),
Paper 091, 25 pp.
\bibitem{Koo} T.H. Koornwinder, {\it Askey-Wilson polynomials for root systems
of type BC}, in ``Hypergeometric functions on domains of positivity,
Jack polynomials, and applications'' (Tampa, FL, 1991), 189--204, 
Contemp. Math., {\bf 138}, Amer. Math. Soc., Providence, RI, 1992.
\bibitem{KM} P.P. Kulish, A.I. Mudrov, {\it Dynamical reflection equation},
in: ``Quantum groups'', 281--309,
Contemp. Math., {\bf 433}, Amer. Math. Soc., Providence, RI, 2007.
\bibitem{LS} G. Letzter, J.V. Stokman, {\it Macdonald difference
operators and Harish-Chandra series}, Proc. Lond. Math. Soc. (3) {\bf 97}
(2008), no. 1, 60--96.
\bibitem{Mpol} I.G. Macdonald, {\it Orthogonal polynomials associated
with root systems}, S{\'e}m. Lothar. Combin. {\bf 45} (2000/01),
Art. B45a, 40pp.
\bibitem{M} I.G. Macdonald, {\it Affine Hecke algebras and orthogonal
polynomials}, Cambridge tracts in mathematics, no. 157,
Cambridge University Press (2003).
\bibitem{Mat} A. Matsuo, {\it Integrable connections related to
zonal spherical functions}, Invent. Math. {\bf 110} (1992), 96--121.
\bibitem{vM} M. van Meer, {\it Bispectral quantum Knizhnik-Zamolodchikov
equations for arbitrary root systems}, Selecta Math. (N.S.) {\bf 17}
(2011), no. 1, 183--221.
\bibitem{vMS} M. van Meer, J.V. Stokman, {\it Double affine Hecke
algebras and bispectral quantum Knizhnik-Zamolodchikov equations},
Int. Math. Res. Not. {\bf 2010}, no. 6 (2010), 969--1040.
\bibitem{dM} A. A. de Moura, {\it Elliptic dynamical $R$-matrices from the
monodromy of the $q$-Knizhnik-Zamolodchikov equations for the standard
representation of $U_q(\widetilde{\mathfrak{sl}}_{n+1})$}, Asian J. Math.
{\bf 7} (2003), no. 1, 91--114.
\bibitem{N} M. Noumi, {\it Macdonald-Koornwinder polynomials
and affine Hecke rings}, Surikaisekikenkyusho Kokyuroku
{\bf 919} (1995), 44--55 (in japanese). 
\bibitem{NS} M. Noumi, J. Shiraishi, {\it A direct approach to the
bispectral problem for the Ruijsenaars-Macdonald $q$-difference 
operators}, arXiv:1206.5364. 
\bibitem{NSt} M. Noumi, J.V. Stokman, {\it Askey-Wilson polynomials:
an affine Hecke algebra approach}, in: ``Laredo Lectures on Orthogonal
Polynomials and Special Functions'', 111--144, Adv.
Theory Spec. Funct. Orthogonal Polynomials, Nova Sci. Publ.,
Hauppauge, NY, 2004.
\bibitem{O} E.M. Opdam, {\it Harmonic analysis for certain representations
of graded Hecke algebras}, Acta Math. {\bf 175} (1995), 75--121.
\bibitem{R} S.N.M. Ruijsenaars, {\it Complete integrability of relativistic
Calogero-Moser systems and elliptic function identities}, Comm. Math. Phys.
{\bf 110} (1987), 191--213.
\bibitem{RuR1} S.N.M. Ruijsenaars, {\it Hilbert space theory for reflectionless
relativistic potentials}, Publ. RIMS, Kyoto Univ. {\bf 36} (2000),
707--753.
\bibitem{RuR2} S.N.M. Ruijsenaars, {\it Reflectionless analytic difference
operators I. Algebraic framework},
J. Nonlinear Math. Phys. {\bf 8} (2001), no. 1, 106--138.
\bibitem{Sa} S. Sahi, {\it Nonsymmetric Koornwinder polynomials and duality},
Ann. of Math. (2) {\bf 150} (1999), no. 1, 267--282.
\bibitem{Sm} F. Smirnov, {\it A general formula for soliton form factors
in the quantum sine-Gordon model}, J. Phys. A {\bf 19}, no. 10 (1986),
L575--L578.
\bibitem{Sp} V.P. Spiridonov, {\it Essays on the theory of elliptic
hypergeometric functions}, Russian Math. Surveys {\bf 63} (2008), no. 3,
405--472.
\bibitem{StK} J.V. Stokman, {\it Koornwinder polynomials and
affine Hecke algebras}, Int. Math. Res. Not. IMRN {\bf 2000}, 
no. 19, 1005--1042.
\bibitem{StSM} J.V. Stokman, {\it Difference Fourier transforms for
nonreduced root systems}, Selecta Math. (N.S.) {\bf 9} (2003), no. 3,
409--494.
\bibitem{StI} J.V. Stokman, {\it Quantum affine Knizhnik-Zamolodchikov 
equations and quantum spherical functions, I}, Int. Math. Res. Not.
IMRN {\bf 2011}, no. 5, 1023--1090.
\bibitem{StSph} J.V. Stokman, {\it The c-function expansion of a basic
hypergeometric function associated to root systems}, Ann. of Math. (2)
{\bf 179} (2014), no. 1, 253--299.
\bibitem{StB} J.V. Stokman, {\it Macdonald polynomials}, Askey-Bateman project,
book chapter for volume 5. ``Multivariable special functions'' (volume editor:
T.H. Koornwinder, general editors: M. Ismail, W. van Assche), Cambridge 
University Press.
\bibitem{StQ} J.V. Stokman, {\it Some remarks on very-well-poised
${}_8\phi_7$ series}, SIGMA {\bf 8} (2012), 039, 17 pages.
\bibitem{TV1} V. Tarasov, A. Varchenko, {\it Geometry of $q$-hypergeometric
functions, quantum affine algebras and elliptic quantum groups},
Ast{\'e}risque No. {\bf 246} (1997), vi+135 pp.
\bibitem{WW} E.T. Whittaker, G.N. Watson, {\it A Course of Modern Analysis},
4th edition (1927), Cambridge University Press, Cambridge.
\end{thebibliography}
\end{document}